\newtheorem{Theorem}{Theorem}[section]
\newtheorem{Lemma}[Theorem]{Lemma}
\newtheorem{Sublemma}[Theorem]{Sublemma}
\newtheorem{Corollary}[Theorem]{Corollary}
\newtheorem{Proposition}[Theorem]{Proposition}
\newtheorem{Conjecture}[Theorem]{Conjecture}
\newtheorem{Remark}[Theorem]{Remark}
\def \dim{{\mbox {dim}}\,}
\begin{document}
\title[On the eigenvalues of the Laplacian on fibred manifolds]{On the eigenvalues of the Laplacian on fibred manifolds}

\author[C. Sung]{Chanyoung Sung}
 \address{Dept. of Mathematics Education  \\
          Korea National University of Education \\
          Cheongju, Korea
          }
\email{cysung@kias.re.kr}

\keywords{Laplacian eigenvalue, fiberwise symmetrization, Riemannian submersion}

\subjclass[2020]{58E99, 35P15, 49R05}

\date{}


\begin{abstract}
We prove various comparison theorems of the $i$-th eigenvalue $\lambda_i$ of the Laplacian on fibred Riemannian manifolds by using fiberwise spherical and Euclidean (or hyperbolic) symmetrization. In particular we generalize the Lichnerowicz inequality and the Faber-Krahn inequality to fiber bundles, and prove a counterpart to Cheng's $\lambda_1$ comparison theorem under a lower Ricci curvature bound.

By applying these, it is shown that $\lambda_1,\cdots,\lambda_k$ of a fiber bundle given by a Riemannian submersion with totally geodesic fibers of sufficiently positive Ricci curvature are respectively equal to $\lambda_1,\cdots,\lambda_k$ of its base, and $\lambda_1$ of a (possibly singular) fibration with Euclidean subsets as fibers is no less than $\lambda_1$ of the disk bundle obtained by replacing each fiber with a Euclidean disk of the same dimension and volume.
\end{abstract}

\maketitle

\tableofcontents

\section{Introduction and statement of main results}

Eigenvalues and eigenfunctions of the Laplace-Beltrami operator (or Laplacian) on a Riemannian manifold have been of crucial importance not only in geometry but also in physics and engineering.(\cite{Berard, Courant-Hilbert, Shanker}) They carry important information of a Riemannian manifold which can not be captured by curvature.

The Laplacian $\Delta_{(M,g)}$ of a smooth closed Riemannian $m$-manifold $(M^m ,g)$ is a linear differential operator acting on $C^\infty(M)$ defined by $$\Delta u:=-\textrm{div}(\nabla u)=-\sum_{i=1}^m(V_i\circ V_i-\nabla_{V_i}V_i)(u)$$ for $u\in C^\infty(M)$ where $V_1,\cdots,V_m$ is a local orthonormal frame. A maximal set of its eigenfunctions forms a complete orthonormal basis of a Hilbert space $L^2(M)$, and the $i$-th eigenvalue $\lambda_i(M,g)$ of $\Delta_{(M,g)}$ can be represented as the minimum of the Rayleigh quotient on the $L^2$-orthogonal complement to the subspace generated by eigenfunctions corresponding to $\lambda_0,\cdots,\lambda_{i-1}$, where the Rayleigh quotient $R_g(u)$ of $u\in L_1^2(M)$ is given by $$R_g(u)=\frac{\int_M |du|_g^2\ d\mu_{g}}{\int_M u^2\ d\mu_{g}}$$ and its minimizer is the $i$-th eigenfunction.
We alert the reader that our indexing of eigenvalues allows $\lambda_i$ to be equal to $\lambda_{i+1}$ which is repeated the number of times equal to its multiplicity, and $\lambda_0$ is always 0. In particular $\lambda_1(M,g)$ is represented as
\begin{eqnarray*}
\lambda_1&=&\min\{R_g(u)|u\in L_1^2(M)\backslash\{0\}, \int_Mu\cdot 1\ d\mu_{g}=0\}.
\end{eqnarray*}

Due to this variational characterization, various upper bounds for $\lambda_1$ can be obtained, and more importantly there are also many results about the lower bounds of $\lambda_1$ in terms of topology and geometric quantities such as curvature, injectivity radius, volume, diameter, etc.(\cite{SY}), in which case the use of isoperimetric inequalities are substantially effective. The most well-known theorem of this kind is the Lichnerowicz-Obata theorem \cite{Lich, obata-1962} which states that if $m\geq 2$ and Ricci curvature $Ric_g \geq m-1$, then $$\lambda_1(M^m,g)\geq m=\lambda_1(S^m,g_{_{\Bbb S}})$$ where the equality holds iff $(M,g)$ is isometric to $\Bbb S^m:=(S^m,g_{_{\Bbb S}})$.
Throughout the paper $g_{_{\Bbb S}}$ denotes the standard round metric of constant curvature 1 and $V_m$ denotes the volume of $\Bbb S^m$. There are many exact computations of $\lambda_1$ in homogeneous manifolds \cite{BLP, BP, Tani, Tanno-0, Tanno, urakawa-0, urakawa} in which cases the spectrum can be obtained (in principle) algebraically from its corresponding Lie group, isotropy subgroup, and inner product on its Lie algebra, but in general it's very hard to compute $\lambda_1$ even on well-known manifolds with canonical Riemannian metrics.

We are concerned with the product formula of $\lambda_i$. The case of a Riemannian product is easily obtained by the Hilbert space isomorphism $$L^2(N\times M)\simeq L^2(N)\otimes L^2(M)$$ for any smooth closed Riemannian manifold $(N,h)$. Namely $\lambda_j(N,h)+\lambda_k(M,g)$ constitute eigenvalues of $(N\times M,h+g)$, so in particular
\begin{eqnarray}\label{pizzaplanet-0}
\lambda_1(N\times M,h+g)=\min(\lambda_1(N,h),\lambda_1(M,g)).
\end{eqnarray}
Then one naturally asks about the next canonical one, a warped product or more generally a Riemannian submersion.(\cite{Bergery-Bour, Besson, Ejiri, Gilkey-Park, Tsukada}) It is well-known that for a Riemannian submersion $\pi:X\rightarrow B$ defined on a smooth closed manifold $X$ with minimal fibers, the pullback of any eigenfunction of $B$ by $\pi^*$ is also eigenfunction of $X$ with the same eigenvalue. (Refer to Watson \cite{Watson} and also Berger, Gauduchon, and Mazet \cite{BGM}.)
Thus we have an upper estimate of $\lambda_1$ (in fact all $\lambda_i$) on a Riemannian submersion with totally geodesic fibers. To find a lower estimate, we shall use the method of fiberwise spherical symmetrization developed in \cite{Sung} and obtain a generalized Lichnerowicz inequality on fiber bundles.
\begin{Theorem}\label{Main-theorem}
Let $(F,g_{_F})$ be a smooth closed Riemannian $m$-manifold for $m\geq 2$ with $Ric_{g_{_F}} \geq m-1$, $(B,g_{_B})$ be a smooth closed Riemannian $n$-manifold, and $\check{\rho}$ be a smooth positive function on $B$.
Suppose that $\pi: (X,g_{_X})\rightarrow (B,g_{_B})$ is a smooth Riemannian submersion with totally geodesic fibers isometric to $(F,g_{_F})$, and
$g_{_X}^\rho$ for $\rho=\check{\rho}\circ \pi$ is a warped variation of $g_{_X}$.
Then $$\lambda_1(X,g_{_X}^\rho) \geq  \lambda_1(B\times S^m, g_{_B}+\check{\rho}^2g_{_{\Bbb S}}).$$
\end{Theorem}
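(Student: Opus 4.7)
The plan is to use fiberwise spherical symmetrization to transport a first eigenfunction on $(X,g_{_X}^\rho)$ to a test function on $(B\times S^m,\, g_{_B}+\check{\rho}^2 g_{_{\Bbb S}})$ with no larger Rayleigh quotient. Let $u$ be a first eigenfunction of $\Delta_{g_{_X}^\rho}$, so $\int_X u\, d\mu = 0$ and $R_{g_{_X}^\rho}(u) = \lambda_1(X,g_{_X}^\rho)$. For each $b\in B$, the restriction $u_b := u|_{F_b}$ is a function on the fiber $F_b$, which in $g_{_X}^\rho$ carries the metric $\check{\rho}(b)^2 g_{_F}$. Define $u^*\colon B\times S^m \to \Bbb R$ by letting $u^*(b,\cdot)$ be the nonincreasing spherical rearrangement of $u_b$ on $(S^m,\check{\rho}(b)^2 g_{_{\Bbb S}})$, normalized so that $u_b$ and $u^*(b,\cdot)$ have the same relative distribution with respect to the normalized Riemannian measures of $F$ and $\Bbb S^m$.

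The first ingredient is equimeasurability on each fiber: the integrals $\int_{F_b} u_b^p\, d\mu$ and $\int_{S^m}(u^*(b,\cdot))^p\, d\mu$ agree up to the fiber-volume ratio $V_{_F}/V_m$ for every $p\geq 1$, where $V_{_F}:=\Vol(F,g_{_F})$; integrating over $B$ shows that $u^*$ inherits mean zero on $B\times S^m$ and that its $L^2$-norm is a fixed multiple of that of $u$. The second ingredient is the vertical Dirichlet energy comparison: since $Ric_{g_{_F}}\geq m-1$, the L\'evy--Gromov isoperimetric inequality forces the isoperimetric profile of $F$ to dominate that of $\Bbb S^m$, and the classical P\'olya--Szeg\H{o} argument then yields, for each $b$, $\int_{F_b}|du_b|_{g_{_F}}^2\, d\mu \geq (V_{_F}/V_m)\int_{S^m}|du^*(b,\cdot)|_{g_{_{\Bbb S}}}^2\, d\mu$; warping by $\check{\rho}(b)^2$ and integrating over $B$ carries this through to the vertical-energy inequality on $X$ versus $B\times S^m$ with the same constant as for the $L^2$-norms.

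The third and most delicate ingredient is the horizontal Dirichlet energy comparison $\int_X |d^H u|^2\, d\mu \geq (V_{_F}/V_m)\int_{B\times S^m}|d^H u^*|^2\, d\mu$. Because the fibers are totally geodesic, horizontal parallel transport gives a smooth family of fiber isometries, so the super-level sets $\{u>t\}\subset X$ slice coherently along $B$: the function $b\mapsto \Vol_{g_{_F}}(\{u_b>t\})$ is smooth in $b$, and by a coarea argument its horizontal derivative is controlled by the horizontal gradient flux of $u$ across the level set $\{u_b=t\}\subset F_b$. Combined with the equidistribution definition of $u^*$, this shows that $|d^H u^*|(b,\omega)$ is dominated, in a Cauchy--Schwarz averaged sense, by the horizontal gradient of $u$ on the corresponding level set of $u_b$, whence integration yields the desired inequality. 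This is the fiberwise-symmetrization principle developed in \cite{Sung}.

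Assembling the three ingredients, $u^*$ is a mean-zero test function on $B\times S^m$ satisfying $R_{g_{_B}+\check{\rho}^2 g_{_{\Bbb S}}}(u^*) \leq R_{g_{_X}^\rho}(u) = \lambda_1(X,g_{_X}^\rho)$, so $\lambda_1(B\times S^m,\, g_{_B}+\check{\rho}^2 g_{_{\Bbb S}}) \leq R(u^*) \leq \lambda_1(X,g_{_X}^\rho)$. The main obstacle is the horizontal energy comparison, since spherical rearrangement is a nonlocal operation on each fiber; the totally-geodesic hypothesis is essential here, because otherwise the second fundamental form of the fibers would introduce mixed terms that resist control in the desired direction.
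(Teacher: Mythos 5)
Your overall template (rearrange the eigenfunction fiberwise to $B\times S^m$, show the $L^2$-norm is preserved up to a fixed ratio and the mean is preserved, show the vertical energy drops via L\'evy--Gromov/P\'olya--Szeg\H o, show the horizontal energy drops via coarea) is the right shape, and your vertical and $L^2$ bookkeeping is correct. But the central step -- the horizontal Dirichlet energy comparison on a \emph{nontrivial} bundle -- is exactly where the proposal has a genuine gap, and it is the step the paper devotes most of its machinery to.

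You appeal to ``the fiberwise-symmetrization principle developed in \cite{Sung}'' for the horizontal estimate, but the results of \cite{Sung} (Theorems \ref{morse-1}, \ref{main-estimates}, \ref{Firstlady} here) are stated and proved only for a Riemannian \emph{product} $N\times M$. The paper explicitly flags this: ``our fiberwise symmetrization method was developed only for Riemannian products $\ldots$ which is not directly applicable to the general case of nontrivial bundles.'' Your sketch treats $X$ as if it were $B\times F$ by identifying nearby fibers via horizontal parallel transport, then cites \cite{Sung} for the resulting derivative estimate. That identification is local and path-dependent (the holonomy is a nontrivial isometry of $F$), so the hypotheses of Theorem \ref{main-estimates} are not met as stated, and neither is the regularity theory (the generic-fiberwise Morse perturbation of Theorem \ref{morse-1}, which you omit entirely and which is needed to make $u^*$ land in $L^2_1$ with the required smoothness on a dense set). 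In other words, you are asserting the conclusion of a theorem whose hypotheses you have not verified, on precisely the point the theorem's authors say needs new work.

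The paper's actual route circumvents this by lifting $\varphi$ to $\tilde\pi^*\varphi$ on the genuine product $P\times F$ (where $P$ is the associated principal $G$-bundle with the submersion metric $g_{_P}$), perturbing to a generic-fiberwise Morse function there, rearranging to $P\times S^m_V$, averaging over $G$ to force $G$-invariance (Lemma \ref{grape-heaven}), and projecting down to $B\times S^m_V$. The form $\Omega_G$ and Proposition \ref{Matt-Damon} are the key devices that translate the Rayleigh quotient back and forth between $X$ and $P\times F$ (and between $B\times S^m$ and $P\times S^m$), and Proposition \ref{gamsa} controls the effect of the $G$-averaging on $\int |d(\cdot)\wedge\Omega_G|^2$. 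None of this appears in your proposal, and it is not cosmetic: it is how the product-only estimates of \cite{Sung} are legitimately brought to bear on a nontrivial bundle. If you want to salvage the direct approach, you would have to re-prove the analogues of Theorems \ref{morse-1} and \ref{main-estimates} for a nontrivial fiber bundle with totally geodesic fibers (verifying that the coarea and Cauchy--Schwarz steps survive the parallel-transport identification and that the rearranged function has the needed Sobolev regularity), which is a substantial undertaking you neither carry out nor acknowledge.
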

The above warped variation of a Riemanian submersion refers to the metric obtained by conformally rescaling each fiber by the given function $\rho$, and its detailed account is given in Subsection \ref{Korean-nuclear}. Its typical example is a warped product metric $g_{_B}+\rho^2g_{_F}$ when the original metric $g_{_X}$ is a Riemannian product $g_{_B}+g_{_F}$. In case of a warped product we already obtained the same result in \cite{Sung} by using fiberwise spherical symmetrization which is not directly applicable to the general case of nontrivial bundles.

In another special case when $\rho$ is constant, i.e. $g_{_X}^\rho$ is a canonical variation of $g_{_X}$, we can also compute higher eigenvalues $\lambda_i$ of $X$.
\begin{Theorem}\label{Main-Corollary}
Let $(F,g_{_F}), (B,g_{_B}), (X,g_{_X})$ and $\rho$ be as in Theorem \ref{Main-theorem}. Further assume that $n\geq 1$, $\rho=\textrm{constant}$, and for an integer $I\geq 1$ $$\lambda_I(B,g_{_B})\leq \frac{m}{\rho^2}.$$ Then for $i\leq I$ $$\lambda_i(X,g_{_X}^\rho)=\lambda_i(B, g_{_B})$$ and the pull back of the $i$-th eigenfunction of $(B, g_{_B})$ by $\pi^*$ is the $i$-th eigenfunction of $(X,g_{_X}^\rho)$.
\end{Theorem}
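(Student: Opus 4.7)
The plan is to sandwich $\lambda_i(X,g_{_X}^\rho)$ between $\lambda_i(B,g_{_B})$ on both sides by using an orthogonal decomposition of $L^2(X)$ adapted to the submersion. For the upper bound, since the fibers are totally geodesic and hence minimal, the Watson identity $\Delta_X(f\circ\pi)=(\Delta_B f)\circ\pi$ holds for every smooth $f$ on $B$. Applying it to the first $I+1$ eigenfunctions $\phi_0,\ldots,\phi_I$ of $(B,g_{_B})$ produces $I+1$ mutually $L^2(X)$-orthogonal eigenfunctions of $\Delta_X$ (orthogonality surviving pullback since, with $\rho$ constant, all fibers share a common volume), with eigenvalues $\lambda_0(B),\ldots,\lambda_I(B)$; the min-max principle then gives $\lambda_i(X,g_{_X}^\rho)\leq\lambda_i(B,g_{_B})$ for $i\leq I$.

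For the lower bound I would exploit the orthogonal splitting $L^2(X)=L^2_{hc}(X)\oplus L^2_{fmz}(X)$, where $L^2_{hc}$ is the image of $\pi^*$ and $L^2_{fmz}$ is its orthogonal complement (fiber-mean-zero functions, well defined because of the common fiber volume). The Watson identity shows $L^2_{hc}$ is $\Delta_X$-invariant, and self-adjointness of $\Delta_X$ then forces $L^2_{fmz}$ to be invariant too. It therefore suffices to prove $R_{g_{_X}^\rho}(u)\geq m/\rho^2$ for every nonzero $u\in L^2_{fmz}$. Discarding the horizontal part of the gradient and integrating along the fibers, all of which are isometric to $(F,\rho^2 g_{_F})$,
\[
\int_X |du|_{g_{_X}^\rho}^2\, d\mu_{g_{_X}^\rho} \;\geq\; \int_B\int_{F_b} |du_b|_{\rho^2 g_{_F}}^2\, d\mu_{\rho^2 g_{_F}}\, d\mu_{g_{_B}},
\]
where $u_b:=u|_{F_b}$ has zero mean on $F_b$. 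The Lichnerowicz-Obata bound $\lambda_1(F,g_{_F})\geq m$, rescaled to $\rho^2 g_{_F}$ as $\lambda_1(F,\rho^2 g_{_F})\geq m/\rho^2$, applied fiberwise and then integrated back over $B$, supplies the claimed inequality.

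Combining these two pieces, the spectrum of $\Delta_X$ on $L^2_{hc}$ matches that of $\Delta_B$ with multiplicities, while its spectrum on $L^2_{fmz}$ is contained in $[m/\rho^2,\infty)$. The assumption $\lambda_I(B,g_{_B})\leq m/\rho^2$ then places $\lambda_0(B),\ldots,\lambda_I(B)$ below every eigenvalue of $\Delta_X$ coming from $L^2_{fmz}$, so the first $I+1$ eigenvalues of $(X,g_{_X}^\rho)$ in increasing order are exactly $\lambda_0(B),\ldots,\lambda_I(B)$, realised by the pullbacks $\pi^*\phi_i$ of the corresponding eigenfunctions of $B$. The main technical hurdle is justifying the $\Delta_X$-invariance of $L^2_{fmz}$ together with the fibered coarea identity for the vertical energy; both rest squarely on the totally geodesic, mutually isometric structure of the fibers under a constant canonical variation, so beyond the fiberwise Lichnerowicz input no direct appeal to Theorem \ref{Main-theorem} appears to be needed.
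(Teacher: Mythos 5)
Your argument is correct, and it is a genuinely different (and arguably more transparent) route than the paper's. The paper establishes the upper bound $\lambda_i(X,g_{_X}^\rho)\le\lambda_i(B,g_{_B})$ exactly as you do, via Watson's identity. But for the matching lower bound the paper \emph{re-runs the whole Theorem~\ref{Main-theorem} machinery inductively}: it lifts the $(k{+}1)$-st eigenfunction $\varphi_{k+1}$ of $X$ to $P\times F$, approximates it by a generic-fiberwise Morse function, applies fiberwise spherical rearrangement, averages over $G$, projects to $B\times S^m$, and then has to show (via Propositions~\ref{Sunukjian} and \ref{yam-0}) that the resulting test function is \emph{nearly} orthogonal to the first $k$ eigenfunctions of $B$ so that its Rayleigh quotient can be compared to $\lambda_{k+1}(B\times S^m)$. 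Your proof instead uses the classical spectral splitting for Riemannian submersions with totally geodesic fibers under a canonical variation: $L^2(X)=\pi^*L^2(B)\oplus L^2_{\mathrm{fmz}}(X)$, both summands invariant (the first by Watson, the second since it is spanned by the complementary eigenfunctions), and the fiber-mean-zero spectrum is pushed above $m/\rho^2$ by fiberwise Lichnerowicz--Obata after dropping the horizontal energy. This avoids the symmetrization apparatus entirely, is closer in spirit to Bergery--Bourguignon \cite{Bergery-Bour} (cited but not used in the paper), and in fact yields the stronger structural conclusion that the $X$-spectrum below $m/\rho^2$ coincides, with multiplicity, with the $B$-spectrum below $m/\rho^2$. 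What the paper's route buys in exchange is methodological uniformity: it demonstrates that the fiberwise-symmetrization technique of Theorem~\ref{Main-theorem} alone is enough to recover Theorem~\ref{Main-Corollary}, which is the programmatic point the author wants to make. If you were to write your version up formally, the only places needing a line of care are (i) the passage from ``$L^2_{hc}$ is spanned by $\Delta_X$-eigenfunctions'' to ``so is its orthocomplement'' (a short decomposition-of-eigenspaces argument, not merely ``self-adjointness''), and (ii) the Fubini/coarea identity $\int_X f\,d\mu_{g_{_X}^\rho}=\int_B\bigl(\int_{F_b}f\,d\mu_{\rho^2 g_{_F}}\bigr)d\mu_{g_{_B}}$, which holds here because the submersion has Jacobian one and all fibers are mutually isometric when $\rho$ is constant.
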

The above theorem enables us to exactly compute $\lambda_i$ on many nontrivial fiber bundles with totally geodesic fibers of sufficiently positive Ricci curvature. As shown in Section \ref{Main-Corollary-Example}, typical examples are geometrically ruled surfaces, nontrivial $S^m$ bundles $S^n\tilde{\times} S^m$ over $S^n$ for $m\geq 2$, generalized Berger spheres or squashed spheres, squashed $\Bbb CP^{2q+1}$, the unit sphere bundle of a real vector bundle of rank $\geq 3$, the projectivization of a complex vector bundle of rank $\geq 2$, etc, where metrics are given by Riemannian submersions with totally geodesic fibers sufficiently squashed. Many interesting manifolds belong to the above list. To name a few, some exotic 7-spheres are $S^3$-bundles over $S^4$ and the blow-up of $\Bbb CP^n$ along any linear subspace is realized as a projective bundle over a projective space. To the best of the author's knowledge, this is the first exact computations of $\lambda_i$ on these examples which are not homogeneous manifolds.

For a compact Riemannian manifold with boundary, eigenvalues of the Laplacian can be studied in a similar way to a closed manifold when Dirichlet or Neumann boundary conditions are imposed. We wish to obtain similar comparison theorems of the first Dirichlet eigenvalue, i.e. the least positive eigenvalue $\lambda_1$ for eigenfunctions $u$ satisfying Dirichlet boundary condition $$u|_{\partial X}=0$$ on a compact fibred manifold $X$ with boundary.

The prototypical comparison theorem for this case is the Faber-Krahn inequality \cite{Faber,Krahn} stating that for any bounded domain $X\subset \Bbb R^m$ and an open ball $X^\star\subset \Bbb R^m$ with the same volume as $X$
$$\lambda_1(X)\geq \lambda_1(X^\star).$$
To generalize this to a manifold fibred by bounded domains in $\Bbb R^m$, we perform fiberwise Euclidean symmetrization replacing each fiber with an Euclidean open ball of the same dimension and volume. The necessary analytic technicalities for this symmetrization method were also established in \cite{Sung}, and we follow the notations there. Throughout the paper the open ball(or disc) in $(\Bbb R^m,g_{_{\Bbb E}})$ with volume $v$ and center at the origin is denoted by $D^m_v$.
\begin{Theorem}\label{selfish}
Let $(N^n,h)$ be a smooth closed Riemannian $n$-manifold and $X$ be a bounded domain with piecewise-smooth boundary in the Riemannian product $N\times \Bbb R^m$.
Suppose that the $m$-dimensional volume $V(s)$ of $X\cap (\{s\}\times \Bbb R^m)$ for each $s\in N$ is a piecewise-smooth continuous positive function of $s\in N$.
Then for $$X^\star:=\{(s,q)\in N\times \Bbb R^m|q\in D^m_{V(s)}\}$$
the first Dirichlet eigenvalue satisfies $$\lambda_1(X)\geq \lambda_1(X^\star).$$
\end{Theorem}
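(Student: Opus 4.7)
My plan is to adapt the classical Faber--Krahn proof via Schwarz symmetrization to the fiber-bundle setting, following the analytic framework of \cite{Sung}. Let $u\in H^1_0(X)$ be a first Dirichlet eigenfunction of $X$, chosen nonnegative and normalized with $\int_X u^2\,d\mu=1$, and extend it by zero to $\tilde{u}$ on $N\times\Bbb R^m$; the compactness of $N$ together with the boundedness of $X$ ensures $\tilde{u}$ has compact support. For each $s\in N$ I define $\tilde{u}^\star(s,\cdot):\Bbb R^m\to\Bbb R$ to be the spherically symmetric decreasing rearrangement of $\tilde{u}(s,\cdot)$ on $\Bbb R^m$. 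Since $\tilde{u}(s,\cdot)$ is supported in a set of $m$-volume at most $V(s)$, the support of $\tilde{u}^\star(s,\cdot)$ is contained in $D^m_{V(s)}$, so $\tilde{u}^\star$ is supported in $X^\star$ and, modulo the regularity question discussed below, lies in $H^1_0(X^\star)$.

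Two ingredients then drive the conclusion. Fiberwise equimeasurability of $\tilde{u}(s,\cdot)$ and $\tilde{u}^\star(s,\cdot)$ combined with Fubini immediately gives $\int_{X^\star}(\tilde{u}^\star)^2\,d\mu=\int_X u^2\,d\mu=1$. The second (and main) ingredient is a fiberwise P\'olya--Szeg\H{o} inequality
\begin{equation*}
\int_{X^\star}|\nabla\tilde{u}^\star|^2\,d\mu \;\le\; \int_X|\nabla u|^2\,d\mu.
\end{equation*}
Splitting $\nabla=\nabla_N+\nabla_{\Bbb R^m}$ according to the product structure, the vertical piece is handled by the classical Euclidean P\'olya--Szeg\H{o} inequality applied slice by slice and then integrated in $s\in N$. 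For the horizontal piece I would argue via the co-area formula: the super-level sets $\{\tilde{u}^\star>t\}$ are, by construction, the fiberwise Euclidean symmetrizations of $\{\tilde{u}>t\}$, and on a product cylinder $N\times\Bbb R^m$ such a symmetrization cannot increase perimeter, which via co-area translates into a nonincrease of the horizontal Dirichlet energy.

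With these two ingredients in hand, the variational characterization of the first Dirichlet eigenvalue completes the proof:
\begin{equation*}
\lambda_1(X^\star)\;\le\; \frac{\int_{X^\star}|\nabla\tilde{u}^\star|^2\,d\mu}{\int_{X^\star}(\tilde{u}^\star)^2\,d\mu}\;\le\; \frac{\int_X|\nabla u|^2\,d\mu}{\int_X u^2\,d\mu}\;=\;\lambda_1(X).
\end{equation*}
I expect the main obstacle to lie in a clean justification of the horizontal part of the P\'olya--Szeg\H{o} inequality together with the membership $\tilde{u}^\star\in H^1_0(X^\star)$: the rearrangement operation generally lowers regularity, and $\partial X^\star$ inherits only the piecewise-smooth regularity of the radius function determined by $V(s)$. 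These points should be addressable by first approximating $u$ by smooth compactly supported functions on $X$ and then invoking the fiberwise Euclidean symmetrization machinery set up in \cite{Sung} for precisely this kind of situation, where singularities of the fiber volume are permitted along a piecewise-smooth locus.
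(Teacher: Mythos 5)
Your overall strategy is the paper's: normalize a (near-)first eigenfunction, take its fiberwise Euclidean (Schwarz) rearrangement, and compare Rayleigh quotients. But the specific argument you give for the horizontal part of the P\'olya--Szeg\H{o} inequality does not hold as stated. The co-area formula relates perimeters of super-level sets to the $L^1$-norm of the \emph{full} gradient, and the classical derivation of an $L^2$ P\'olya--Szeg\H{o} inequality from an isoperimetric/perimeter inequality hinges on $|\nabla u^\star|$ being \emph{constant} on each level set of $u^\star$ (that is where the Cauchy--Schwarz step becomes an equality for the symmetrized function). For a fiberwise Schwarz rearrangement the level set $\{u^\star=t\}$ is a union of spheres $\{s\}\times\partial D^m_{\,V(s,t)}$ whose radii vary in $s$, so $|\nabla u^\star|$ is generally not constant there and that derivation collapses. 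Even setting this aside, a perimeter inequality combined with co-area would be a statement about the full gradient, not a way to isolate and bound the horizontal ($\nabla_N$) energy on its own. The paper does not use such an argument: the needed derivative estimates are Theorem~\ref{main-estimates} (separate horizontal $|v(F_{\bar{*}})|^p$ and vertical $|d^SF_{\bar{*}}|^p$ bounds) and its Euclidean analogue Theorem~\ref{Young&Hyuk}, both imported from \cite{Sung}, where they are proved by a dedicated argument rather than a co-area/perimeter one.

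There is a second, smaller gap in your closing remark that approximation plus ``the \cite{Sung} machinery'' should handle the regularity. That machinery is stated for smooth \emph{generic-fiberwise Morse} functions on $N\times M$ with $M$ a smooth \emph{closed} Riemannian manifold, not for an $H^1_0$ function on the noncompact product $N\times\Bbb R^m$. The paper therefore has to (i) replace the eigenfunction by a smooth compactly supported $\varphi$, (ii) pass to the nonpositive mollification $\hat\varphi$ of $-|\varphi|$, (iii) isometrically embed the relevant Euclidean disk into a closed $M$, (iv) perturb to a generic-fiberwise Morse function $\tilde\varphi$, (v) rearrange, and (vi) truncate at level $-\epsilon'/2$ so the result has compact support strictly inside $X^\star$ --- which is what actually makes $\psi\in L^2_{0,1}(X^\star)$, since $\tilde\varphi_{\bar\star}$ itself need not vanish at $\partial X^\star$ --- all while tracking $O(\epsilon)$ errors in the Rayleigh quotient. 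Your sketch acknowledges the regularity issue in spirit, but these steps (especially the truncation that secures the Dirichlet boundary condition and the compactification of the fiber) are the substance of the proof and are not present in your outline.
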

Here $X^\star$ is called the fiberwise Euclidean symmetrization of $X$ and by the piecewise-smoothness of the function $V$ it is meant that $X^\star$ has the corresponding piecewise-smooth boundary so that its first Dirichlet eigenvalue is represented as the minimum of the Rayleigh quotient on $L_{0,1}^2(X^\star)\backslash\{0\}$ where $L_{0,1}^2(X^\star)$ is the completion of $C_c^\infty(X^\star)$ with respect to an $L_1^2$-norm.

As will be shown in Subsection \ref{selfish-man}, Theorem \ref{selfish} can be applied to many interesting manifolds $X$ with a singular fibration onto $N$, where fibers need not be all diffeomorphic but change topology.  For example $X$ can be a 3-manifold like a doughnut made of Swiss cheese and the corresponding $X^\star$ is a solid torus with varying cross-sectional area.

The analogous statement also holds for the hyperbolic symmetrization of $X\subset (N\times \Bbb H^m,h+g_{_{\Bbb H}})$
where $\Bbb H^m$ is the $m$-dimensional open unit ball with center at the origin and Poincar\'{e} metric $g_{_{\Bbb H}}$.
Similarly for $X\subset N\times M$ where $M$ has positive Ricci curvature, we have :
\begin{Theorem}\label{Kane}
Let $(N^n,h)$ and $(M^m,g)$ be smooth closed Riemannian manifolds where $M$ has volume $V$ and $Ric_g\geq m-1$ for $m\geq 2$.
Suppose that $X$ is a domain with piecewise-smooth boundary in the Riemannian product $N\times M$ such that the $m$-dimensional volume $V(s)\in (0,V)$ of $X\cap (\{s\}\times M)$ for each $s\in N$ is a piecewise-smooth continuous function of $s\in N$.
Then for $$X^*:=\{(s,q)\in N\times \Bbb S^m|r_{_{\Bbb S}}(q)< R(s)\}$$ where $r_{_{\Bbb S}}(q)$ is the distance of $q$ and the south pole in $\Bbb S^m$ and $R(s)$ is defined by the condition that the volume of a geodesic ball of radius $R(s)$ in $\Bbb S^m$ is $\frac{V(s)}{V}V_m$,
the first Dirichlet eigenvalue satisfies $$\lambda_1(X)\geq \lambda_1(X^*).$$
\end{Theorem}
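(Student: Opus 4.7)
The plan is to take an admissible test function $u\geq 0$ on $X$ (in particular a first Dirichlet eigenfunction) and replace it by its fiberwise spherical symmetrization $u^*$ on $X^*$, then show that the Rayleigh quotient does not increase. This is the same template used for Theorem~\ref{selfish} and in \cite{Sung}, with the classical Euclidean isoperimetric inequality replaced by the L\'{e}vy--Gromov inequality on $(M,g)$: the hypothesis $Ric_g\geq m-1$ implies that every smooth $\Omega\subset M$ satisfies
$$\frac{|\partial\Omega|_g}{V}\,\geq\,\frac{|\partial\Omega^{\diamond}|_{_{\Bbb S}}}{V_m},$$
where $\Omega^{\diamond}\subset\Bbb S^m$ is the geodesic ball about the south pole with $|\Omega^{\diamond}|_{_{\Bbb S}}=\tfrac{V_m}{V}|\Omega|_g$. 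This is the sole new geometric input.

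For each $s\in N$ set $\mu_s(t):=|\{q\in M: u(s,q)>t\}|_g$ and define $u^*(s,\cdot)$ on $\Bbb S^m$ to be the unique non-increasing function of $r_{_{\Bbb S}}$ whose superlevel set $\{u^*(s,\cdot)>t\}$ is the geodesic ball of volume $\tfrac{V_m}{V}\mu_s(t)$ about the south pole. Because $\mu_s(0)=V(s)$, the support of $u^*$ lies in $X^*$, and the layer-cake formula gives $\int_{X^*}(u^*)^2\,d(\mu_{_{\Bbb S}}\times\mu_h)=\tfrac{V_m}{V}\int_X u^2\,d(\mu_g\times\mu_h)$. I decompose $|\nabla u|^2=|\nabla^N u|^2+|\nabla^M u|^2$ (product metric) and compare the two pieces separately. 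For the vertical piece, the co-area formula $-\mu_s'(t)=\int_{\{u(s,\cdot)=t\}}|\nabla^M u|^{-1}\,d\sigma$, combined with Cauchy--Schwarz and L\'{e}vy--Gromov applied to $\Omega=\{u(s,\cdot)>t\}$, yields the slicewise inequality
$$\int_{\{u(s,\cdot)=t\}}|\nabla^M u|\,d\sigma\,\geq\,\tfrac{V}{V_m}\int_{\{u^*(s,\cdot)=t\}}|\nabla^{\Bbb S}u^*|\,d\sigma;$$
integrating in $t$ and then in $s$ gives $\int_X|\nabla^M u|^2\,d(\mu_g\times\mu_h)\geq\tfrac{V}{V_m}\int_{X^*}|\nabla^{\Bbb S}u^*|^2\,d(\mu_{_{\Bbb S}}\times\mu_h)$.

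For the horizontal piece I parametrize by the rearrangement variable: let $\psi(s,w)$ on $N\times[0,V]$ be the decreasing rearrangement of $u(s,\cdot)$, so that $u^*(s,q)=\psi(s,\tfrac{V}{V_m}|B_{_{\Bbb S}}(S,r_{_{\Bbb S}}(q))|_{_{\Bbb S}})$. A radial change of variables on $\Bbb S^m$ converts the base Dirichlet energy of $u^*$ into $\tfrac{V_m}{V}\int_N\!\int_0^{V(s)}|\nabla^N_s\psi|_h^2\,dw\,d\mu_h$, and the classical $L^2$-contractivity of decreasing rearrangement, applied to difference quotients along a base direction, furnishes the slicewise bound $\int_0^V|\nabla^N_s\psi(s,\cdot)|_h^2\,dw\leq\int_M|\nabla^N u(s,\cdot)|_h^2\,d\mu_g$. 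Adding the vertical and horizontal estimates and dividing by the $L^2$ identity gives $R_{h+g}(u)\geq R_{h+g_{_{\Bbb S}}}(u^*)\geq\lambda_1(X^*)$; infimizing over $u$ completes the proof. The main obstacle is making the horizontal contraction rigorous, since $\psi$ depends non-smoothly on $s$ through the distribution function $\mu_s$; I would address this with the mollification and approximation framework already set up in \cite{Sung} for Theorem~\ref{selfish} and its hyperbolic analogue.
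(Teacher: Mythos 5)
Your proposal is correct and follows essentially the same approach as the paper: fiberwise spherical symmetrization, with the L\'{e}vy--Gromov isoperimetric inequality as the geometric input and the technical regularization (generic-fiberwise Morse approximation, cut-off to get compact support in $X^*$, mollification in the base variable) deferred to the framework of \cite{Sung} and the proof of Theorem~\ref{selfish}. Where the paper packages the vertical and horizontal gradient estimates through Theorem~\ref{main-estimates} and then removes the resulting scale factors via the auxiliary rescaled space $X^\dagger$, you unpack the same two estimates explicitly (co-area plus Cauchy--Schwarz plus isoperimetric for the fiber piece, $L^2$-contractivity of decreasing rearrangement for the base piece) and check directly that the factors $V_m/V$ cancel in the Rayleigh quotient, which is a cosmetic rather than substantive difference.
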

An immediate consequence of this theorem is a kind of counterpart to Cheng's eigenvalue comparison theorem \cite{Cheng} stating that
$$\lambda_1(B_r^M)\leq \lambda_1(B_r^{\Bbb S})$$ where $B_r^M$ and $B_r^{\Bbb S}$ respectively denote a geodesic ball of radius $r$ in $(M^m,g)$ and $\Bbb S^m$ given as above. Precisely we have the following corollary where $\textrm{Vol}(\cdot)$ denotes volume.
\begin{Corollary}
Let $(N^n,h)$ and $(M^m,g)$ be as in Theorem \ref{Kane}. If $r<\pi$ and $r'$ satisfy $$\frac{\textrm{Vol}(B_{r'}^M)}{V}=\frac{\textrm{Vol}(B_{r}^{\Bbb S})}{V_m},$$ then
the first Dirichlet eigenvalues satisfy $$\lambda_1(N\times B_{r'}^M)\geq \lambda_1(N\times B_r^{\Bbb S}).$$
\end{Corollary}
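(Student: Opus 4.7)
The plan is to apply Theorem \ref{Kane} directly to the product domain $X := N \times B_{r'}^M \subset N \times M$. For every $s \in N$ the slice $X \cap (\{s\} \times M) = B_{r'}^M$ has constant $m$-dimensional volume $V(s) \equiv \textrm{Vol}(B_{r'}^M)$, which is trivially a piecewise-smooth continuous function of $s$. The assumption $r < \pi$ gives $\textrm{Vol}(B_r^{\mathbb{S}}) < V_m$, hence by the defining relation between $r$ and $r'$ one has $\textrm{Vol}(B_{r'}^M) = (V/V_m)\,\textrm{Vol}(B_r^{\mathbb{S}}) < V$, so the required condition $V(s) \in (0,V)$ holds. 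Piecewise-smoothness of $\partial X$ reduces to piecewise-smoothness of the geodesic sphere $\partial B_{r'}^M$, which holds generically since that sphere is smooth away from the cut locus of its center.

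Next I would compute the fiberwise spherical symmetrization $X^*$. The radius $R(s)$ is determined by $\textrm{Vol}(B_{R(s)}^{\mathbb{S}}) = (V(s)/V)\,V_m$, and the hypothesis on $r'$ and $r$ forces $R(s) \equiv r$ for every $s \in N$. Consequently
\[
X^* \;=\; \{(s,q) \in N \times \mathbb{S}^m : r_{\mathbb{S}}(q) < R(s)\} \;=\; N \times B_r^{\mathbb{S}},
\]
and Theorem \ref{Kane} yields $\lambda_1(N \times B_{r'}^M) = \lambda_1(X) \geq \lambda_1(X^*) = \lambda_1(N \times B_r^{\mathbb{S}})$, which is the asserted inequality.

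The only conceivable obstacle is the mild regularity question of whether $N \times B_{r'}^M$ really qualifies as a domain with piecewise-smooth boundary in the precise sense demanded by Theorem \ref{Kane}. If the cut locus of the center of $B_{r'}^M$ meets $\partial B_{r'}^M$ in a set that is too irregular, one may have to approximate $r'$ from below by radii at which the geodesic sphere is piecewise smooth, invoke Theorem \ref{Kane} at each such radius, and then pass to the limit using monotonicity and continuity of the first Dirichlet eigenvalue under domain exhaustion. Otherwise the corollary is an immediate specialization of Theorem \ref{Kane} to a constant fiberwise volume profile.
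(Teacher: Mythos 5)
Your proof is correct and is precisely the intended specialization: take $X = N \times B_{r'}^M$, observe that the constant fiber-volume profile $V(s) \equiv \textrm{Vol}(B_{r'}^M) \in (0,V)$ forces $R(s) \equiv r$, so $X^* = N \times B_r^{\Bbb S}$, and invoke Theorem~\ref{Kane}. The paper treats this corollary as an immediate consequence of Theorem~\ref{Kane} and gives no separate proof, so your argument matches the paper's approach; the remark on approximating $r'$ to sidestep cut-locus irregularities is a sensible extra precaution but not essential to the intended statement.
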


It seems that the fiberwise symmetrization method is applicable to the estimation of eigenvalues of the Laplacian on some other fiber bundles too. We shall discuss some open problems in the last section. Our method can be also applied to the estimation of lower bounds of Yamabe constants of fiber bundles. The case of a warped product metric on a product manifold with fiber of positive Ricci curvature was done in \cite{Sung-Yamabe}.

\section{Preliminaries}

\subsection{Conventions and notations}

Here are collected some common conventions and notations to be used throughout this paper.

First of all, every manifold is assumed to be smooth and connected unless otherwise stated.
When $X$ is a $k$-dimensional Riemannian manifold (possibly with boundary), $\mu(X)$ denotes the $k$-dimensional volume of $X$. The characteristic function of a subset $S$ is denoted by $\chi_{_S}$.

In case that a Lie group $G$ has a smooth left (or right) action on a smooth manifold $X$, it induces a left (or right) action on $C^\infty(X)$ respectively defined by
\begin{eqnarray}\label{BHCP}
(g\cdot F)(x):=F(g^{-1}\cdot x)\ \ \ \ \textrm{or}\ \ \ \ (F\cdot g)(x):=F(x\cdot g^{-1})
\end{eqnarray}
for $F\in C^\infty(X)$, $g\in G$ and $x\in X$. We also let $|G|$ denote the cardinality of $G$ for finite $G$ and if $G$ is infinite, it denotes the volume $\mu(G)$ of $G$ with respect to a Riemannian metric under consideration.

Given a map $f$, its inverse image of a set $S$ is denoted by $f^{-1}S$, and
the open ball of radius $\epsilon>0$ and center $p$ in any metric space is denoted by $B_\epsilon(p)$.

\subsection{Weak derivative and Sobolev space}
On a smooth Riemannian manifold $(M,g)$, $L_1^p(M)$ for $p\geq 1$ is the completion of $$\{f\in C^\infty(M)| \ ||f||_{L_1^p}:=(\int_M (|f|^p+|df|_g^p)\ d\mu_g)^{\frac{1}{p}}<\infty\}$$ with respect to $L_1^p$-norm $||\cdot||_{L_1^p}$.
If $M$ is compact, then $L_1^p(M)$ is not only independent of choice of metric $g$ but also characterized as the space of weakly differentiable functions on $M$ with bounded $L_1^p$-norm. A locally integrable function $f:M\rightarrow \Bbb R$, i.e. in $L^1_{loc}(M)$, is called weakly differentiable if it has all of its weak partial derivatives of 1st order in a local coordinate. Here a weak derivative is meant in distributional sense, and required to be in $L^1_{loc}$. If a weak derivative exists, then it is unique up to a set of measure zero. The following characterization for weak differentiability is well-known.
\begin{Proposition}\label{Sewoong}
Let $\Omega\subset \Bbb R^n$ be a domain. Then $f\in L^1_{loc}(\Omega)$ is weakly differentiable iff it is equivalent to a function that is absolutely continuous on almost every straight line
parallel to the coordinate axes and whose 1st order partial derivatives (which consequently exist a.e. and are measurable) are locally integrable in $\Omega$.
\end{Proposition}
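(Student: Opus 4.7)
The plan is to prove both directions via mollification together with Fubini's theorem, which is the classical route to what is often called the \emph{ACL characterization} of weakly differentiable functions.

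For the forward implication, suppose $f\in L^1_{loc}(\Omega)$ has weak partial derivatives $D_i f\in L^1_{loc}(\Omega)$. First I would mollify on any relatively compact subdomain: set $f_\epsilon := f * \eta_\epsilon$ with a standard symmetric mollifier, so $f_\epsilon\in C^\infty$, $\partial_i f_\epsilon = (D_i f)*\eta_\epsilon$, and both $f_\epsilon\to f$ and $\partial_i f_\epsilon\to D_i f$ in $L^1_{loc}$. Choose a sequence $\epsilon_k\downarrow 0$. By Fubini, for each coordinate direction $e_i$ and a full-measure family of lines $\ell$ parallel to $e_i$, the restrictions $f_{\epsilon_k}|_\ell$ and $\partial_i f_{\epsilon_k}|_\ell$ converge in $L^1$ on compact subintervals; passing to a diagonal subsequence, I can additionally arrange pointwise a.e.\ convergence on such lines. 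For the smooth approximants the fundamental theorem of calculus gives
\[
f_{\epsilon_k}(x+te_i)-f_{\epsilon_k}(x)=\int_0^t \partial_i f_{\epsilon_k}(x+se_i)\,ds
\]
on every such line, and taking $k\to\infty$ produces a representative $\tilde f$ of $f$ satisfying the same identity with $\partial_i f_{\epsilon_k}$ replaced by $D_i f$. Since the right-hand side is an indefinite integral of an $L^1$ function, $\tilde f$ is absolutely continuous on a.e.\ line parallel to $e_i$ and its classical partial derivative agrees a.e.\ with $D_i f$.

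For the reverse implication, suppose $f$ has a representative $\tilde f$ that is AC on a.e.\ coordinate line, whose pointwise partial derivatives $D_i\tilde f$ (which exist a.e.\ and are measurable) lie in $L^1_{loc}(\Omega)$. I would verify the defining identity of the weak derivative,
\[
\int_\Omega f\,\partial_i \varphi\,dx = -\int_\Omega (D_i\tilde f)\,\varphi\,dx,
\]
for every $\varphi\in C_c^\infty(\Omega)$. Writing the $n$-dimensional integral as an iterated integral with the inner integration along the $e_i$-direction, Fubini reduces the claim to the one-dimensional integration-by-parts formula on a.e.\ line $\ell$, which is valid because $\tilde f$ is AC on $\ell$, $\varphi$ is smooth with compact support in $\ell$, and the product of an AC function with a $C^1$ function is AC with the usual product rule. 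This identifies $D_i\tilde f$ with the weak derivative of $f$.

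The main technical obstacle is the null-set bookkeeping in the forward direction: one must combine the $L^1_{loc}$ convergence of mollifications with Fubini and a diagonal subsequence so that the pointwise identity holds on a common full-measure collection of lines in every coordinate direction, and one must argue that the resulting representative $\tilde f$ is well defined independent of direction modulo a null set. Once that careful measure-theoretic handling is done, absolute continuity along a.e.\ line is automatic from the representation by an indefinite integral, and the reverse direction is a clean application of Fubini plus the classical one-variable integration-by-parts formula for AC functions.
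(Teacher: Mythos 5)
The paper does not prove this proposition; it is quoted as a well-known fact (the ACL characterization of weakly differentiable functions, cf.\ Gilbarg--Trudinger Thm.\ 7.13, Evans--Gariepy Thm.\ 4.21, or Maz'ya's book) with no argument supplied. Your sketch is the standard proof of that classical result: mollification plus Fubini for the forward direction and Fubini plus one-variable integration by parts for the converse, and it is essentially correct as an outline. You also correctly isolate the one genuinely delicate point, namely producing a \emph{single} representative $\tilde f$ that is absolutely continuous on a.e.\ line in \emph{every} coordinate direction simultaneously. The usual resolution, which would make your sketch complete, is to define $\tilde f(x):=\lim_{k\to\infty}f_{\epsilon_k}(x)$ along a good diagonal subsequence $\epsilon_k\downarrow 0$ (a definition that makes no reference to a direction); then on a.e.\ line $\ell$ parallel to $e_i$ one has $\partial_i f_{\epsilon_k}|_\ell\to D_if|_\ell$ in $L^1_{\mathrm{loc}}(\ell)$, so the fundamental-theorem identity forces $f_{\epsilon_k}|_\ell$ to converge uniformly on compacts once it converges at a single point of $\ell$, which shows $\tilde f|_\ell$ is defined on all of $\ell$, is AC there, and agrees with $f$ a.e.\ on $\ell$. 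With that fix written out, the proof is sound; since the paper supplies none, there is no divergence of approach to report.
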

As a result, the weak 1st order partial derivatives of a weakly differentiable function is equivalent to its ordinary partial derivatives defined almost everywhere.
The following is a well-known elementary fact.
\begin{Lemma}\label{Zenith}
On a smooth Riemannian $m$-manifold $(M,g)$, let $f:M\rightarrow \Bbb R$ be weakly differentiable. Then $f^+(x):=\max (f(x),0)$ and $|f|$ are weakly differentiable such that
$$df^+=\left\{
  \begin{array}{ll}
    df & \hbox{on $\{f > 0\}$} \\
    0 & \hbox{on $\{f \leq 0\}$}
  \end{array}
\right.$$
$$d|f|=\left\{
  \begin{array}{ll}
    df & \hbox{on $\{f > 0\}$} \\
    0 & \hbox{on $\{f = 0\}$} \\
    -df & \hbox{on $\{f < 0\}$}
  \end{array}
\right.$$
where equalities are meant in almost-everywhere sense.

If $\varphi, \psi\in L_1^2(M)$, then
$$\int_M (\varphi^2-\psi^2)\   d\mu_g\leq ||\varphi-\psi||_{L_1^2}(||\varphi||_{L_1^2}+||\psi||_{L_1^2})$$
and
$$\int_M (|d\varphi|^2-|d\psi|^2)\   d\mu_g\leq ||\varphi-\psi||_{L_1^2}(||\varphi||_{L_1^2}+||\psi||_{L_1^2}).$$
\end{Lemma}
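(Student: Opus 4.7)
The plan is to treat the two parts of the lemma separately. The derivative formulas for $f^+$ and $|f|$ are a local statement that I would reduce via Proposition~\ref{Sewoong} to a classical one-variable observation, while the two $L_1^2$-difference inequalities follow from elementary factorization together with Cauchy--Schwarz.

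For the derivative formulas I would work in a coordinate chart $\Omega \subset \mathbb{R}^m$. By Proposition~\ref{Sewoong}, pick a representative of $f$ that is absolutely continuous on almost every straight line parallel to each coordinate axis, with ordinary partial derivatives in $L^1_{loc}$. Since $t \mapsto t^+$ and $t \mapsto |t|$ are both $1$-Lipschitz, their post-composition with such an absolutely continuous one-variable function is again absolutely continuous on the same line. The ordinary one-variable chain rule then gives, almost everywhere, the value $\partial_i f$ on $\{f > 0\}$ for both $f^+$ and $|f|$, the value $0$ on $\{f < 0\}$ for $f^+$ and $-\partial_i f$ on $\{f < 0\}$ for $|f|$, and the value $0$ on $\{f = 0\}$ because the ordinary derivative of an absolutely continuous one-variable function vanishes almost everywhere on its zero set (combining Lebesgue density with almost-everywhere differentiability). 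These partial derivatives lie in $L^1_{loc}$, so applying Proposition~\ref{Sewoong} in reverse yields weak differentiability of $f^+$ and $|f|$ with the stated formulas; the local assertions globalize since weak differentiability and the identification of weak derivatives are local properties.

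For the two integral inequalities I would simply factor. Using $\varphi^2 - \psi^2 = (\varphi - \psi)(\varphi + \psi)$ together with the bound $\int fg \leq \int |fg|$ and the Cauchy--Schwarz inequality,
\[
\int_M (\varphi^2 - \psi^2)\, d\mu_g \;\leq\; \|\varphi - \psi\|_{L^2}\,\|\varphi + \psi\|_{L^2} \;\leq\; \|\varphi - \psi\|_{L_1^2}\bigl(\|\varphi\|_{L_1^2} + \|\psi\|_{L_1^2}\bigr),
\]
where the final step uses the triangle inequality together with $\|\cdot\|_{L^2} \leq \|\cdot\|_{L_1^2}$. For the gradient inequality, the same argument applied to $|d\varphi|^2 - |d\psi|^2 = (|d\varphi| - |d\psi|)(|d\varphi| + |d\psi|)$, together with the pointwise reverse triangle inequality $\bigl||d\varphi| - |d\psi|\bigr| \leq |d(\varphi - \psi)|$, delivers the claim.

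The only genuinely nontrivial ingredient is the classical Stampacchia-type fact that $df = 0$ almost everywhere on $\{f = 0\}$ for a weakly differentiable $f$; this is precisely what makes the piecewise definitions of $df^+$ (where $\{f = 0\}$ is absorbed into $\{f \leq 0\}$) and of $d|f|$ (where $\{f = 0\}$ forms its own case) consistent at the common boundary $\{f = 0\}$. I expect this to be the main, though still routine, step of the argument; everything else is either a direct appeal to Proposition~\ref{Sewoong} or an elementary Hilbert-space manipulation.
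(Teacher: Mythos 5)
Your proposal is correct, and both halves land, but you handle the first half quite differently from the paper. For the derivative formulas the paper simply observes that the claim is local and cites Lemma 7.6 of Gilbarg--Trudinger, whereas you reconstruct the argument from scratch: reduce to a coordinate chart via Proposition~\ref{Sewoong}, use that a $1$-Lipschitz post-composition preserves absolute continuity on lines, apply the one-variable chain rule off the zero set, and supply the Stampacchia-type vanishing of the derivative almost everywhere on $\{f=0\}$ to glue the pieces. You correctly flag that last fact as the only non-routine ingredient; it is exactly what makes the two stated piecewise formulas consistent on $\{f=0\}$ and is the heart of the GT lemma as well, so your route makes the cited black box transparent at the cost of some length. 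For the two integral inequalities your derivation is essentially the paper's: the paper factors $\|\varphi\|_{L^2}^2-\|\psi\|_{L^2}^2$ as a difference of squares of scalars and applies the triangle inequality, while you factor $\varphi^2-\psi^2$ pointwise and apply Cauchy--Schwarz; both rely on $\bigl||d\varphi|-|d\psi|\bigr|\le|d(\varphi-\psi)|$ and $\|\cdot\|_{L^2}\le\|\cdot\|_{L_1^2}$, so the difference is cosmetic.
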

\begin{proof}
To prove the 1st statement, it's enough to check it locally, in which case the proof is given in Lemma 7.6 of \cite{GT}.

The 2nd statement is very elementary, as seen in
\begin{eqnarray*}
\int_M (\varphi^2-\psi^2)\   d\mu_g &=& (||\varphi||_{L^2}-||\psi||_{L^2})(||\varphi||_{L^2}+||\psi||_{L^2})\\
&\leq& ||\varphi-\psi||_{L^2}(||\varphi||_{L_1^2}+||\psi||_{L_1^2})\\
&\leq& ||\varphi-\psi||_{L_1^2}(||\varphi||_{L_1^2}+||\psi||_{L_1^2})
\end{eqnarray*}
and
\begin{eqnarray*}
\int_M (|d\varphi|^2-|d\psi|^2)\   d\mu_g &=& (||\ |d\varphi|\ ||_{L^2}-||\ |d\psi|\ ||_{L^2})(||\ |d\varphi|\ ||_{L^2}+||\ |d\psi|\ ||_{L^2})\\
&\leq& (||\ |d\varphi|-|d\psi|\ ||_{L^2})(||\varphi||_{L_1^2}+||\psi||_{L_1^2})\\
&\leq& (||\ |d\varphi-d\psi|\ ||_{L^2})(||\varphi||_{L_1^2}+||\psi||_{L_1^2})\\
&\leq& ||\varphi-\psi||_{L_1^2}(||\varphi||_{L_1^2}+||\psi||_{L_1^2})
\end{eqnarray*}
where the 2nd inequality is due to the triangle inequality in $\Bbb R^m$.
\end{proof}

\begin{Proposition}\label{gamsa1}
Let $H$ be a compact Lie group with a smooth Riemannian metric and $(M,g)$ be a smooth closed Riemannian $m$-manifold. Suppose that $H$ has a smooth isometric left action on $M$. For $f\in L_1^2(M)$ define $$f_\circ(x):=\frac{\int_{H} (h\cdot f)(x)\ d\mu(h)}{|H|}$$ where $d\mu(h)$ denotes the volume element at $h\in H$.
Then $f_\circ\in L_1^2(M)$ satisfying $$||f_\circ||_{L_1^2}\leq  ||f||_{L_1^2}.$$  The same is true of a right action.
\end{Proposition}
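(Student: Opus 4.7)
The plan is to first establish the bound for smooth $f$ via a pointwise Jensen inequality, then pass to general $f \in L_1^2(M)$ by density of $C^\infty(M)$ in $L_1^2(M)$.

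For $f \in C^\infty(M)$, smoothness of the action and compactness of $H$ let me differentiate under the integral sign, so $f_\circ$ is smooth with
\[
df_\circ(x) = \frac{1}{|H|}\int_H d(h \cdot f)_x \, d\mu(h).
\]
Applying the Cauchy--Schwarz inequality (equivalently, Jensen applied to the probability measure $d\mu(h)/|H|$ on $H$) pointwise gives
\[
f_\circ(x)^2 \leq \frac{1}{|H|}\int_H (h \cdot f)(x)^2 \, d\mu(h),\qquad
|df_\circ(x)|_g^2 \leq \frac{1}{|H|}\int_H |d(h \cdot f)(x)|_g^2 \, d\mu(h).
\]
Integrating over $M$ and swapping the order of integration by Fubini, and using that each $h:M\to M$ is an isometry (hence measure-preserving and pulling $|df|_g$ back to $|d(h\cdot f)|_g$), the right-hand sides reduce to $\|f\|_{L^2}^2$ and $\|df\|_{L^2}^2$ respectively. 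Summing the two resulting inequalities yields $\|f_\circ\|_{L_1^2}^2 \leq \|f\|_{L_1^2}^2$ in the smooth case.

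For general $f\in L_1^2(M)$, I choose $f_n\in C^\infty(M)$ with $f_n\to f$ in $L_1^2$. By linearity of the averaging operator $f\mapsto f_\circ$ and the smooth-case bound applied to $f_n-f_m$,
\[
\|(f_n)_\circ - (f_m)_\circ\|_{L_1^2} = \|(f_n-f_m)_\circ\|_{L_1^2} \leq \|f_n - f_m\|_{L_1^2},
\]
so $\{(f_n)_\circ\}$ is Cauchy in $L_1^2(M)$; let $g$ be its limit. The smooth-case bound applied to each $f_n$ and passage to the limit give $\|g\|_{L_1^2}\leq\|f\|_{L_1^2}$. It remains to identify $g$ with $f_\circ$: Fubini applied to $(h,x)\mapsto (f-f_n)(h^{-1}\cdot x)$, whose $L^2(H\times M)$-norm equals $|H|^{1/2}\|f-f_n\|_{L^2}$ by isometric invariance, gives $\|(f_n)_\circ - f_\circ\|_{L^2} \leq \|f_n - f\|_{L^2}\to 0$, so $g = f_\circ$ almost everywhere. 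The right-action case is identical after replacing $h^{-1}\cdot x$ by $x\cdot h^{-1}$.

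The main technical obstacle is ensuring $f_\circ$ is a well-defined element of $L_1^2(M)$ for merely weakly differentiable $f$, since one cannot naively differentiate under the integral sign. The route above, which uses $C^\infty$-density and the Fubini/isometry computation to identify the $L_1^2$-limit of $(f_n)_\circ$ with the a.e.-defined integral $f_\circ$, circumvents any direct manipulation of weak derivatives under a vector-valued integral.
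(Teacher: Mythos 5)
Your proof is correct and follows essentially the same route as the paper: establish the bound for smooth $f$ by a pointwise Jensen/Cauchy--Schwarz inequality (which requires differentiation under the integral sign, justified by compactness of $H$), then combine with Fubini and isometry invariance, and finally extend to $L_1^2(M)$ by density. You go one welcome step further than the paper by explicitly verifying, via the Fubini estimate $\|(f_n)_\circ - f_\circ\|_{L^2}\leq\|f_n-f\|_{L^2}$, that the abstract $L_1^2$-limit of $(f_n)_\circ$ actually coincides a.e.\ with the integral formula defining $f_\circ$ — a point the paper leaves implicit when it simply says the operator ``extends to a unique endomorphism.''
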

\begin{proof}
We shall prove only for a left action. The case of a right action is proved in the same way.

Let $\Psi:C^\infty(M)\rightarrow C^\infty(M)$ be the map given by $\Psi(f)=f_\circ$.
We claim that $$||\Psi(f)||_{L_1^2}\leq ||f||_{L_1^2}$$ for $f\in C^\infty(M)$.

By applying H\"older inequality, Fubini's theorem, and the fact that the $H$-action is isometric,
\begin{eqnarray*}
\int_Mf_\circ^2\ d\mu_{g}&=& \int_M\left(\frac{\int_{H} h\cdot f\ d\mu(h)}{|H|}\right)^2 d\mu_{g}\\ &\leq& \int_M\frac{\int_{H} (h\cdot f)^2\ d\mu(h)}{|H|} d\mu_{g}\\ &=& \frac{1}{|H|}\int_H\int_M (h\cdot f)^2\ d\mu_{g}d\mu(h)\\ &=& \frac{1}{|H|}\int_H\int_M f^2\ d\mu_{g}d\mu(h)\\ &=&\int_Mf^2\ d\mu_{g},
\end{eqnarray*}
and
\begin{eqnarray*}
\int_M|df_\circ|^2\ d\mu_{g} &\leq& \int_M\frac{\int_{H} |d(h\cdot f)|^2\ d\mu(h)}{|H|} d\mu_{g}\\ &=& \frac{1}{|H|}\int_H\int_M |d(h\cdot f)|^2\ d\mu_{g}d\mu(h)\\ &=& \frac{1}{|H|}\int_H\int_M |df|^2\ d\mu_{g}d\mu(h)\\ &=&\int_M|df|^2\ d\mu_{g}
\end{eqnarray*}
where the inequality in the 1st line holds due to :
\begin{Lemma}\label{gamsa2}
$$|df_\circ|^2\leq \frac{\int_{H} |d(h\cdot f)|^2\ d\mu(h)}{|H|}.$$
\end{Lemma}
\begin{proof}
It is enough to show it at any point $p\in M$.
Let $(x_1,\cdots,x_m)$ be a normal coordinate around $p$. Then
\begin{eqnarray*}
|df_\circ(p)|^2&=&\sum_i (\frac{\partial f_\circ}{\partial x_i}(p))^2 \\ &=& \sum_i \left(\frac{1}{|H|}\int_{H}\frac{\partial}{\partial x_i}(h\cdot f)(p)\ d\mu(h)\right)^2\\ &\leq& \sum_i \frac{1}{|H|}\int_{H}\left(\frac{\partial (h\cdot f)}{\partial x_i}(p)\right)^2 d\mu(h)\\ &=&
 \frac{1}{|H|}\int_{H}|d(h\cdot f)(p)|^2\ d\mu(h)
\end{eqnarray*}
where the 2nd equality is the application of the Leibniz rule by using that $H$ is compact and $f$ is smooth.
\end{proof}

So the linear map $\Psi$ is uniformly continuous w.r.t. $L_1^2$-topology. Since $C^\infty(M)$ is a dense subset of $L_1^2(M)$, $\Psi$ extends to a unique endomorphism of $L_1^2(M)$ still satisfying $||\Psi||\leq 1$. This completes the proof.
\end{proof}
The following extension of the above proposition will be used later in proving Theorem \ref{Main-theorem}.
\begin{Proposition}\label{gamsa}
Let $M$ and $H$ be as above, and further assume that there exists a smooth $\ell$-form $\Omega$ on $M$ for $0\leq \ell\leq m-1$ such that $\Omega$ is left(or right)-invariant and at each point it can be represented as $e_1^*\wedge\cdots\wedge e_\ell^*$ for orthonormal cotangent vectors $e_i^*$. Then for any $f\in L_1^2(M)$ $$\int_M|df_\circ\wedge \Omega|^2\ d\mu_{g} \leq \int_M|df\wedge \Omega|^2\ d\mu_{g}.$$
\end{Proposition}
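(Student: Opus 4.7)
The plan is to follow the proof of Proposition \ref{gamsa1} essentially verbatim, replacing $|d\cdot|^2$ by $|d\cdot\wedge\Omega|^2$; as in Proposition \ref{gamsa1} I will treat only the left action, the right case being analogous, and I will first establish the inequality for $f\in C^\infty(M)$ and then extend to $L_1^2(M)$ by density.

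The heart of the argument is a pointwise analogue of Lemma \ref{gamsa2}, namely
$$|df_\circ\wedge\Omega|^2(p)\ \leq\ \frac{1}{|H|}\int_H |d(h\cdot f)\wedge\Omega|^2(p)\ d\mu(h)$$
at every $p\in M$. To prove this I would fix $p$, use the hypothesis to write $\Omega(p)=e_1^*\wedge\cdots\wedge e_\ell^*$ with the $e_i^*$ orthonormal, extend to an orthonormal coframe of $T_p^*M$, and then choose normal coordinates $(x_1,\ldots,x_m)$ at $p$ whose coordinate coframe agrees with this extension at $p$. In such coordinates, for any smooth $\eta$, $|d\eta\wedge\Omega|^2(p)$ reduces to the sum of squares $\sum_{i>\ell}(\partial_i\eta(p))^2$, because the $(\ell+1)$-forms $dx^i\wedge dx^1\wedge\cdots\wedge dx^\ell$ with $i>\ell$ are orthonormal at $p$. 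Differentiating under the integral sign in the definition of $f_\circ$ and then applying Cauchy--Schwarz coordinate-by-coordinate gives the pointwise bound, exactly as in Lemma \ref{gamsa2}.

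Integrating this over $M$ and applying Fubini reduces the problem to showing that for each fixed $h\in H$,
$$\int_M |d(h\cdot f)\wedge\Omega|^2\,d\mu_g\ =\ \int_M |df\wedge\Omega|^2\,d\mu_g.$$
This is where the two hypotheses on $\Omega$ (left-invariance and the pointwise orthonormal-simple structure) both enter: writing the action as $\phi_h\colon M\to M$, left-invariance gives $\phi_h^*\Omega=\Omega$, hence $d(h\cdot f)\wedge\Omega=\phi_{h^{-1}}^*(df\wedge\Omega)$, while the isometric nature of $\phi_h$ ensures that the $L^2$-norm of a differential form and of its $\phi_h$-pullback agree on $M$.

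Finally, the extension to $f\in L_1^2(M)$ is routine: approximate by $f_n\in C^\infty(M)$ with $f_n\to f$ in $L_1^2$; by Proposition \ref{gamsa1}, $f_{n,\circ}\to f_\circ$ in $L_1^2$, so $df_{n,\circ}\wedge\Omega\to df_\circ\wedge\Omega$ in $L^2$ and similarly on the right, and continuity of the $L^2$-norm passes the inequality to the limit. I expect the main technical point to be the pointwise step: it is exactly the hypothesis that $\Omega$ is pointwise a product of orthonormal covectors that makes $|d\eta\wedge\Omega|^2$ a clean sum of squares in an adapted frame, so that the component-by-component Cauchy--Schwarz of Lemma \ref{gamsa2} still goes through without losing any factor.
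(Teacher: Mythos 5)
Your proposal is correct and matches the paper's own proof essentially step for step: the adapted normal coordinates at $p$ making $|d\eta\wedge\Omega|^2$ a sum of squares of partials in the directions complementary to $\Omega$, the coordinate-wise Cauchy--Schwarz (differentiating under the integral, valid since $H$ is compact and $f$ smooth), Fubini plus left-invariance of $\Omega$ and isometry of the action to reduce to $\int_M|df\wedge\Omega|^2$, and finally density via Proposition \ref{gamsa1} together with the pointwise bound $|\alpha\wedge\Omega|\leq|\alpha|$ to pass the $L^2$-norms of $d(\cdot)\wedge\Omega$ to the limit. (Your summation $\sum_{i>\ell}$ is in fact the correct index range; the paper's displayed $\sum_{i=m-\ell+1}^m$ is a typo given its choice of coordinates.)
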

\begin{proof}
Again we only prove for a left action. When $\ell=0$, it reduces to the above case, so we consider for $\ell\geq 1$. First we prove for smooth $f$.

To prove the analogue of Lemma \ref{gamsa2}, let $p$ be any point of $M$ and $\Omega(p)$ be equal to $e_1^*\wedge\cdots\wedge e_\ell^*$ for mutually orthonormal $e_i^*\in T_p^*M$. Take a normal coordinate $(x_1,\cdots,x_m)$  around $p$ such that $e_i^*$ is dual to $\frac{\partial}{\partial x_i}|_p$  for $i=1,\cdots,\ell$.
Again by the Leibniz integral rule we can switch the differentiation and integration so that
\begin{eqnarray*}
|df_\circ(p)\wedge \Omega(p)|^2&=&\sum_{i=m-\ell+1}^{m} (\frac{\partial f_\circ}{\partial x_i}(p))^2 \\ &=& \sum_{i=m-\ell+1}^{m} \left(\frac{1}{|H|}\int_{H}\frac{\partial}{\partial x_i}(h\cdot f)(p)\ d\mu(h)\right)^2\\ &\leq& \sum_{i=m-\ell+1}^{m} \frac{1}{|H|}\int_{H}\left(\frac{\partial (h\cdot f)}{\partial x_i}(p)\right)^2 d\mu(h)\\ &=&
 \frac{1}{|H|}\int_{H}|d(h\cdot f)(p)\wedge \Omega(p)|^2\ d\mu(h),
\end{eqnarray*}
and hence
\begin{eqnarray*}
\int_M|df_\circ\wedge \Omega|^2\ d\mu_{g} &\leq& \int_M\frac{\int_{H} |d(h\cdot f)\wedge \Omega|^2\ d\mu(h)}{|H|} d\mu_{g}\\ &=& \frac{1}{|H|}\int_H\int_M |d(L_{h^{-1}}^*(f))\wedge L_{h^{-1}}^*\Omega|^2\ d\mu_{g}d\mu(h)\\  &=& \frac{1}{|H|}\int_H\int_M |L_{h^{-1}}^*(df\wedge \Omega)|^2\ d\mu_{g}d\mu(h)\\ &=& \frac{1}{|H|}\int_H\int_M |df\wedge \Omega|^2\ d\mu_{g}d\mu(h)\\ &=&\int_M|df\wedge \Omega|^2\ d\mu_{g}
\end{eqnarray*}
where $L_{h^{-1}}:M\rightarrow M$ denotes the diffeomorphism given by $x\mapsto h^{-1}\cdot x$.

Now we prove for general $f\in L_1^2(M)$.
\begin{Lemma}\label{jeonghoon}
Suppose $\varphi_n\in L_1^2(M)$ for $n\in \Bbb N$ be a sequence converging to $\varphi$ in $L_1^2(M)$. Then $||d\varphi_n\wedge \Omega||_{L^2}\rightarrow ||d\varphi\wedge \Omega||_{L^2}$ as $n \rightarrow \infty$.
\end{Lemma}
\begin{proof}
Note that for any 1-form $\alpha$ on $M$, $|\alpha|\geq |\alpha\wedge \Omega|$ at each point.
So $$||(d\varphi_n-d\varphi)\wedge \Omega||_{L^2}\leq ||d\varphi_n-d\varphi||_{L^2}\rightarrow 0$$ from which the desired statement follows.
\end{proof}
Take $f_n\in C^\infty(M)$ for $n\in \Bbb N$ such that $||f_n-f||_{L_1^2}<\frac{1}{n}$. By the above lemma, $||df_n\wedge \Omega||_{L^2}\rightarrow ||df\wedge \Omega||_{L^2}$ as $n \rightarrow \infty$.
By Proposition \ref{gamsa1}, $$||(f_n)_\circ-f_\circ||_{L_1^2}\leq ||f_n-f||_{L_1^2}<\frac{1}{n}$$ so that $$||d(f_n)_\circ\wedge \Omega||_{L^2}\rightarrow ||df_\circ\wedge \Omega||_{L^2}$$ by the above lemma.
Now using that the desired inequality hold for each $f_n$, we finally have
\begin{eqnarray*}
\int_M|df_\circ\wedge \Omega|^2\ d\mu_{g} &=& \lim_{n\rightarrow \infty} \int_M|d(f_n)_\circ\wedge \Omega|^2\ d\mu_{g}\\ &\leq& \lim_{n\rightarrow\infty} \int_M|df_n\wedge \Omega|^2\ d\mu_{g}\\ &=& \int_M|df\wedge \Omega|^2\ d\mu_{g}.
\end{eqnarray*}

\end{proof}

\subsection{Rayleigh quotient}
In this subsection we provide some standard facts about the Rayleigh quotient.
First we shall use the following elementary lemma very often when estimating the Rayleigh quotient.
\begin{Lemma}\label{Sunukjian}
Let $a\ne 0$ and $b$ be real constants. Then there exists a constant $C:=4|\frac{b}{a}|+3$ such that
$$|\frac{b}{a}-\frac{b+y}{a+x}|\leq C\max (|\frac{x}{a}|, |\frac{y}{a}|)$$ for any real $x,y$ such that $|\frac{x}{a}|<\frac{1}{2}$.
\end{Lemma}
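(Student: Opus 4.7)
The plan is to reduce the inequality to a direct algebraic computation. First I would put the difference over a common denominator:
\begin{equation*}
\frac{b}{a}-\frac{b+y}{a+x}=\frac{b(a+x)-a(b+y)}{a(a+x)}=\frac{bx-ay}{a(a+x)}.
\end{equation*}
The denominator can be rewritten as $a^{2}(1+\tfrac{x}{a})$, and the hypothesis $|x/a|<\tfrac{1}{2}$ gives the uniform lower bound $|1+x/a|\geq \tfrac{1}{2}$, hence $|a(a+x)|\geq |a|^{2}/2$.

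Next I would apply the triangle inequality to the numerator, writing $|bx-ay|\leq |b||x|+|a||y|$, and divide through:
\begin{equation*}
\left|\frac{b}{a}-\frac{b+y}{a+x}\right|\leq \frac{2(|b||x|+|a||y|)}{|a|^{2}}=2\left|\frac{b}{a}\right|\left|\frac{x}{a}\right|+2\left|\frac{y}{a}\right|.
\end{equation*}
Setting $M:=\max(|x/a|,|y/a|)$, the right-hand side is at most $(2|b/a|+2)M$, which is clearly dominated by $(4|b/a|+3)M=CM$, giving the stated bound.

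There is no substantive obstacle here; the only thing to be careful about is the lower bound on $|a+x|$, which is why the hypothesis $|x/a|<\tfrac{1}{2}$ is imposed (any strict constant below $1$ would work, but $\tfrac{1}{2}$ yields the clean constant $C=4|b/a|+3$ in the final estimate).
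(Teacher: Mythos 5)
Your proof is correct, and it takes a genuinely different (and cleaner) route from the paper. The paper applies the mean value theorem to $f(t)=\tfrac{1}{1+t}$ on $|t|<\tfrac{1}{2}$, writing $\tfrac{b+y}{a+x}=\bigl(\tfrac{b}{a}+\tfrac{y}{a}\bigr)\bigl(1+\mu(x)\tfrac{x}{a}\bigr)$ with $\mu(x)\in(-4,-\tfrac{4}{9})$, and then expands and bounds term by term to arrive at exactly $C=4|b/a|+3$. You instead clear denominators to get the identity
\begin{equation*}
\frac{b}{a}-\frac{b+y}{a+x}=\frac{bx-ay}{a(a+x)},
\end{equation*}
bound the denominator below by $|a|^2/2$, and apply the triangle inequality to the numerator. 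This avoids any calculus, is shorter, and actually yields the sharper constant $2|b/a|+2$, which you then simply dominate by the stated $C$. The paper's mean-value route is a bit more roundabout and only produces the weaker $C$ directly; the two proofs agree on the statement but yours is the more elementary and quantitatively tighter argument.
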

\begin{proof}
By applying the mean value theorem to $f(t)=\frac{1}{1+t}$ for $|t|<\frac{1}{2}$, there exists $t_0$ such that $|t_0|\in (0,\frac{1}{2})$ and
\begin{eqnarray*}
\frac{1}{1+t}&=&f(0)+f'(t_0)(t-0)\\ &=& 1-\frac{1}{(1+t_0)^2}t.
\end{eqnarray*}
Here $t_0$ and hence $f'(t_0)$ depend on $t$, and $f'(t_0)$ ranges in $(f'(-\frac{1}{2}),f'(\frac{1}{2}))= (-4,-\frac{4}{9})$.

Using this, we have
\begin{eqnarray*}
\frac{b+y}{a+x}&=&\frac{\frac{b}{a}+\frac{y}{a}}{1+\frac{x}{a}}\\ &=& (\frac{b}{a}+\frac{y}{a})(1+\mu(x)\frac{x}{a})
\end{eqnarray*}
where $\mu(x)\in (-4,-\frac{4}{9})$, and therefore
\begin{eqnarray*}
|\frac{b+y}{a+x}-\frac{b}{a}|&\leq&  |\frac{b}{a}\mu(x)\frac{x}{a}|+|\frac{y}{a}(1+\mu(x)\frac{x}{a})|\\&\leq& 4|\frac{b}{a}\frac{x}{a}|+(1+4\frac{1}{2})|\frac{y}{a}| \\
&\leq& C\max (|\frac{x}{a}|, |\frac{y}{a}|).
\end{eqnarray*}

\end{proof}

\begin{Proposition}\label{yam-0}
Let $(M,g)$ be a smooth closed Riemannian $m$-manifold for $m\geq 1$ and
define $$\mathfrak{A}:=\{\varphi\in L_1^2(M)|\ ||\varphi||_{L_1^2(M)}<C_1, ||\varphi||_{L^2(M)}>C_2\}$$ where $C_i$ are positive constants.
Then there exist positive constants $\varepsilon$ and $\check{C}$ such that $||f||_{L_1^2(M)}<\varepsilon$ implies
$$|R_g(\varphi)-R_g(\varphi+f)|\leq \check{C}||f||_{L_1^2(M)}$$ for any $\varphi\in \mathfrak{A}$.
\end{Proposition}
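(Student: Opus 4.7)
The plan is to apply Lemma \ref{Sunukjian} with the identifications
$$a=\int_M\varphi^2\,d\mu_g,\qquad b=\int_M|d\varphi|^2\,d\mu_g,$$
$$x=\int_M((\varphi+f)^2-\varphi^2)\,d\mu_g,\qquad y=\int_M(|d(\varphi+f)|^2-|d\varphi|^2)\,d\mu_g,$$
so that $R_g(\varphi)=b/a$ and $R_g(\varphi+f)=(b+y)/(a+x)$. It then suffices to verify, uniformly for $\varphi\in\mathfrak{A}$ and $||f||_{L_1^2}<\varepsilon$, the three conditions needed by Lemma \ref{Sunukjian}: (i) $|b/a|$ is bounded by a constant independent of $\varphi$, (ii) $|x/a|<1/2$, and (iii) $\max(|x/a|,|y/a|)$ is linearly controlled by $||f||_{L_1^2}$.

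For (i), the defining inequalities of $\mathfrak{A}$ give $a=||\varphi||_{L^2}^2>C_2^2$ and $b\leq ||\varphi||_{L_1^2}^2<C_1^2$, whence $|b/a|<C_1^2/C_2^2$; in particular the coefficient $4|b/a|+3$ appearing in Lemma \ref{Sunukjian} is uniformly bounded by $C_0:=4C_1^2/C_2^2+3$. For (iii), I would invoke the two inequalities of Lemma \ref{Zenith} on the pair $(\varphi,\varphi+f)$ to obtain
$$|x|,\ |y|\leq ||f||_{L_1^2}\,(||\varphi+f||_{L_1^2}+||\varphi||_{L_1^2})\leq (2C_1+\varepsilon)||f||_{L_1^2},$$
and then divide by $a>C_2^2$. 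For (ii), I would then shrink $\varepsilon$ so that $(2C_1+\varepsilon)\varepsilon/C_2^2<1/2$; this enforces $|x/a|<1/2$ uniformly on $\mathfrak{A}$. Lemma \ref{Sunukjian} finally assembles these three pieces into $|R_g(\varphi)-R_g(\varphi+f)|\leq \check{C}||f||_{L_1^2}$ with $\check{C}:=C_0(2C_1+\varepsilon)/C_2^2$.

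There is no genuine obstacle: the proposition is a uniform local-Lipschitz property of the Rayleigh quotient away from the origin in $L^2(M)$, and the two lemmas invoked above have been written precisely to cover each ingredient. The only subtlety to keep track of is that $\varepsilon$ must be chosen small enough in terms of $C_1,C_2$, but independently of $\varphi$, so that the hypothesis $|x/a|<1/2$ of Lemma \ref{Sunukjian} holds uniformly over all of $\mathfrak{A}$.
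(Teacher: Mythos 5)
Your proposal is correct and follows essentially the same route as the paper: both identify the same $a,b,x,y$, bound $|b/a|$ using the defining inequalities of $\mathfrak{A}$, control $|x|$ and $|y|$ by $(2C_1+O(\varepsilon))\,\|f\|_{L_1^2}$ (you via Lemma \ref{Zenith}, the paper by running H\"older's inequality directly, which is the same computation), shrink $\varepsilon$ to enforce $|x/a|<1/2$, and finish with Lemma \ref{Sunukjian}. The only cosmetic point is that your $\check{C}$ is written in terms of $\varepsilon$; since $\varepsilon$ is fixed first in terms of $C_1,C_2$ this is harmless, but one can simply take $\varepsilon<1$ and replace $(2C_1+\varepsilon)$ by $(2C_1+1)$ to make $\check{C}$ manifestly depend only on $C_1,C_2$, exactly as the paper does.
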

\begin{proof}
First, for any $\varphi\in \mathfrak{A}$ $$|R_g(\varphi)|=\frac{\int_M |d\varphi|^2d\mu_g}{\int_M \varphi^2 d\mu_g}<\left(\frac{C_1}{C_2}\right)^2.$$
To estimate the difference between $R_g(\varphi)$ and
\begin{eqnarray*}
R_g(\varphi+f) &=& \frac{\int_M\langle d\varphi+df, d\varphi+df\rangle d\mu_g}{\int_M (\varphi+f)^2d\mu_g}\\ &=& \frac{\int_M |d\varphi|^2d\mu_g+2\langle d\varphi,df\rangle_{L^2} +\int_M |df|^2 d\mu_g}{||\varphi||^2_{L^2}+2\langle \varphi, f\rangle_{L^2} +||f||^2_{L^2}},
\end{eqnarray*}
we wish to apply the above lemma.

If $||f||_{L_1^2}< \min(1, (C_2)^2(4C_1+2)^{-1})$, then by H\"older inequality
$$|2\langle \varphi, f\rangle_{L^2} +||f||^2_{L^2}|\leq (2||\varphi||_{L^2}+||f||_{L^2})||f||_{L^2}\leq (2C_1+1)||f||_{L_1^2},$$
$$|2\langle d\varphi,df\rangle_{L^2} +\int_M |df|^2d\mu_g|\leq (2||\varphi||_{L_1^2}+||f||_{L_1^2})||f||_{L_1^2}\leq (2C_1+1)||f||_{L_1^2},$$ and hence
$$\frac{|2\langle \varphi, f\rangle_{L^2} +||f||^2_{L^2}|}{||\varphi||^2_{L^2}}\leq \frac{(2C_1+1)||f||_{L_1^2}}{(C_2)^2}<\frac{1}{2}.$$
So the application of Lemma \ref{Sunukjian} with $a=||\varphi||^2_{L^2}$ and $b=\int_M |d\varphi|^2d\mu_g$ gives
\begin{eqnarray*}
|R_g(\varphi)-R_g(\varphi+f)|&\leq& (4|\frac{b}{a}|+3)\max (\frac{(2C_1+1)||f||_{L_1^2}}{a}, \frac{(2C_1+1)||f||_{L_1^2}}{a})\\
&\leq& (4\left(\frac{C_1}{C_2}\right)^2+3)\frac{(2C_1+1)||f||_{L_1^2}}{(C_2)^2}
\end{eqnarray*}
which yields the desired inequality.
\end{proof}

\begin{Proposition}\label{yam-1}
On a smooth closed Riemannian $m$-manifold $(M,g)$ for $m\geq 1$, let $$\{\varphi_i\in L_1^2(M)|\int_M\varphi_i\ d\mu_g=0, \int_M\varphi_i^2\ d\mu_g=1, i\in \Bbb N\}$$ be a minimizing sequence for $R_g$, i.e. $\lim_{i\rightarrow \infty}R_g(\varphi_i)= \lambda_1(M,g)$.
Then there exists the first eigenfunction $\varphi$ to which a subsequence of $\{\varphi_i\}$ converges in $L_1^2$-norm.
\end{Proposition}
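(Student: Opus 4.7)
The plan is to run the direct method of the calculus of variations in the Hilbert space $L_1^2(M)$. First I would note that the minimizing sequence $\{\varphi_i\}$ is automatically bounded in $L_1^2(M)$: the normalization $\int_M \varphi_i^2\, d\mu_g = 1$ controls the $L^2$-part, and since $R_g(\varphi_i) \to \lambda_1(M,g)$ we have $\int_M |d\varphi_i|^2\, d\mu_g$ bounded as well, so $\|\varphi_i\|_{L_1^2}^2 \to 1 + \lambda_1(M,g)$.

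Next, by the reflexivity of the Hilbert space $L_1^2(M)$, I would pass to a subsequence, still denoted $\{\varphi_i\}$, that converges weakly in $L_1^2(M)$ to some $\varphi \in L_1^2(M)$. By the Rellich--Kondrachov compact embedding $L_1^2(M) \hookrightarrow L^2(M)$ on the smooth closed manifold $M$, a further subsequence converges strongly in $L^2(M)$ to $\varphi$. The two linear constraints $\int_M \varphi_i\, d\mu_g = 0$ and $\int_M \varphi_i^2\, d\mu_g = 1$ pass to the limit under $L^2$-convergence, so $\varphi$ is admissible: $\int_M \varphi\, d\mu_g = 0$ and $\int_M \varphi^2\, d\mu_g = 1$, and in particular $\varphi \not\equiv 0$.

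Then, by the lower semicontinuity of the norm under weak convergence,
\begin{equation*}
\int_M |d\varphi|^2\, d\mu_g \;\leq\; \liminf_{i\to\infty} \int_M |d\varphi_i|^2\, d\mu_g \;=\; \lambda_1(M,g).
\end{equation*}
Since $\|\varphi\|_{L^2}^2 = 1$, this yields $R_g(\varphi) \leq \lambda_1(M,g)$, and by the variational characterization of $\lambda_1(M,g)$ on the admissible class the reverse inequality also holds, so equality is attained and $\varphi$ is a first eigenfunction.

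Finally, to upgrade $L^2$-convergence to $L_1^2$-convergence I would use the standard Hilbert-space fact that weak convergence together with convergence of norms implies strong convergence. Here $\|\varphi_i\|_{L_1^2}^2 = 1 + R_g(\varphi_i) \to 1 + \lambda_1(M,g) = \|\varphi\|_{L_1^2}^2$, and combined with $\varphi_i \rightharpoonup \varphi$ in $L_1^2(M)$ this gives $\varphi_i \to \varphi$ in $L_1^2$-norm. The only nontrivial ingredient is the Rellich--Kondrachov compactness on a closed manifold, which is standard; the rest is routine manipulation of weak/strong convergence, so I do not anticipate a serious obstacle.
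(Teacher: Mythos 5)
Your proof is correct and follows essentially the same route as the paper's: pass to a weakly $L_1^2$-convergent, strongly $L^2$-convergent subsequence, verify admissibility of the weak limit, use weak lower semicontinuity of the Dirichlet energy to show it is a minimizer, and then upgrade to strong $L_1^2$-convergence via the Hilbert-space fact that weak convergence plus norm convergence yields strong convergence. The only cosmetic difference is that the paper spells out the lower semicontinuity and the final upgrade by hand (via Cauchy--Schwarz and an explicit expansion of $\|d\varphi - d\varphi_i\|_{L^2}^2$), whereas you cite them as standard; the substance is identical.
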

\begin{proof}
From
\begin{eqnarray*}
\lim_{i\rightarrow \infty}\int_M|d\varphi_i|^2\ d\mu_g&=& \lambda_1(M,g),
\end{eqnarray*}
$\{\varphi_i|i\in \Bbb N\}$ is a bounded subset of $L_1^2(M)$. Then there exists $\varphi\in L_1^2(M)$ and a subsequence converging to $\varphi$ weakly in $L_1^2$ and also strongly in $L^2$ by the compact embedding $L_1^2\hookrightarrow L^2$.
A further subsequence converges to $\varphi$ pointwisely almost everywhere. By abuse of notation we just let $\{\varphi_i\}$ be the subsequence.

Since $\varphi_i\rightarrow \varphi$ in $L^2$,
$$
\int_M\varphi\ d\mu_g=0
\ \ \ \ \ \
\textrm{and}
\ \ \ \ \ \
\lim_{i\rightarrow \infty}\int_M \varphi_i\varphi\  d\mu_g=\int_M\varphi^2\ d\mu_g=\lim_{i\rightarrow \infty}\int_M \varphi_i^2  d\mu_g.
$$
By the weak convergence in $L_1^2$,
$$\int_M|d\varphi|^2d\mu_g+\int_M\varphi^2\ d\mu_g = \lim_{i\rightarrow \infty}(\int_M\langle d\varphi_i,d\varphi\rangle d\mu_g+\int_M \varphi_i\varphi\  d\mu_g),$$ so
\begin{eqnarray}\label{jongguk-3}
\int_M|d\varphi|^2d\mu_g &=& \lim_{i\rightarrow \infty}\int_M\langle d\varphi_i,d\varphi\rangle d\mu_g\\ &\leq& \limsup_{i\rightarrow \infty}(\int_M|d\varphi_i|^2d\mu_g    )^{\frac{1}{2}}(\int_M|d\varphi|^2d\mu_g)^{\frac{1}{2}}\nonumber\\ &=& \lim_{i\rightarrow \infty}(\int_M|d\varphi_i|^2d\mu_g    )^{\frac{1}{2}}(\int_M|d\varphi|^2d\mu_g)^{\frac{1}{2}}\nonumber
\end{eqnarray}
implying that
$$\int_M|d\varphi|^2d\mu_g \leq \lim_{i\rightarrow \infty}\int_M|d\varphi_i|^2d\mu_g.$$

Therefore $$\lambda_1(M,g)=\lim_{i\rightarrow \infty}\frac{\int_M|d\varphi_i|^2\ d\mu_g}{\int_M \varphi_i^2\ d\mu_g} \geq
\frac{\int_M|d\varphi|^2\ d\mu_g}{\int_M \varphi^2\ d\mu_g},$$
and hence this inequality must be an equality so that $\varphi$ is a 1st eigenfunction and $$\lim_{i\rightarrow \infty}\int_M|d\varphi_i|^2d\mu_g=\int_M|d\varphi|^2d\mu_g.$$

Combined with (\ref{jongguk-3}), it gives
\begin{eqnarray*}
\lim_{i\rightarrow \infty}\int_M|d\varphi-d\varphi_i|^2d\mu_g&=&\lim_{i\rightarrow \infty}\int_M(|d\varphi|^2-2\langle d\varphi_i,d\varphi\rangle+|d\varphi_i|^2)\ d\mu_g=0,
\end{eqnarray*}
completing the proof that $\varphi_i\rightarrow\varphi$ strongly in $L_1^2$.

\end{proof}

\subsection{Fiberwise symmetrization}

Here we summarize the fundamentals of fiberwise symmetrization which will be the essential tool for obtaining all our main results. Throughout the paper we denote the round $m$-sphere with volume $V$ by $S^m_V$ whose metric is thus given by $\textsl{g}_V:={\left( \frac{V}{V_m} \right) }^{\frac{2}{m}}g_{_{\Bbb S}}$.

A smooth function $F$ defined on a product manifold $N\times M$ where we always regard $M$ as fiber and $N$ as base is called a generic-fiberwise Morse function if there exists an open dense subset $N_0$ of $N$ with measure-zero complement such that $F$ restricted to each fiber over $N_0$ is a Morse function on $M$. One may take the largest possible $N_0$, i.e. the union of all such $N_0$, and we denote it by $N_0(F)$. It turns out that the subset of such functions are dense in $C^\infty(N\times M, \Bbb R)$ with $C^2$-topology.
\begin{Theorem}[\cite{Sung}]\label{morse-1}
Let $M$ and $N$ be smooth closed manifolds and $C^\infty(N\times M)$ be endowed with $C^2$-topology. If $F\in C^\infty(N\times M)$, then for any open neighborhood $U\subseteq C^\infty(N\times M)$ of $F$ there exist $\tilde{F}\in U$ which is a generic-fiberwise Morse function.

Under further assumption that a compact Lie group $G$ acts on $N$ and $M$ smoothly and $F:N\times M\rightarrow \Bbb R$ is invariant under this action, $\tilde{F}$ can be chosen such that the $G$-orbit of $\tilde{F}$ also remains in $U$. If the action on $M$ is trivial, then $\tilde{F}$ can be chosen to be $G$-invariant.
\end{Theorem}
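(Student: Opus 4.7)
The strategy is to phrase ``generic-fiberwise Morse'' as a transversality condition and produce the required $\tilde F$ by the embedding trick plus parametric transversality. Recall that $f\in C^\infty(M)$ is Morse iff its differential $df:M\to T^*M$ is transverse to the zero section $0_M\subset T^*M$. For $F\in C^\infty(N\times M)$ set
\[
\Phi_F:N\times M\longrightarrow T^*M,\qquad (s,p)\longmapsto d_pF(s,\cdot).
\]
Once I can arrange $\Phi_{\tilde F}$ to be transverse to $0_M$ on all of $N\times M$, the preimage $W:=\Phi_{\tilde F}^{-1}(0_M)$ is a compact submanifold of dimension $n$, and Sard's theorem applied to the proper smooth projection $\pi_N|_W:W\to N$ shows that its set of critical values is closed and of measure zero. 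Its complement $N_0$ is thus open, dense, and of full measure, and by the standard relation between regular values of $\pi_N|_W$ and slicewise transversality of $\Phi_{\tilde F}$, the restriction $\tilde F(s,\cdot)$ is Morse on $M$ for every $s\in N_0$---this is exactly the generic-fiberwise Morse property.

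To obtain such a $\tilde F$, pick any smooth embedding $\iota:M\hookrightarrow\mathbb{R}^K$, denote by $x_1,\ldots,x_K\in C^\infty(M)$ the pullbacks of the Euclidean coordinates, and for $a=(a_1,\ldots,a_K)\in\mathbb{R}^K$ consider
\[
\tilde F_a(s,p):=F(s,p)+\sum_{i=1}^K a_i x_i(p),\qquad \hat\Phi(a,s,p):=\Phi_{\tilde F_a}(s,p)=d_pF(s,\cdot)+\sum_{i=1}^K a_i\, dx_i|_p.
\]
Differentiating in $a$ at any $(a,s,p)$ hits every cotangent vector at $p$, because $\{dx_i|_p\}$ spans $T_p^*M$ (it is the $\iota$-pullback of a coframe on $\mathbb{R}^K$), so $\hat\Phi:\mathbb{R}^K\times N\times M\to T^*M$ is a submersion and in particular is transverse to $0_M$. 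The parametric transversality theorem then yields $\Phi_{\tilde F_a}\pitchfork 0_M$ for almost every $a\in\mathbb{R}^K$. Since $a\mapsto \sum_i a_i x_i$ is continuous into $C^\infty(M)$ with the $C^2$-topology and vanishes at $a=0$, the set $\{a:\tilde F_a\in U\}$ contains a neighborhood of $0$; its intersection with the full-measure set of admissible $a$ is nonempty and yields the desired $\tilde F=\tilde F_a$.

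The equivariant addendum reduces to two observations. First, when $F$ is $G$-invariant and $U\ni F$ is open, compactness of $G$ together with continuity of its action on $C^\infty(N\times M)$ in the $C^2$-topology implies, by a standard tube-lemma argument, that $V:=\bigcap_{g\in G}g^{-1}U$ is an open $G$-invariant neighborhood of $F$ contained in $U$; applying the first part inside $V$ gives $\tilde F\in V$ generic-fiberwise Morse, whence $G\cdot\tilde F\subseteq V\subseteq U$. Second, if $G$ acts trivially on $M$, then each $x_i\circ\mathrm{pr}_M$ is already $G$-invariant on $N\times M$, so $\tilde F_a$ inherits $G$-invariance from $F$ and the same construction returns a $G$-invariant $\tilde F$. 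The main technical point to be careful about---and the only place where the compactness hypothesis on $N$ and $M$ is genuinely used---is the step promoting the exceptional set in $N$ from ``measure zero'' to \emph{closed} measure zero so that $N_0$ is actually open and dense; this rests on properness of $\pi_N|_W$, which is automatic since $W\subseteq N\times M$ is compact.
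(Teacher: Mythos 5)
Your proof is correct. The present paper states this as Theorem 2.6 cited from \cite{Sung} without reproducing a proof, so a line-by-line comparison is not possible, but the argument you give is the canonical Thom-type one and does everything the statement asks. The key ingredients are all in place: the transversality characterization of Morse via the fiberwise differential $\Phi_F$, the embedding trick with perturbations $\sum a_i x_i$ pulled back from $M$ which makes $\hat\Phi$ a submersion (the $a$-derivatives hit the vertical fiber of $T^*M$ because the $dx_i$ span each $T^*_pM$, and the $p$-derivatives project isomorphically onto the base), Abraham--Robbin/Thom parametric transversality twice (once over $\mathbb{R}^K$ to get a.e.\ $a$, once over $N$ via the critical-locus $W$ to get slicewise Morse at regular values of $\pi_N|_W$), and compactness of $W$ (hence properness of $\pi_N|_W$) to upgrade the Sard-measure-zero set of critical values to a \emph{closed} measure-zero set, so that $N_0$ is open, dense, and of full measure.

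Two small technical points, neither a gap. First, when you assert ``$\hat\Phi$ is a submersion'' from the $a$-derivatives alone, the implicit companion fact is that the $p$-derivatives project onto $T_pM$ under the bundle projection $T^*M\to M$; you clearly know this, but for a fully written proof it should be said, since spanning the vertical fiber alone only gives transversality to $0_M$ rather than submersivity (though transversality to $0_M$ is all you need to invoke parametric transversality). Second, in the $G$-equivariant part, the intersection $V=\bigcap_{g}g^{-1}U$ is a priori only $G$-invariant; its openness requires applying the tube lemma at an arbitrary $H\in V$, not just at $F$. For the proof this doesn't matter because the tube lemma already hands you an open $V'\ni F$ with $G\cdot V'\subseteq U$, and placing $\tilde F$ in $V'$ suffices to keep the whole orbit in $U$. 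Finally, the joint continuity of the induced $G$-action on $C^\infty(N\times M)$ in the $C^2$-topology, which you invoke, does hold for a smooth action of a compact Lie group on a compact manifold, but it is worth flagging as a nontrivial input rather than a triviality.
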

Given a smooth generic-fiberwise Morse function $F :N\times M \rightarrow \Bbb R$, we define $$F_{\bar{*}}:N_0\times S^m_V\rightarrow \Bbb R$$ as the fiberwise spherical rearrangement of $F$; namely $F_{\bar{*}}|_{\{s\}\times S^m_V}$ for $s\in N_0$ is a radially-symmetric continuous function with minimum at the south pole and satisfies
\begin{eqnarray*}
\mu(\{x\in M| F(s,x)<t \})=\mu(\{y\in S^m_V| F_{\bar{*}}(s,y) <t \})
\end{eqnarray*}
for all $t\in \Bbb R$. By the radial symmetry of a function on $S^m$ we mean its invariance under the standard action of $O(m)$ around the axis passing through the two poles. By Morse theory, $F_{\bar{*}}|_{\{s\}\times S^m_V}$ for $s\in N_0$ is not only continuous but also piecewise-smooth. An important property of $F_{\bar{*}}$ is that for any $p> 0$ and $(a,b)\subset \Bbb R$
\begin{eqnarray}\label{libraryinNY}
\int_{\{x\in M| a<F(s,x)<b \}}F(s,x)^p\ d\mu_g=\int_{\{y\in S^m_{V}|a<F_{\bar{*}}(s,y)<b\}} F_{\bar{*}}(s,y)^p\ d\mu_{\textsl{g}_V}
\end{eqnarray}
where $(\cdot)^p$ means $|\cdot|^p$ if $p$ is not an integer, and it extends uniquely to quite a regular function defined on the whole $N\times S^m_V$.
\begin{Theorem}[\cite{Sung}]\label{main-estimates}
Let $(M^m,g)$ and $(N^n,h)$ be smooth closed Riemannian manifolds such that $M$ has volume $V$. If $F:N\times M\stackrel{C^\infty}{\rightarrow} \Bbb R$ is a generic-fiberwise Morse function, then $F_{\bar{*}}$ extends to a unique radially-symmetric Lipschitz continuous function in $L_1^p(N\times S^m_V)$ for any $p\geq 1$ which is smooth on an open dense subset with measure-zero complement. In addition, if a Lie group $G$ acts on $(M,g)$ isometrically and on $N$ both by smooth right actions, then for any $\mathfrak{g}\in G$,
\begin{eqnarray}\label{Ocean-nuclear}
(F\cdot \mathfrak{g})_{\bar{*}}=F_{\bar{*}}\cdot \mathfrak{g}
\end{eqnarray}
where the induced $G$ action on $N\times S^m_V$ acts trivially on $S^m_V$. The analogous statement holds for a left action too.

Moreover for any $p\geq 1$, $q\in N_0(F)$, and $v\in T_qN$
$$\int_{S^m_V} \left|v(F_{\bar{*}})\right|^p d\mu_{\textsl{g}_V} \leq
\int_{M} \left|v(F)\right|^p d\mu_g, \ \ \ \
\int_{S^m_V} \left|d^S F_{\bar{*}}\right|_{\textsl{g}_V}^p d\mu_{\textsl{g}_V} \leq C^p
\int_{M} \left|d^M F\right|_g^p d\mu_g$$
where $d^M$ and $d^S$ are the exterior derivatives (a.k.a. differentials) on $m$-manifolds $\{q\}\times M$ and $\{q\}\times S^m$ respectively and $C>0$ is a constant depending only on $(M,g)$. If $Ric_g\geq k>0$, then $$C=\left(\frac{V_m}{V}\right)^{\frac{1}{m}}\sqrt{\frac{m-1}{k}}.$$
\end{Theorem}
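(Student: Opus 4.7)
The plan is to construct $F_{\bar{*}}$ from fiberwise distribution functions and then reduce the regularity, equivariance, and two derivative inequalities to classical symmetrization arguments together with, for the spherical estimate, the L\'evy--Gromov isoperimetric inequality. For $s\in N_0$ and $t\in\Bbb R$ I would set
\[
\mu_F(s,t):=\mu_g(\{x\in M: F(s,x)<t\})
\]
and define $F_{\bar{*}}(s,y):=\Phi_s(r(y))$, where $r(y)$ is the distance to the south pole of $S^m_V$ and $\Phi_s$ is the composition of the inverse of $t\mapsto\mu_F(s,t)$ with the strictly increasing smooth volume function of geodesic caps of $S^m_V$. By construction this is radially symmetric in $y$ and fiberwise equimeasurable to $F$. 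The equivariance $(F\cdot\mathfrak{g})_{\bar{*}}=F_{\bar{*}}\cdot\mathfrak{g}$ is then immediate from $\mu_{F\cdot\mathfrak{g}}(s,t)=\mu_F(s\cdot\mathfrak{g}^{-1},t)$, since $\mathfrak{g}$ acts isometrically on each fiber and, by assumption, trivially on $S^m_V$.

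For regularity, I would use that $F(s,\cdot)$ is Morse for $s\in N_0$, so by the coarea formula $\mu_F(s,\cdot)$ is $C^1$ in $t$ outside the finite set of critical values with derivative $\int_{\{F(s,\cdot)=t\}}|d^MF|_g^{-1}\,d\mathcal{H}^{m-1}$. The implicit function theorem then makes $\Phi_s$ jointly smooth in $(s,r)$ on an open subset of $N_0\times(0,\pi(V/V_m)^{1/m})$ whose complement has measure zero. Lipschitz continuity on the whole $N\times S^m_V$ follows from the uniform $L^\infty$-bound of $F$ together with the gradient estimates proved below, and extension beyond $N_0$ is by the continuous dependence of $\mu_F(s,t)$ on $s$ for generic-fiberwise Morse $F$.

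For the horizontal estimate, fix $q\in N_0(F)$ and $v\in T_qN$, choose a smooth curve $\gamma$ with $\gamma(0)=q$, $\gamma'(0)=v$, and differentiate the identity
\[
\mu_F(\gamma(\tau),F_{\bar{*}}(\gamma(\tau),y))=\mathrm{Vol}(\text{geodesic cap of radius }r(y))
\]
in $\tau$ at $\tau=0$. A direct computation expresses $v(F_{\bar{*}})(y)$ as the mean of $v(F)$ over the level set $\{x\in M:F(q,x)=F_{\bar{*}}(q,y)\}$ with respect to the coarea measure $|d^MF|_g^{-1}\,d\mathcal{H}^{m-1}$. Jensen's inequality for the convex function $|\cdot|^p$ applied level-set by level-set, combined with the equimeasurability $d\mu_g/|d^MF|_g=d\mu_{\textsl{g}_V}/|d^SF_{\bar{*}}|_{\textsl{g}_V}$ on corresponding level sets, yields $\int_{S^m_V}|v(F_{\bar{*}})|^p\leq\int_M|v(F)|^p$.

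The spherical estimate is the P\'olya--Szeg\H{o} step, and this is where the hard part lies. By the coarea formula on $M$,
\[
\int_M|d^MF|_g^p\,d\mu_g=\int_\Bbb R\left(\int_{\{F(q,\cdot)=t\}}|d^MF|_g^{p-1}\,d\mathcal{H}^{m-1}\right)dt,
\]
with the analogous identity on $S^m_V$ where the level sets are geodesic spheres of explicit $(m-1)$-volume. Applying H\"older's inequality to the inner integrals reduces matters to bounding the perimeter of $\{F(q,\cdot)=t\}$ from below by the perimeter of the corresponding geodesic sphere in $S^m_V$. Under $Ric_g\geq k>0$ this is precisely the L\'evy--Gromov isoperimetric inequality, applied after two rescalings: first a homothety $g\mapsto(k/(m-1))g$ to normalize the Ricci bound to $m-1$, then a further homothety by $(V_m/V)^{1/m}$ to match the model sphere to $S^m_V$. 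Tracking the two scaling factors through the comparison produces precisely $C=(V_m/V)^{1/m}\sqrt{(m-1)/k}$. The main obstacle is making this sharp constant emerge cleanly through the two rescalings and through the $p$-dependence of H\"older; absent the Ricci hypothesis one still extracts a finite $C$ from any isoperimetric profile of $(M,g)$, but the closed form requires the L\'evy--Gromov model.
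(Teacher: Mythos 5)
This result is quoted from \cite{Sung} and is not proved in the present paper, so there is no in-paper argument to compare your proposal against; I can only assess it on its own terms. Your reconstruction is the standard symmetrization proof, and the two derivative estimates are sound. For the horizontal bound, differentiating $\mu_F(\gamma(\tau),F_{\bar{*}}(\gamma(\tau),y))=\mathrm{const}$ indeed expresses $v(F_{\bar{*}})$ at level $t=F_{\bar{*}}(q,y)$ as the average of $v(F)$ over $\{F(q,\cdot)=t\}$ against the coarea weight $|d^MF|_g^{-1}\,d\mathcal{H}^{m-1}$, and Jensen plus the equality of coarea measures on matching levels closes it with no constant. For the P\'olya--Szeg\H{o} bound, your combination of coarea, H\"older on each level set (yielding $\int_{\{F=t\}}|d^MF|^{p-1}\geq P_M(t)^p\bigl(\int_{\{F=t\}}|d^MF|^{-1}\bigr)^{1-p}$), and an isoperimetric comparison $P_M(t)\geq C^{-1}P_{S^m_V}(t)$ is exactly the right reduction. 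The constant tracking checks out: after the homothety $\tilde g=\tfrac{k}{m-1}g$ normalizing $Ric_{\tilde g}\geq m-1$, L\'evy--Gromov gives $P_M(t)\geq\sqrt{k/(m-1)}\,(V/V_m)\,P_{\mathbb S^m}(\beta)$ where $\beta$ is the volume fraction, and since the perimeter of the matching cap in $S^m_V$ is $(V/V_m)^{(m-1)/m}P_{\mathbb S^m}(\beta)$, one gets $P_M(t)\geq(V/V_m)^{1/m}\sqrt{k/(m-1)}\,P_{S^m_V}(t)$, i.e. $C=(V_m/V)^{1/m}\sqrt{(m-1)/k}$ as claimed. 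The equivariance argument via $\mu_{F\cdot\mathfrak g}(s,t)=\mu_F(s\cdot\mathfrak g^{-1},t)$ is also correct. The one place you treat much more loosely than the estimates is the global Lipschitz and $L_1^p$ regularity of $F_{\bar{*}}$: appealing to ``the uniform $L^\infty$ bound together with the gradient estimates'' does not by itself give a pointwise Lipschitz bound near critical radii or near the poles of $S^m_V$, and a careful proof needs the Morse normal form to verify that $\partial_t\mu_F$ and the derivative of the cap-volume function degenerate compatibly there. That is a genuine gap in your write-up, though not in the overall strategy.
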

Furthermore the spherically-rearranging operation as a map between function spaces is also Lipschitz continuous.
\begin{Theorem}[\cite{Sung}]\label{Firstlady}
Suppose $(M^m,g)$ and $(N^n,h)$ are smooth closed Riemannian manifolds such that $M$ has volume $V$ and $Ric_g>0$.
Let $$\mathcal{F}:=\{F:N\times M\stackrel{C^\infty}{\rightarrow} \Bbb R|F \textrm{ is a generic-fiberwise Morse function.}\}$$ and $$\tilde{\mathcal{F}}:=\{F_{\bar{*}}:N\times S^m_V\rightarrow \Bbb R|F\in \mathcal{F}\}$$ be endowed with $C^0$-topology.
Then the spherically-rearranging map $\Phi:\mathcal{F}\rightarrow \tilde{\mathcal{F}}$  given by $\Phi(F)=F_{\bar{*}}$ is Lipschitz continuous with Lipschitz constant 1,
i.e. for any $F, G\in \mathcal{F}$   $$||G_{\bar{*}}-F_{\bar{*}}||_\infty\leq ||G-F||_\infty.$$
\end{Theorem}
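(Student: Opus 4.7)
The plan is to reduce the Lipschitz bound to two elementary properties of fiberwise spherical rearrangement: translation invariance under adding a constant, and monotonicity with respect to the pointwise order of functions. Set $\epsilon:=||G-F||_\infty$. Then $F\leq G+\epsilon$ everywhere on $N\times M$, and since adding a constant to $G$ changes neither critical points nor their indices on any fiber, $G+\epsilon$ is still a generic-fiberwise Morse function with $N_0(G+\epsilon)=N_0(G)$. From the defining equimeasurability with level $t$ replaced by $t+\epsilon$, one immediately reads off that on each fiber $(G+\epsilon)_{\bar{*}}(s,\cdot)=G_{\bar{*}}(s,\cdot)+\epsilon$.

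The core step is the monotonicity claim: if $F(s,\cdot)\leq H(s,\cdot)$ pointwise on $M$ for some $s\in N_0(F)\cap N_0(H)$, then $F_{\bar{*}}(s,\cdot)\leq H_{\bar{*}}(s,\cdot)$ pointwise on $S^m_V$. Fix $t\in\Bbb R$: the inclusion $\{x:H(s,x)<t\}\subseteq\{x:F(s,x)<t\}$ gives $\mu(\{x:H(s,x)<t\})\leq\mu(\{x:F(s,x)<t\})$, which by equimeasurability transfers to $\mu(\{y:H_{\bar{*}}(s,y)<t\})\leq\mu(\{y:F_{\bar{*}}(s,y)<t\})$. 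The step I expect to require the most care is the passage from this measure inequality of sublevel sets to an actual set-theoretic inclusion, which uses that each sublevel set of a continuous radially non-decreasing function on the sphere is exactly an open geodesic ball around the south pole, so the measure inequality determines these balls by their radii; this structure is supplied by Theorem~\ref{main-estimates}. Once the inclusion $\{y:H_{\bar{*}}(s,y)<t\}\subseteq\{y:F_{\bar{*}}(s,y)<t\}$ is established for every $t$, it is equivalent to $F_{\bar{*}}(s,\cdot)\leq H_{\bar{*}}(s,\cdot)$ pointwise.

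Applying the two observations with $H=G+\epsilon$ yields $F_{\bar{*}}\leq G_{\bar{*}}+\epsilon$ on $(N_0(F)\cap N_0(G))\times S^m_V$, and swapping the roles of $F$ and $G$ gives the opposite inequality, hence $|F_{\bar{*}}-G_{\bar{*}}|\leq\epsilon$ there. Since the complement of $N_0(F)\cap N_0(G)$ in $N$ has measure zero while $F_{\bar{*}}$ and $G_{\bar{*}}$ extend to Lipschitz continuous functions on all of $N\times S^m_V$ by Theorem~\ref{main-estimates}, the bound propagates by continuity to every point of $N\times S^m_V$, giving $||G_{\bar{*}}-F_{\bar{*}}||_\infty\leq||G-F||_\infty$.
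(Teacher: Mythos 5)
Your argument is correct, and it is the standard proof of the non-expansiveness of rearrangement in the sup norm (the paper refers to the cited reference for the proof, which is not reproduced here, but this translation-plus-monotonicity argument is the natural route and, consistent with your proof, never uses the hypothesis $Ric_g>0$ — exactly the observation the paper makes after the statement). Each step is sound: adding a constant to $F$ shifts the defining distribution function and hence shifts $F_{\bar{*}}$ by the same constant; the inclusion of sublevel sets gives a measure inequality, which upgrades to a set inclusion because every open sublevel set of a continuous, radially non-decreasing function on $S^m_V$ is an open geodesic ball about the south pole; having the inclusion for all $t$ is equivalent to the pointwise order of the rearrangements; and the inequality extends from $(N_0(F)\cap N_0(G))\times S^m_V$ to all of $N\times S^m_V$ by the Lipschitz continuity furnished by Theorem~\ref{main-estimates} together with density of $N_0(F)\cap N_0(G)$.
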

In fact the above theorem holds without the condition $Ric_g>0$, since the condition was not used in the proof in \cite{Sung}. Anyway we shall use the above theorem only for the case of $Ric_g>0$.

Parallel stories work well with Euclidean and hyperbolic symmetrizations. For a smooth generic-fiberwise Morse function $F :N\times M \rightarrow \Bbb R$,  $$F_{\bar{\star}}:N_0\times D^m_V\rightarrow \Bbb R$$ is defined as the fiberwise Euclidean rearrangement of $F$; namely, for all $s\in N_0$ $F_{\bar{\star}}|_{\{s\}\times D^m_V}$ is a radially-symmetric continuous function with minimum at the origin and satisfies
\begin{eqnarray*}
\mu(\{p\in M| F(s,p)<t \})=\mu(\{q\in D^m_V| F_{\bar{\star}}(s,q) <t \})
\end{eqnarray*}
for all $t\in \Bbb R$. Then in the same way as
(\ref{libraryinNY}) we have
\begin{eqnarray*}
\int_{\{x\in M| a<F(s,x)<b \}}F(s,x)^p\ d\mu_g=\int_{\{y\in D^m_{V}|a<F_{\bar{\star}}(s,y)<b\}} F_{\bar{\star}}(s,y)^p\ d\mu_{g_{_{\Bbb E}}}
\end{eqnarray*}
for any $s\in N_0$, and also
\begin{Theorem}[\cite{Sung}]\label{Young&Hyuk}
Let $(M^m,g)$ with volume $V$ and $(N^n,h)$ be smooth closed Riemannian manifolds and $F:N\times M \rightarrow \Bbb R$ be a smooth generic-fiberwise Morse function.
Then $F_{\bar{\star}}$ extends to a unique radially-symmetric Lipschitz continuous $L_1^p$ function in $N\times \overline{D^m_V}$ for any $p\geq 1$ which is smooth on an open dense subset with measure-zero complement.

If an open subset $\mathfrak{B}\subset (M,g)$ is isometric to a bounded domain of $(\Bbb R^m, g_{_{\Bbb E}})$,
and $F^{-1}(-\infty,b]$ for $b\in \Bbb R$ is contained in $N\times \mathfrak{B}$, then for any $a<b$
\begin{eqnarray*}
\int_{F^{-1}_{\bar{\star}}(a,b]} |dF_{\bar{\star}}|^p\ d\mu_{h+g_{_{\Bbb E}}}
\leq
\int_{F^{-1}(a,b]} |dF|^p\ d\mu_{h+g}.
\end{eqnarray*}
\end{Theorem}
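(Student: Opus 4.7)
The plan is to mirror the structure of Theorem \ref{main-estimates}, substituting the classical Euclidean Schwarz symmetrization for the spherical rearrangement, and exploiting the fact that the Euclidean isoperimetric inequality is sharp so that no distortion constant appears.

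First I would construct $F_{\bar{\star}}$ fiberwise. For every $s\in N_0(F)$, the function $F(s,\cdot):M\to \Bbb R$ is Morse, so its sublevel-set volumes are piecewise-smooth in the level $t$, and the radially-decreasing function $F_{\bar{\star}}(s,\cdot)$ on $D^m_V$ whose sublevel volumes match is determined and piecewise-smooth in the radial coordinate. To extend $F_{\bar{\star}}$ to the singular set $N\setminus N_0$, I would invoke the Euclidean analogue of Theorem \ref{Firstlady}: the Schwarz rearrangement is $C^0$-Lipschitz with constant $1$ (this is a direct consequence of Chebyshev's sum inequality applied to the distribution functions), so $F_{\bar{\star}}(s,\cdot)$ varies continuously in $s\in N$ and the extension to $N\times\overline{D^m_V}$ is unique, radially symmetric, and Lipschitz. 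The $L_1^p$ regularity and smoothness on an open dense set of full measure are then inherited exactly as in \cite{Sung}.

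For the key gradient inequality, the hypothesis $F^{-1}(-\infty,b]\subset N\times\mathfrak{B}$ with $\mathfrak{B}$ isometric to a Euclidean domain means that on every fiber $\{s\}\times M$ the entire sublevel set $\{F(s,\cdot)\leq b\}$ sits inside a flat region. Working fiberwise on $N_0(F)$, I would first establish the two directional Polya--Szego estimates analogous to those of Theorem \ref{main-estimates}, namely, for any $s\in N_0(F)$, $v\in T_sN$, and any $a<b$,
\begin{equation*}
\int_{F_{\bar{\star}}^{-1}(a,b]\cap(\{s\}\times D^m_V)}|v(F_{\bar{\star}})|^p\, d\mu_{g_{_{\Bbb E}}}\ \leq\ \int_{F^{-1}(a,b]\cap(\{s\}\times M)}|v(F)|^p\, d\mu_{g},
\end{equation*}
\begin{equation*}
\int_{F_{\bar{\star}}^{-1}(a,b]\cap(\{s\}\times D^m_V)}|d^E F_{\bar{\star}}|^p\, d\mu_{g_{_{\Bbb E}}}\ \leq\ \int_{F^{-1}(a,b]\cap(\{s\}\times M)}|d^M F|^p\, d\mu_{g}.
\end{equation*}
The horizontal estimate is proved by differentiating the defining distribution identity under the integral sign as in Theorem \ref{main-estimates}. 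The vertical estimate is the classical Euclidean Polya--Szego inequality applied on the Euclidean fiber, where (since the relevant sublevel set lies in $\mathfrak{B}$) no curvature correction is needed and the constant is exactly $1$.

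The main obstacle is upgrading these two separate directional bounds to a single estimate on $|dF_{\bar{\star}}|^p=(|d^N F_{\bar{\star}}|^2+|d^E F_{\bar{\star}}|^2)^{p/2}$. For $p=2$ this is immediate by orthogonality and Fubini. For general $p\geq 1$ I would argue pointwise on the smooth stratum: at each regular point $(s,q)$ of $F_{\bar{\star}}$, the radial structure of $F_{\bar{\star}}(s,\cdot)$ together with the coarea formula lets one express $|d^E F_{\bar{\star}}|$ and $|d^N F_{\bar{\star}}|$ in terms of the fiber volume function $V(s,t):=\mu(\{F(s,\cdot)\leq t\})$ and its derivatives in $s$ and $t$; the same coarea decomposition applied on the $M$-fiber yields matching expressions involving $|dF|$ on level sets. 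The Euclidean isoperimetric inequality (which, together with H\"older, is what underlies the vertical Polya--Szego step) then shows that the $D^m_V$-expression is pointwise dominated by the $M$-expression, and integrating against $d\mu_h\, dt$ via Fubini converts this into the claimed inequality between integrals of $|dF_{\bar{\star}}|^p$ and $|dF|^p$. Finally, the inequality extends from $N_0(F)$ to all of $N$ because the complement has $h$-measure zero.
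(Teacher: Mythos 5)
The paper does not prove Theorem \ref{Young&Hyuk}; it is quoted from \cite{Sung} and used as a black box, so there is no in-paper proof to compare against. Evaluating your proposal on its own, the plan is broadly sound — fiberwise rearrangement on $N_0(F)$, extension by Lipschitz continuity of the rearranging operation, and coarea plus isoperimetric plus convexity for the gradient bound — and you are correct to flag that the two separate directional Polya--Szeg\H{o} estimates do not add up to control $(|d^NF_{\bar{\star}}|^2+|d^EF_{\bar{\star}}|^2)^{p/2}$ once $p\neq 2$.

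The replacement step you offer is, however, phrased as a ``pointwise'' domination of the $D^m_V$-expression by the $M$-expression, and that cannot be literally correct: rearrangement gives no pointwise correspondence between points of $D^m_V$ and points of $M$, so there is nothing to dominate pointwise. What the fiberwise coarea formula actually delivers is a comparison of level-set integrals. Writing $\Sigma_{s,t}:=\{F(s,\cdot)=t\}$, $d\nu:=|d^MF|^{-1}d\sigma$, and $P^*(s,t)$ for the Euclidean ball perimeter at volume $V(s,t)$, the coarea integrand for $F_{\bar{\star}}$ at level $(s,t)$ is $\bigl((P^*)^2+|d^NV|_h^2\bigr)^{p/2}(\partial_tV)^{1-p}$, while that for $F$ is $\int_{\Sigma_{s,t}}(|d^NF|^2+|d^MF|^2)^{p/2}\,d\nu$. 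Bounding the former by the latter uses, simultaneously, three facts: $P^*\leq\int_{\Sigma_{s,t}}|d^MF|\,d\nu$ (isoperimetric), $|d^NV|_h\leq\int_{\Sigma_{s,t}}|d^NF|\,d\nu$ (differentiating the volume under the integral), and Jensen's inequality for the convex two-variable function $(a,b)\mapsto(a^2+b^2)^{p/2}$ against the probability measure $d\nu/\partial_tV$ (equivalently Minkowski's integral inequality followed by the scalar power inequality). That last convexity step is the whole content of passing from $p=2$ to general $p\geq1$; ``isoperimetric plus H\"older'' runs the one-variable, purely fiberwise Polya--Szeg\H{o} but does not, on its own, give the combined estimate. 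Supplying this Jensen/Minkowski step is what your write-up is missing, and without it the claimed domination is unsubstantiated.
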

In case of hyperbolic symmetrization, one has to use the Poincar\'{e} $m$-disk $(\Bbb H^m,g_{_{\Bbb H}})$ and the open $m$-disk $\mathfrak{D}^m_V\subset \Bbb H^m$ with center at the origin and hyperbolic volume $V$. Then the  statement analogous to the above theorem still holds for the fiberwise hyperbolic arrangement of $F$. (Of course, now $\mathfrak{B}\subset (M,g)$ is supposed to be isometric to a bounded domain in $(\Bbb H^m,g_{_{\Bbb H}})$.)

\section{$\lambda_i$ comparison on fiber bundles}

In this section we introduce a ``canonical" $G$-invariant differential form $\Omega_G$ of degree $\dim G$ on a principal $G$-bundle $P$ with a connection and a bi-invariant metric on $G$. It acts as a fiberwise volume form on $P$ and will play a pivotal role in computing the Rayleigh quotient of a fiber bundle associated to $P$.

\subsection{Introducing $\Omega_G$ on a principal $G$-bundle}
The purpose of introducing $\Omega_G$ is in proving Lemma \ref{Woody&Jessie} which is the main goal of this subsection.
Let's start with a quick review of a Riemannian submersion.
Let $$\pi: (X,g_{_X})\rightarrow (B,g_{_B})$$ be a Riemannian submersion with totally geodesic fibers and $H$ be its horizontal distribution. We assume that $B$ and fiber $F$ are smooth closed manifolds of positive dimensions $\dim B=n$ and $\dim F=m$ respectively.

It is well known (\cite{Besse,Hermann}) that all fibers are diffeomorphic via the ``parallel transport" induced by $H$ so that $X$ is an $F$-fiber bundle with a structure group $G$ given by the ``holonomy" group of the parallel transport. Since all the fibers are totally geodesic, $G$ is a compact Lie group consisting of isometries of $(F,g_{_F})$. Thus there exists a smooth principal $G$-bundle $P$ with $$\pi_{_P}: P\rightarrow B$$ such that $X$ is the associated bundle of $P$ with $H$ being induced by the horizontal distribution $\mathcal{H}$ of a $G$-connection on $P$.
Namely $X$ is diffeomorphic to the quotient of $P\times F$ by the free right $G$-action given by $$(p,f)\cdot g=(p\cdot g,g^{-1}\cdot f)$$ for $g\in G$. We denote this quotient map by $$\tilde{\pi}:P\times F\rightarrow X.$$

Using a local trivialization of $P$, $\tilde{\pi}:U_\alpha\times G\times F\rightarrow U_\alpha\times F$ is expressed as
\begin{eqnarray}\label{yatap}
\tilde{\pi}(x,g,f)=(x,g\cdot f).
\end{eqnarray}
For another trivialization over $U_\beta\subset B$ there exists a smooth transition function $g_{\alpha\beta}:U_\alpha\cap U_\beta\rightarrow G$ identifying $$(x,g,f)\in U_\alpha\times G\times F\ \ \ \ \textrm{and}\ \ \ \ (x,g_{\alpha\beta}(x)g,f)\in U_\beta\times G\times F,$$ and it projects down to the identification of $$(x,g\cdot f)\in U_\alpha\times F  \ \ \ \ \textrm{and}\ \ \ \   (x,g_{\alpha\beta}(x)\cdot (g\cdot f))\in U_\beta\times F.$$
In this local trivialization $U_\alpha\times F$ of $X$ associated to a local trivialization of $P$, a horizontal curve of $X$ is written as
\begin{eqnarray}\label{yatapgo}
\{(x(t),g(t)\cdot f)|g(t)\in G, t\in [0,1]\}
\end{eqnarray}
which induces a horizontal curve $\{(x(t),g(t))|t\in [0,1]\}$ in $U_\alpha\times G$, thereby producing a $G$-invariant (horizontal) distribution $\mathcal{H}$, i.e. a connection in $P$.

Let's see how the metric $g_{_X}$ is expressed in the above local trivialization $U_\alpha\times F$.
Since the parallel transport of $\{x(0)\}\times F$ to  $\{x(1)\}\times F$ given by $$f\mapsto g(1)\cdot f$$ is isometric, the metric restricted to $\{x(1)\}\times F$ must be equal to the metric restricted to $\{x(0)\}\times F$. Thus let us express the metric $g_{_X}$ as
\begin{eqnarray}\label{jackpot}
g_{_X}=\pi^*(g_{_B})|_H\oplus g_{_F}|_{H^\perp}
\end{eqnarray}
by which we mean that in an appropriate local trivialization $TF$ is endowed with metric $g_{_F}$ and its orthogonal complement $H$ is endowed with metric $\pi^*(g_{_B})$. We shall use this notation to express such a metric.

In fact any smooth principal $G$-bundle $P$ with a $G$-connection and a $G$-invariant metric $g_{_F}$ on $F$ give rise to a Riemannian submersion metric (\ref{jackpot}) on the associated bundle via the above correspondence of $H$ and $\mathcal{H}$ such that all of its fibers are totally geodesic.(\cite{Besse, Vilms})

Note that $P$ as a $G$-fiber bundle is the associated bundle of a principal bundle $P$ itself. So it can be endowed with a Riemannian submersion over $(B,g_{_B})$ with horizontal distribution induced by the horizontal distribution $\mathcal{H}$ as a principal $G$-connection of $P$. In fact those two distributions coincide, because the horizontal distribution given by curves $(x,g(t)\cdot g)$ as in (\ref{yatapgo}) is just $\mathcal{H}$ which is invariant under the right $G$ action.
When $G$ has positive dimension, we put a bi-invariant metric $g_{_G}$ on $G$, and the metric $$g_{_P}:=\pi^*(g_{_B})|_{\mathcal{H}}\oplus g_{_G}|_{\mathcal{H}^\perp}$$ on $P$ given as in (\ref{jackpot}) makes $\pi_{_P}$ a Riemannian submersion with totally geodesic fibers over $(B,g_{_B})$. By the right $G$-invariance of $\mathcal{H}$ and $g_{_G}$, $g_{_P}$ is invariant under the right $G$-action. If $G$ is finite, then $\pi_{_P}$ is a covering projection and $g_{_P}$ is defined as the pull-back metric $\pi_{_P}^*(g_{_B})$.

Henceforth we put $g_{_P}$ on $P$ unless otherwise specified, and set $\ell:=\dim (G)$. Choose any orientation of $G$, and then the following $\ell$-form on $P$ shall play an important role in proving Theorem \ref{Main-theorem}.
\begin{Proposition}
There exists a smooth $G$-invariant $\ell$-form $\Omega_G$ on $P$ such that $\Omega_G$ restricted to each fiber of $\pi_{_P}$ is the volume form of the fiber and the interior product $\iota_v\Omega_G$ is 0 for any horizontal vector $v$ on $P$.
\end{Proposition}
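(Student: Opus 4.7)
The plan is to define $\Omega_G$ as the fiberwise Riemannian volume form of the vertical distribution $\mathcal{V}:=\mathcal{H}^\perp$, extended to all of $TP$ by requiring it to vanish upon insertion of any horizontal vector. Let $\omega\in\Lambda^\ell\mathfrak{g}^*$ be the top-degree element on $\mathfrak{g}:=T_eG$ determined by the chosen orientation of $G$ together with the $\mathrm{Ad}$-invariant inner product induced by $g_{_G}$. At each $p\in P$ the fundamental vector field map $\xi\mapsto\xi^*(p):=\frac{d}{dt}\big|_{t=0}p\cdot\exp(t\xi)$ is a linear isomorphism $\mathfrak{g}\to\mathcal{V}_p$ that pushes $\omega$ forward to some $\omega_p\in\Lambda^\ell\mathcal{V}_p^*$, and I would set
\[
\Omega_G(p)(v_1,\dots,v_\ell):=\omega_p\bigl(\mathrm{pr}_{\mathcal{V}}v_1,\dots,\mathrm{pr}_{\mathcal{V}}v_\ell\bigr),
\]
where $\mathrm{pr}_{\mathcal{V}}:TP\to\mathcal{V}$ is projection along $\mathcal{H}$.

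Smoothness of $\Omega_G$ follows from the smoothness of $\mathcal{H}$ and of the fundamental vector fields: fixing an orthonormal basis $\xi_1,\dots,\xi_\ell$ of $\mathfrak{g}$, the $\xi_i^*$ form a global smooth vertical orthonormal frame on $P$ (each fiber is isometric to $(G,g_{_G})$ via $g\mapsto p\cdot g$, under which $\xi^*$ corresponds to the left-invariant field extending $\xi$, and left-invariance of $g_{_G}$ makes $\xi_i^*$ orthonormal along the whole fiber). Their horizontal-vanishing duals $\eta_1,\dots,\eta_\ell$ then realize $\Omega_G$ globally as $\pm\,\eta_1\wedge\cdots\wedge\eta_\ell$, with sign pinned down by the chosen orientation. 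The identity $\iota_v\Omega_G=0$ for horizontal $v$ is immediate from $\mathrm{pr}_{\mathcal{V}}v=0$, and on a fiber $\pi_{_P}^{-1}(b)$ the projection restricts to the identity, so $\Omega_G$ restricts to the Riemannian volume form of the fiber.

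For right-$G$-invariance, the principal-connection axiom gives $(R_g)_*\mathcal{H}=\mathcal{H}$, hence $(R_g)_*\mathcal{V}=\mathcal{V}$ and $(R_g)_*\circ\mathrm{pr}_{\mathcal{V}}=\mathrm{pr}_{\mathcal{V}}\circ(R_g)_*$. A direct computation from the definition of fundamental vector field yields $(R_g)_*\xi^*(p)=(\mathrm{Ad}_{g^{-1}}\xi)^*(p\cdot g)$, so under the identifications $\mathcal{V}_p\cong\mathfrak{g}\cong\mathcal{V}_{p\cdot g}$ the induced map $(R_g)_*$ on vertical subspaces is $\mathrm{Ad}_{g^{-1}}$. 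Bi-invariance of $g_{_G}$ makes $\mathrm{Ad}_{g^{-1}}$ orthogonal on $\mathfrak{g}$, and compactness of $G$ forces the continuous character $g\mapsto\det\mathrm{Ad}_g\in\{\pm 1\}$ to equal $+1$ on the identity component; thus $\mathrm{Ad}_{g^{-1}}$ preserves the oriented volume $\omega$, which gives $R_g^*\Omega_G=\Omega_G$.

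The step I expect to need the most care is this last $G$-invariance check: orthogonality of $\mathrm{Ad}_{g^{-1}}$ is automatic from bi-invariance, but preservation of the oriented $\omega$ (not merely up to sign) requires $\det\mathrm{Ad}_g=+1$, which is standard on the identity component and can be arranged globally by choosing an orientation of $G$ compatibly with the orientation of $\mathfrak{g}$. The degenerate case $\ell=0$ (finite $G$) is trivial with $\Omega_G\equiv 1$ and no connection is needed.
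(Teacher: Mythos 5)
Your construction and the paper's are in fact the same object in different clothing. The paper sets $\Omega_G=\textrm{Alt}(\mathfrak{F}(\vartheta,\cdots,\vartheta))=\vartheta_1\wedge\cdots\wedge\vartheta_\ell$ using the connection $1$-form $\vartheta$ and the $\mathrm{Ad}(G)$-invariant top form $\mathfrak{F}$ on $\mathcal{G}$; since $\vartheta$ annihilates $\mathcal{H}$ and on $\mathcal{V}=\mathcal{H}^\perp$ is exactly the inverse of your fundamental-vector-field isomorphism, your $\omega_p\circ(\mathrm{pr}_{\mathcal{V}})^{\wedge\ell}$ is identical to $\mathfrak{F}(\vartheta,\cdots,\vartheta)$ pointwise. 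The verifications also line up: the paper reads $\iota_v\Omega_G=0$ from $\iota_v\vartheta=0$, proves the fiber restriction by evaluating $\Omega_G(e_1^*,\cdots,e_\ell^*)=1$, and checks $G$-invariance from $R_g^*\vartheta=\mathrm{Ad}(g^{-1})\cdot\vartheta$ together with $\mathrm{Ad}$-invariance of $\mathfrak{F}$.

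The subtlety you flagged at the end is genuine, but your proposed repair does not close it. The computation (yours and the paper's) only yields $R_g^*\Omega_G=\det(\mathrm{Ad}_{g^{-1}})\,\Omega_G$, and for a disconnected compact $G$ the character $\det\mathrm{Ad}_g$ can be $-1$ on some components (e.g.\ $G=O(2)$ acting on $\mathfrak{so}(2)\cong\Bbb R$ by $\pm1$). This is a property of the group, not of any orientation choice: no reorientation of $G$ or of $\mathfrak{g}$ changes the determinant of $\mathrm{Ad}_g$, so ``compatibly chosen orientations'' cannot force $\det\mathrm{Ad}_g=+1$. Consequently the claim that $\mathrm{Ad}_{g^{-1}}$ preserves the \emph{oriented} $\omega$ (and the paper's parallel claim that the bi-invariant inner product ``induces an $\mathrm{Ad}(G)$-invariant volume form'') holds automatically only when $\mathrm{Ad}(G)\subset SO(\mathcal{G})$, e.g.\ when $G$ is connected. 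The paper shares this gap; it is harmless for the later uses (Lemmas~3.3--3.7), where $\Omega_G$ enters only through $|\alpha\wedge\Omega_G|$ or a fiber integral whose orientation is fixed by $\Omega_G$ itself, but as stated the Proposition needs either that hypothesis, or replacement of ``$G$-invariant $\ell$-form'' by ``$G$-invariant $\ell$-density $|\Omega_G|$.''
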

\begin{proof}
If $\ell=0$, then the 0-form $\Omega_G$ is 1 and there is nothing to prove.

We now assume that $\ell>0$, and  first need to prove the volume form on $G$ can be extended to a global form on $P$.
The bi-invariant metric $g_{_G}$ of $G$ gives an $Ad(G)$-invariant inner product on the Lie algebra $\mathcal{G}:=T_eG$, which in turn induces an $Ad(G)$-invariant volume form $\mathfrak{F}$ for $\mathcal{G}$. Let $e_1,\cdots,e_\ell$ be an orthonormal basis of $\mathcal{G}$ so that $\mathfrak{F}(e_1,\cdots,e_\ell)=1$,
and $\vartheta:TP\rightarrow \mathcal{G}$ be the connection 1-form on $P$ corresponding to $\mathcal{H}$. Then we define $$\Omega_G:=\textrm{Alt}(\mathfrak{F}(\vartheta,\cdots,\vartheta))$$ which is the alternation\footnote{The alternation $\textrm{Alt}(T)$ of a multi-linear map $T:\prod_{i=1}^\ell V\rightarrow \Bbb R$ is defined as $$\textrm{Alt}(T)(v_1,\cdots,v_\ell)=
\frac{1}{\ell !}\sum_{\sigma\in S_\ell}sgn(\sigma)T(v_{\sigma(1)},\cdots,v_{\sigma(\ell)})
$$ for any $v_1,\cdots,v_\ell\in V$.} of a multi-linear map $$\mathfrak{F}(\vartheta,\cdots,\vartheta):\prod_{i=1}^\ell TP\rightarrow \Bbb R.$$
To see more clearly that the above-defined $\Omega_G$ is in $\Lambda^{\ell}P$, writing $\vartheta$ as $\sum_{i=1}^\ell\vartheta_ie_i$ for $\vartheta_i\in \Lambda^1(P)$ and plugging it into the above definition, we have that
\begin{eqnarray*}
\Omega_G &=&\sum_{\sigma\in S_\ell}\textrm{Alt}(\vartheta_{\sigma(1)}\otimes\cdots\otimes \vartheta_{\sigma(\ell)})\mathfrak{F}(e_{\sigma(1)},\cdots,e_{\sigma(\ell)})\\
&=&\frac{1}{\ell !}\sum_{\sigma\in S_\ell}\vartheta_{\sigma(1)}\wedge\cdots\wedge\vartheta_{\sigma(\ell)}\mathfrak{F}(e_{\sigma(1)},\cdots,e_{\sigma(\ell)})\\
&=& \frac{1}{\ell !}\sum_{\sigma\in S_\ell}\vartheta_1\wedge\cdots\wedge\vartheta_\ell\  \mathfrak{F}(e_1,\cdots,e_\ell)\\ &=& \vartheta_1\wedge\cdots\wedge\vartheta_\ell.
\end{eqnarray*}

Its invariance under the $G$-action follows from
\begin{eqnarray*}
R_g^*(\Omega_G)&=&\textrm{Alt}(\mathfrak{F}(R_g^*\vartheta,\cdots,R_g^*\vartheta))\\ &=&\textrm{Alt}(\mathfrak{F}(Ad(g^{-1})\cdot\vartheta,\cdots,Ad(g^{-1})\cdot\vartheta))\\ &=& \textrm{Alt}(\mathfrak{F}(\vartheta,\cdots,\vartheta))\\ &=& \Omega_G,
\end{eqnarray*}
and $\iota_v\Omega_G=0$ holds for any $v\in \mathcal{H}$ because $\imath_v\vartheta=0$ for horizontal $v$.

Recall that the fundamental vector fields $e_1^*,\cdots,e_\ell^*$ on $P$ defined by $$e_i^*(p):=\frac{d}{dt}|_{t=0}(p\cdot exp(te_i))\in T_pP$$ for $p\in P$ are the left-invariant extensions of $e_1,\cdots,e_\ell\in T_eG$ in a local trivialization. So they are orthonormal at each point, and
\begin{eqnarray*}
\Omega_G(e_1^*,\cdots,e_\ell^*)&=& \frac{1}{\ell !}\sum_{\sigma\in S_\ell}sgn(\sigma)\mathfrak{F}(\vartheta(e^*_{\sigma(1)}),\cdots,\vartheta(e^*_{\sigma(\ell)}))\\ &=& \frac{1}{\ell !}\sum_{\sigma\in S_\ell}sgn(\sigma)\mathfrak{F}(e_{\sigma(1)},\cdots,e_{\sigma(\ell)})\\&=& \mathfrak{F}(e_{1},\cdots,e_{\ell})\\ &=& 1
\end{eqnarray*}
proving that $\Omega_G$ gives the volume form on each fiber of $P$. \footnote{We remark that $\Omega_G$ is not closed for a general $\mathcal{H}$, but obviously closed if $[\mathcal{H},\mathcal{H}]\subset \mathcal{H}$, i.e. the connection is flat.}
\end{proof}

By abuse of notation, $\mathcal{H}$ and $\mathfrak{D}(G):=\mathcal{H}^\perp$ on $(P,g_{_P})$ also denote obvious lifted distributions on $P\times F$ via the standard projection map $p_1:P\times F\rightarrow P$, and we put the product metric $g_{_P}+g_{_F}$ on $P\times F$. Despite the $G$-invariance of $g_{_P}+g_{_F}$,
$$\tilde{\pi}: (P\times F,g_{_P}+g_{_F})\rightarrow (X,g_{_X})$$ is not a Riemannian submersion in general as seen in :
\begin{Lemma}\label{tram}
Let $\mathfrak{D}(F)$  and $\tilde{\mathfrak{D}}(G)$ be smooth distributions of $P\times F$ consisting of tangent vectors of each $\{pt\}\times F$ and orbits of the $G$-action on $P\times F$ respectively.
Then the $G$-invariant orthogonal direct sum $\mathcal{H}\oplus \mathfrak{D}(F)$ is orthogonal to $\mathfrak{D}(G)$, but not orthogonal to $\tilde{\mathfrak{D}}(G)$ in general, and the differential $$d\tilde{\pi}: \mathcal{H}\oplus \mathfrak{D}(F)\rightarrow TX$$  at any point of $P\times F$ is an isometric isomorphism.

For any differential form $\alpha$ on $X$,
\begin{eqnarray}\label{shatter}
|(\tilde{\pi}^*\alpha)\wedge \Omega_G|_{g_{_P}+g_{_F}}=\tilde{\pi}^*|\alpha|_{g_{_X}}.
\end{eqnarray}
\end{Lemma}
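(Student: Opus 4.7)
The plan is to handle the three assertions of the lemma in order: the orthogonality claims, the isometric-isomorphism claim, and finally the pointwise identity (\ref{shatter}).

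First I would verify the orthogonality statements directly from the product structure of $g_{_P}+g_{_F}$. The lifted distribution $\mathfrak{D}(G)$ is supported in the $TP$-factor, while $\mathfrak{D}(F)$ is supported in the $TF$-factor, so they are automatically orthogonal; and $\mathcal{H}\perp \mathfrak{D}(G)$ already on $P$ by definition. This gives $\mathcal{H}\oplus\mathfrak{D}(F)\perp\mathfrak{D}(G)$. For the failure of orthogonality with $\tilde{\mathfrak{D}}(G)$, I would note that the infinitesimal generator at $(p,f)$ of the diagonal action $(p,f)\cdot g=(pg,g^{-1}f)$ associated to $e\in\mathcal{G}$ is of the form $(e^{*}(p),-e^{*}(f))$; its first component lies in $\mathfrak{D}(G)_{p}$ and its second in $\mathfrak{D}(F)_{(p,f)}$. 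The second component is generically nonzero (the $G$-action on $F$ need not be trivial), so pairing with a suitable vector of $\mathfrak{D}(F)$ produces a nonzero inner product.

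For the isometric isomorphism I would combine a kernel computation with a dimension count. If $(h,w)\in(\mathcal{H}\oplus\mathfrak{D}(F))\cap\tilde{\mathfrak{D}}(G)$, then $(h,w)=(e^{*}(p),-e^{*}(f))$ for some $e\in\mathcal{G}$, so $h=e^{*}(p)$ lies in $\mathcal{H}\cap\mathfrak{D}(G)=\{0\}$; since the $G$-action on $P$ is free, $e=0$, hence $(h,w)=0$. Because $\dim(\mathcal{H}\oplus\mathfrak{D}(F))=n+m=\dim X$, the restricted $d\tilde{\pi}$ is a linear isomorphism onto $T_{q}X$ at $q=\tilde\pi(p,f)$. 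For the isometry, I would work in a local trivialization $\tilde{\pi}(x,g,f)=(x,g\cdot f)$ and check the three blocks separately: on $\mathfrak{D}(F)$ the map $w\mapsto dg(w)$ is an isometry of $(F,g_{_F})$ because $g\in G$ acts by isometries; on $\mathcal{H}$ the identification $d\pi_{_P}:\mathcal{H}_{p}\to T_{x}B$ is an isometry and horizontal vectors are mapped to horizontal vectors of $(X,g_{_X})$, so comparison with (\ref{jackpot}) yields the required equality of norms; and the cross-inner-product is zero on both sides, since $H\perp H^{\perp}$ in $g_{_X}$ and $\mathcal{H}\perp\mathfrak{D}(F)$ in $g_{_P}+g_{_F}$.

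For the pointwise identity (\ref{shatter}), I would fix $(p,f)$ and choose an orthonormal frame $u_{1},\dots,u_{m},u_{m+1},\dots,u_{m+n},u_{m+n+1},\dots,u_{m+n+\ell}$ of $T_{(p,f)}(P\times F)$ adapted to the $g_{_P}+g_{_F}$-orthogonal splitting $\mathfrak{D}(F)\oplus\mathcal{H}\oplus\mathfrak{D}(G)$, with the last $\ell$ vectors oriented so that $\Omega_{G}=u^{*}_{m+n+1}\wedge\cdots\wedge u^{*}_{m+n+\ell}$ at this point (possible by the preceding proposition, which shows $\Omega_{G}$ restricts to the volume form of the $\mathfrak{D}(G)$-fiber). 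Setting $e_{i}:=d\tilde{\pi}(u_{i})$ for $1\leq i\leq m+n$ gives, by the second paragraph, an orthonormal frame of $T_{q}X$. For any $k$-form $\alpha$ on $X$, expand
\[
|(\tilde\pi^{*}\alpha)\wedge\Omega_{G}|^{2}=\sum_{|I|=k+\ell}\bigl((\tilde\pi^{*}\alpha)\wedge\Omega_{G}\bigr)(u_{I})^{2}.
\]
Only multi-indices $I=J\sqcup\{m+n+1,\dots,m+n+\ell\}$ with $J\subset\{1,\dots,m+n\}$ of cardinality $k$ contribute, because $\Omega_{G}$ vanishes whenever any of its arguments lies outside $\mathfrak{D}(G)$. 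For such $I$ the value is $\pm(\tilde\pi^{*}\alpha)(u_{J})=\pm\alpha(e_{J})$, so the sum collapses to $\sum_{|J|=k}\alpha(e_{J})^{2}=|\alpha|_{g_{_X}}^{2}(q)$, which is exactly $\tilde{\pi}^{*}|\alpha|_{g_{_X}}^{2}(p,f)$.

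The step that requires care is the last one: the kernel of $d\tilde\pi$ is $\tilde{\mathfrak{D}}(G)$, not $\mathfrak{D}(G)$, so $\tilde{\pi}^{*}\alpha$ does \emph{not} simply vanish on $\mathfrak{D}(G)$-vectors, and a direct computation of $|\tilde\pi^{*}\alpha|$ against the product metric would give the wrong answer. The role of $\Omega_{G}$ is precisely to excise the $\mathfrak{D}(G)$-directions via the wedge, which together with the orthogonality $(\mathcal{H}\oplus\mathfrak{D}(F))\perp\mathfrak{D}(G)$ established in the first paragraph makes the selection of surviving terms match the orthonormal decomposition of $|\alpha|_{g_{_X}}^{2}$.
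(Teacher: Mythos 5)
Your proof is correct and follows the same overall route as the paper: local orthogonality observations, then isometry of $d\tilde{\pi}$ on $\mathcal{H}\oplus\mathfrak{D}(F)$ verified blockwise in the local trivialization $\tilde{\pi}(x,g,f)=(x,g\cdot f)$, and finally (\ref{shatter}) deduced from that isometry together with the properties of $\Omega_G$. Where you go further than the paper is in making two implicit steps explicit and rigorous: (i) the paper asserts $d\tilde{\pi}$ is an isomorphism but does not verify injectivity, while you compute $(\mathcal{H}\oplus\mathfrak{D}(F))\cap\tilde{\mathfrak{D}}(G)=\{0\}$ using freeness of the $G$-action on $P$ and then close the argument with a dimension count; (ii) the paper dismisses (\ref{shatter}) with ``follows immediately,'' whereas you carry out the adapted-orthonormal-frame computation showing that wedging with $\Omega_G$ exactly excises the $\mathfrak{D}(G)$-directions, and you correctly flag the subtle point that $\ker d\tilde{\pi}=\tilde{\mathfrak{D}}(G)\neq\mathfrak{D}(G)$, so $\tilde{\pi}^*\alpha$ alone does not annihilate $\mathfrak{D}(G)$ and the wedge with $\Omega_G$ is genuinely doing work. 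These additions make your write-up a useful expansion of the paper's terse proof; the only cosmetic caveat is that you reuse the symbol $e^*$ for the fundamental vector field on $F$, whereas the paper reserves $e^*$ for the one on $P$ --- a different symbol for the $F$-side generator would avoid ambiguity.
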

\begin{proof}
The $G$-invariance of $\mathcal{H}\oplus \mathfrak{D}(F)$ and the orthogonalities
$$\mathcal{H}\perp \mathfrak{D}(F),\ \ \ \ \ \ \ \ (\mathcal{H}\oplus \mathfrak{D}(F))\perp \mathfrak{D}(G)$$ are obvious.
So $\mathcal{H}\oplus \mathfrak{D}(F)$ is not orthogonal to $\tilde{\mathfrak{D}}(G)$ unless all $G$-orbits of $F$ are zero-dimensional.

To prove the remaining parts, let's see it in the local trivialization (\ref{yatap}).
A local smooth curve $c(t):=(x(t),g(t),f)\in U_\alpha\times G\times F$ tangent to $\mathcal{H}$ gets mapped  by $\tilde{\pi}$ to $\tilde{\pi}(c(t))=(x(t),g(t)\cdot f)$ which is a horizontal curve of $X$, and both curves project down to a curve $x(t)$ in $B$
by $\pi_{_P}\circ p_1$ and $\pi$ respectively.
So
\begin{eqnarray*}
|c'(t)|_{g_{_P}+g_{_F}}&=&|(x'(t),g'(t))|_{g_{_P}}\\ &=&|x'(t)|_{g_{_B}}\\ &=&|\frac{d}{dt}\tilde{\pi}(c(t))|_{g_{_X}}
\end{eqnarray*}
where the 2nd equality holds because $(x'(t),g'(t))$ belongs to the horizontal distribution $\mathcal{H}$ of Riemannian submersion metric $g_{_P}$.

A local smooth curve $\gamma(t):=(x,g,f(t))\in U_\alpha\times G\times F$ tangent to $\mathfrak{D}(F)$ gets mapped to $\tilde{\pi}(\gamma(t))=(x,g\cdot f(t))$ which is a vertical curve, i.e. staying in a fiber of $X$, and
\begin{eqnarray*}
|\gamma'(t)|_{g_{_P}+g_{_F}}&=&|f'(t)|_{g_{_F}}\\ &=&|\frac{d}{dt}(g\cdot f(t))|_{g_{_F}}\\ &=& |\frac{d}{dt}\tilde{\pi}(\gamma(t))|_{g_{_X}},
\end{eqnarray*}
where the 2nd equality holds because the $G$-action on $F$ is isometric.

Since $d\tilde{\pi}(c'(t))$ is horizontal and  $d\tilde{\pi}(\gamma'(t))$ is vertical,
$$\langle d\tilde{\pi}(c'(t)),d\tilde{\pi}(\gamma'(t))\rangle = 0 = \langle c'(t),\gamma'(t)\rangle.$$
Now we have that $\mathcal{H}\oplus \mathfrak{D}(F)$ orthogonal to $\mathfrak{D}(G)$ gets mapped to $TX$ isometrically by $d\tilde{\pi}$. By combining it with the fact that $\Omega_G$ restricted to each $G$ fiber is the volume form such that $\iota_v\Omega_G$ is 0 for any horizontal vector $v$ on $P$, (\ref{shatter}) follows immediately.
\end{proof}

By abuse of notation, $\Omega_G$ on $P\times F$ shall mean $p_1^*\Omega_G$.
\begin{Lemma}\label{Chokuk4}
Suppose $B, F$, and $X$ are orientable, and put the product orientations on $X$, $P$ and $P\times F$ induced from orientations of $B, F$, and $G$.
Let $\omega_{P\times F}$ and $\omega_X$ be the volume forms of $g_{_P}+g_{_F}$ and $g_{_X}$ respectively. Then
$$\omega_{P\times F}=(-1)^{\ell m}\tilde{\pi}^*(\omega_X)\wedge  \Omega_G.$$
\end{Lemma}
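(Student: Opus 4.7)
The plan is to verify the identity pointwise by evaluating both $(\ell{+}n{+}m)$-forms on a single oriented orthonormal basis adapted to the decomposition $T_{(p,f)}(P\times F)=\mathcal{H}_p\oplus\mathfrak{D}(G)_p\oplus T_fF$. At a chosen point $(p,f)$ I would take an oriented orthonormal basis $h_1,\ldots,h_n$ of $\mathcal{H}_p$ whose image under $d\pi_{_P}$ is $B$-oriented, the fundamental vector fields $e_1^*,\ldots,e_\ell^*$ associated to an oriented orthonormal basis of $\mathcal{G}$ (orthonormal by bi-invariance of $g_{_G}$, exactly as in the preceding proof), and an oriented orthonormal basis $v_1,\ldots,v_m$ of $T_fF$. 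By the product orientation of $P\times F$ ($P$-orientation followed by $F$-orientation) together with the product orientation of $P$ ($B$ followed by $G$), the ordered tuple $(\bar h_1,\ldots,\bar h_n,\bar e_1^*,\ldots,\bar e_\ell^*,\bar v_1,\ldots,\bar v_m)$ is oriented orthonormal, so $\omega_{P\times F}$ evaluates to $1$ on it.

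Next I apply the shuffle formula to $\tilde\pi^*(\omega_X)\wedge\Omega_G$ on the same tuple. Since $p_1^*\Omega_G$ annihilates any vector in $\mathcal{H}$ (as $\iota_v\Omega_G=0$ for horizontal $v$) and any vector in $\mathfrak{D}(F)=\ker dp_1$, the only $(n{+}m,\ell)$-shuffle contributing is the one whose last $\ell$ slots receive exactly $(\bar e_1^*,\ldots,\bar e_\ell^*)$ in order. That shuffle commutes the middle block of $\ell$ $e$-vectors past the final block of $m$ $v$-vectors, contributing sign $(-1)^{\ell m}$, and on those slots $\Omega_G$ evaluates to $1$ by the identity already verified in the preceding proposition.

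It then remains to show $\tilde\pi^*(\omega_X)(\bar h_1,\ldots,\bar h_n,\bar v_1,\ldots,\bar v_m)=1$. By Lemma~\ref{tram}, $d\tilde\pi|_{\mathcal{H}\oplus\mathfrak{D}(F)}$ is an isometric isomorphism onto $T_{\tilde\pi(p,f)}X$, so the images form an orthonormal basis. They are $X$-oriented: using $\pi\circ\tilde\pi=\pi_{_P}\circ p_1$, the horizontal images project under $d\pi$ to the same $B$-oriented basis as $(h_i)$ does under $d\pi_{_P}$; on the vertical side $d\tilde\pi$ restricts in the local trivialization (\ref{yatap}) to $dL_g$, which preserves the $F$-orientation because the structure group acts on $F$ by orientation-preserving isometries. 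Hence this factor equals $1$, the right-hand side evaluates to $(-1)^{\ell m}$ on our basis, and matching against $\omega_{P\times F}=1$ yields the identity of top-degree forms.

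The main obstacle is the orientation bookkeeping in this last step: the ``product orientation on $X$'' is only well-defined when $G$ acts on $F$ by orientation-preserving maps, a compatibility implicit in the orientation hypothesis. Once this is granted and one is careful about the ordering convention for the product orientations on $P$ and $P\times F$, everything else reduces to the combinatorial sign computation that produces the factor $(-1)^{\ell m}$.
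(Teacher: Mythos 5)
Your proof takes essentially the same route as the paper's: both evaluate the top-degree forms on an orthonormal frame adapted to the splitting $\mathcal{H}\oplus\mathfrak{D}(G)\oplus\mathfrak{D}(F)$, exploit that $\Omega_G$ annihilates $\mathcal{H}\oplus\mathfrak{D}(F)$ so only one shuffle survives, extract the sign $(-1)^{\ell m}$ from transposing the $\mathfrak{D}(G)$ block past the $\mathfrak{D}(F)$ block, and invoke Lemma~\ref{tram} to get $\tilde\pi^*(\omega_X)=1$ on the remaining slots. The one place you are slightly more careful than the paper is the final remark that the product orientation on $X$ is well-defined precisely because $G$ acts on $F$ by orientation-preserving isometries; the paper's proof asserts without comment that $d\tilde\pi$ maps the chosen frame to a positively-oriented frame of $X$.
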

\begin{proof}
Let $\{v_1,\cdots,v_n\}$, $\{v_{n+1},\cdots,v_{n+\ell}\}$, and $\{v_{n+\ell+ 1},\cdots,v_{n+\ell+m}\}$ be positively-oriented local orthonormal frames of
$\mathcal{H}$, $\mathfrak{D}(G)$, and $\mathfrak{D}(F)$ respectively. By the product orientation of $P$, we mean that $\{v_1,\cdots,v_{n+\ell}\}$
is a positively-oriented local orthonormal frame of $P$.
Similarly $$\{d\tilde{\pi}(v_1),\cdots,d\tilde{\pi}(v_n),d\tilde{\pi}(v_{n+\ell+1}),\cdots,d\tilde{\pi}(v_{n+\ell+m})\}$$ gives a positively-oriented local orthonormal frame on $X$ by the above lemma.
Using $\iota(v_j)\Omega_G=0$ for any $v_j\notin \{v_{n+1}\cdots,v_{n+\ell}\}$,
\begin{eqnarray*}
 & & (\tilde{\pi}^*(\omega_X)\wedge\Omega_G)(v_1,\cdots,v_{n+\ell+m})\\
&=& (-1)^{\ell m}(\tilde{\pi}^*(\omega_X)\wedge\Omega_G)(v_1,\cdots,v_n,v_{n+\ell+1},\cdots, v_{n+\ell+m},v_{n+1}\cdots,v_{n+\ell})\\
&=&(-1)^{\ell m} \omega_X(d\tilde{\pi}(v_1),\cdots,d\tilde{\pi}(v_n),d\tilde{\pi}(v_{n+\ell+1}),\cdots,d\tilde{\pi}(v_{n+\ell+m}))\ \Omega_G(v_{n+1}\cdots,v_{n+\ell})\\
&=& (-1)^{\ell m}
\end{eqnarray*}
which implies the desired equality.

\end{proof}

The following is a key lemma.
\begin{Lemma}\label{Woody&Jessie}
For any $\varphi\in L_1^2(X)$, $$\int_{P\times F} \tilde{\pi}^*\varphi\ d\mu_{g_{_P}+g_{_F}}=|G|\int_X \varphi\ d\mu_{g_{_X}}$$
$$\int_{P\times F} |d(\tilde{\pi}^*\varphi)\wedge \Omega_G|^2_{g_{_P}+g_{_F}} d\mu_{g_{_P}+g_{_F}}=|G|\int_X |d\varphi|^2_{g_{_X}}\ d\mu_{g_{_X}}.$$
\end{Lemma}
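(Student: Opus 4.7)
My strategy is to prove both equalities first for smooth $\varphi$ by an iterated Fubini argument, and then to extend to general $\varphi\in L_1^2(X)$ by density. For the first identity, I would apply Fubini on the Riemannian product $P\times F$ to integrate over $F$ before $P$. For each fixed $p\in P$, the map $f\mapsto\tilde{\pi}(p,f)$ reads $f\mapsto(x_0,g_0\cdot f)$ in the local trivialization (\ref{yatap}) with $p=(x_0,g_0)$, hence is an isometry from $(F,g_{_F})$ onto the fiber $X_{\pi_{_P}(p)}$ of $\pi$, since the $G$-action on $(F,g_{_F})$ is isometric and each fiber of $\pi$ carries the metric $g_{_F}$ by (\ref{jackpot}). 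The inner integral therefore equals $\int_{X_{\pi_{_P}(p)}}\varphi\,d\mu$, a function of $\pi_{_P}(p)$ alone. Applying Fubini for the Riemannian submersion $\pi_{_P}:(P,g_{_P})\to(B,g_{_B})$, whose fibers are totally geodesic and all of volume $|G|$, and then Fubini for the Riemannian submersion $\pi:(X,g_{_X})\to(B,g_{_B})$, collapses the remaining double integral to $|G|\int_X\varphi\,d\mu_{g_{_X}}$.

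\textbf{Second identity.} For smooth $\varphi$ one has $d(\tilde{\pi}^*\varphi)=\tilde{\pi}^*d\varphi$, so the pointwise identity (\ref{shatter}) of Lemma \ref{tram} applied to $\alpha=d\varphi$ yields
\[
|d(\tilde{\pi}^*\varphi)\wedge\Omega_G|^2_{g_{_P}+g_{_F}}=\tilde{\pi}^*|d\varphi|^2_{g_{_X}}.
\]
Integrating over $P\times F$ and invoking the first identity with $\varphi$ replaced by the nonnegative $L^1$ function $|d\varphi|^2_{g_{_X}}$ gives the second identity for smooth $\varphi$.

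\textbf{Density extension and main obstacle.} The first identity is a purely measure-theoretic Fubini statement, so it holds for every $\varphi\in L^1(X)$, and in particular for every $\varphi\in L_1^2(X)$. Because $\tilde{\pi}$ is a smooth submersion between closed manifolds, the pullback $\tilde{\pi}^*$ extends to a bounded map $L_1^2(X)\to L_1^2(P\times F)$ commuting with the weak exterior derivative; approximating $\varphi\in L_1^2(X)$ by $\varphi_n\in C^\infty(X)$, both sides of the second identity for $\varphi_n$ then converge to the corresponding sides for $\varphi$. The main step requiring care is the iterated Fubini computation for the first identity, whose crux is the isometric identification $f\mapsto\tilde{\pi}(p,f)$ between $(F,g_{_F})$ and the fibers of $\pi$ together with the Riemannian-submersion structure of $\pi_{_P}$ with fibers of constant volume $|G|$; once these are in hand, all else is bookkeeping.
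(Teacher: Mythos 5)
Your proof is correct, and for the first identity it takes a genuinely different (and arguably cleaner) route than the paper. The paper works directly on $P\times F$, viewing it locally as $\mathcal{U}\times G$ for $\mathcal{U}\subset X$, and performs fiber integration of $\tilde{\pi}^*(\varphi)\,\tilde{\pi}^*(\omega_{\mathcal U})\wedge\Omega_G$ along the $G$-orbits; the key computation there is Sublemma \ref{gotica}, showing $\int_{\mathfrak G}i^*\Omega_G=|G|$, so the role of $\Omega_G$ as the fiber volume form is made explicit. You instead split $P\times F$ as $F$-fibers over $P$ and then use the coarea/Fubini formula for the two Riemannian submersions $\pi_{_P}:P\to B$ (fibers isometric to $(G,g_{_G})$, volume $|G|$) and $\pi:X\to B$ (fibers isometric to $(F,g_{_F})$); the identification $f\mapsto\tilde{\pi}(p,f)$ being an isometry onto $X_{\pi_{_P}(p)}$ makes the two iterated integrals match. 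This avoids any explicit appeal to $\Omega_G$ in the first identity, relying instead on the standard fact that the volume form of a Riemannian submersion with totally geodesic fibers factors as base form times fiber form. The tradeoff is that the paper's argument sets up the machinery ($\Omega_G$, local orientations, Lemma \ref{Chokuk4}) that is reused later in the eigenvalue comparison, so its route is more self-contained in the paper's context. For the second identity you invoke the same pointwise identity (\ref{shatter}) as the paper and then apply the first identity to $\tilde{\pi}^*|d\varphi|_{g_X}^2$, which is exactly the paper's argument. Your density extension is a useful addition: the paper implicitly uses $d(\tilde{\pi}^*\varphi)=\tilde{\pi}^*(d\varphi)$ for general $\varphi\in L_1^2$ without comment, and your observation that $\tilde{\pi}^*$ is $L_1^2$-bounded (because $\tilde{\pi}$ is a submersion between compact manifolds, so $|d\tilde{\pi}|$ is bounded and the first identity controls the $L^2$ norm) plus density of $C^\infty(X)$ closes that gap cleanly.
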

\begin{proof}
To prove the 1st equality, it suffices to prove
$$\int_{\tilde{\pi}^{-1}(\mathcal{U})} \tilde{\pi}^*(\varphi)\ d\mu_{g_{_P}+g_{_F}}=|G|\int_{\mathcal{U}} \varphi\ d\mu_{g_{_X}}$$
for any open neighborhood $\mathcal{U}\subset X$ so small that it is orientable and $\tilde{\pi}^{-1}(\mathcal{U})$ is diffeomorphic to $\mathcal{U}\times G$.
Here we don't require the global orientability of $X$ and $P\times F$; however we take orientations of $\mathcal{U}$ and $\tilde{\pi}^{-1}(\mathcal{U})$ so that $\omega_{_{\mathcal{U}}}$ and $\tilde{\pi}^*(\omega_{_{\mathcal{U}}})\wedge\Omega_G$ are the local volume forms respectively. Recall that $|G|$ denotes the volume of $(G,g_{_G})$ when $G$ is an infinite group.
\begin{Sublemma}\label{gotica}
Let $\mathfrak{G}$ be any orbit of the $G$-action on $P\times F$ and $i:\mathfrak{G}\hookrightarrow P\times F$ be the inclusion map. Then $$\int_{\mathfrak{G}}i^*\Omega_G=|G|.$$
\end{Sublemma}
\begin{proof}
Under $p_1:P\times F\rightarrow P$, $\mathfrak{G}$ gets mapped diffeomorphically onto $p_1(\mathfrak{G})$ which is a fiber of the principal $G$-bundle $P$. So
\begin{eqnarray*}
\int_{\mathfrak{G}}i^*\Omega_G &=&\int_{\mathfrak{G}}i^*p_1^*\Omega_G \\ &=&\int_{\mathfrak{G}}(p_1\circ i)^*\Omega_G\\ &=& \int_{p_1(\mathfrak{G})}\Omega_G\\ &=& |G|.
\end{eqnarray*}
\end{proof}
Using that $\tilde{\pi}^{-1}(\mathcal{U})\simeq\mathcal{U}\times G$ and $\omega_{_{\mathcal{U}}}$ is a nowhere-vanishing $(n+m)$-form for $n+m=\dim X$,
\begin{eqnarray*}
\int_{\tilde{\pi}^{-1}(\mathcal{U})} \tilde{\pi}^*(\varphi)\ d\mu_{g_{_P}+g_{_F}}&=&
\int_{\tilde{\pi}^{-1}(\mathcal{U})} \tilde{\pi}^*(\varphi)\ \tilde{\pi}^*(\omega_{_{\mathcal{U}}})\wedge\Omega_G \\ &=&
\int_{\mathcal{U}}(\varphi\ \omega_{_{\mathcal{U}}}\int_{\mathfrak{G}} i^*\Omega_G) \\ &=& \int_{\mathcal{U}}\varphi\ d\mu_{g_{_X}}|G|
\end{eqnarray*}
where $i$ is the inclusion map of each $\mathfrak{G}=\{\textrm{pt}\}\times G\subset \mathcal{U}\times G$ and we performed the integration along fibers $\mathfrak{G}$ using Sublemma \ref{gotica}. So the proof of the 1st equality is done.

To prove the 2nd equality we use the already-proven 1st equality and (\ref{shatter}), and get
\begin{eqnarray*}
|G|\int_X|d\varphi|_{g_{_X}}^2\ d\mu_{g_{_X}} &=& \int_{P\times F} \tilde{\pi}^*(|d\varphi|_{g_{_X}}^2)\ d\mu_{g_{_P}+g_{_F}}\\&=& \int_{P\times F} |\tilde{\pi}^*(d\varphi)\wedge \Omega_G|_{g_{_P}+g_{_F}}^2\ d\mu_{g_{_P}+g_{_F}}\\&=&
\int_{P\times F} |d(\tilde{\pi}^*\varphi)\wedge \Omega_G|_{g_{_P}+g_{_F}}^2\ d\mu_{g_{_P}+g_{_F}}.
\end{eqnarray*}

\end{proof}

\subsection{Warped variation of Riemannian submersion}\label{Korean-nuclear}

There is a so-called \textit{canonical variation} of a Riemannian submersion which only deforms the scale of all fibers by the same conformal factor
while keeping anything else such as the horizontal distribution.
Generalizing it, we define a \textit{warped variation} of a Riemannian submersion on $(X, g_{_X})$ by a deformation of the scales of fibers where the rescaling factors of fibers may not be the same.
More precisely, for $\rho:=\check{\rho}\circ \pi$ where $\check{\rho}$ is a smooth positive function on $B$, a warped variation $g_{_X}^\rho$ of $g_{_X}$ is defined as $$g_{_X}^\rho|_{H^\perp}=\rho^2g_{_X}|_{H^\perp},\ \ \ \ g_{_X}^\rho|_{H}=g_{_X}|_{H},\ \ \ \ g_{_X}^\rho(v,w)=0$$ for any $v\in H^\perp$ and $w\in H$ where $H^\perp$ is the distribution given by fibers.

Since $\pi\circ \tilde{\pi}=\pi_{_P}\circ p_1$ for $p_1:P\times F\rightarrow P$,
$$\tilde{\pi}^*\rho=\check{\rho}\circ\pi\circ \tilde{\pi}=\check{\rho}\circ\pi_{_P}\circ p_1=p_1^*(\pi_{_P}^*\check{\rho}),$$
and hence $\tilde{\pi}^*\rho$ can be viewed as a function on $P$, which we denote shortly by $\tilde{\rho}$.
Obviously the warped product metric $g_{_P}+\tilde{\rho}^2g_{_F}$ is still $G$-invariant.
We also denote the norms of (co)tangent vectors of $X$ w.r.t. $g_{_X}^\rho$ by $|\cdot|_\rho$, and norms on $P\times F$ w.r.t. $g_{_P}+\tilde{\rho}^2g_{_F}$ by $|\cdot|_{\tilde{\rho}}$.

Lemma \ref{Woody&Jessie} is generalized to the following proposition which is the exact forms of equalities we shall use to prove Theorem \ref{Main-theorem}.
\begin{Proposition}\label{Matt-Damon}
For any $\varphi\in L_1^2(X)$, $$\int_{P\times F} \tilde{\pi}^*\varphi\ d\mu_{g_{_P}+\tilde{\rho}^2g_{_F}}=|G|\int_X \varphi\ d\mu_{g_{_X}^\rho}$$
$$\int_{P\times F} |d(\tilde{\pi}^*\varphi)\wedge \Omega_G|^2_{\tilde{\rho}}\ d\mu_{g_{_P}+\tilde{\rho}^2g_{_F}}=|G|\int_X |d\varphi|^2_\rho\ d\mu_{g_{_X}^\rho}.$$
\end{Proposition}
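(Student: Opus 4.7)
The plan is to reduce Proposition \ref{Matt-Damon} to Lemma \ref{Woody&Jessie} by exploiting two features of the warped variation: (i) the warping factor $\rho=\check{\rho}\circ\pi$ is constant along each fiber of $\pi$, and lifts to $\tilde{\rho}=\check{\rho}\circ\pi_{_P}\circ p_1$, which is constant along each $G$-orbit and each fiber $\{p\}\times F$ of $p_1$; (ii) the warping affects only the vertical distribution, which is exactly the part of the tangent bundle where $\Omega_G$ does not interact.

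First I would verify the natural analogue of Lemma \ref{tram} in the warped setting: the differential $d\tilde{\pi}\colon \mathcal{H}\oplus\mathfrak{D}(F)\to TX$ is still a pointwise isometric isomorphism when the source carries $g_{_P}+\tilde{\rho}^2g_{_F}$ and the target carries $g_{_X}^\rho$. On $\mathcal{H}$ nothing changes, so the horizontal computation from Lemma \ref{tram} is unaffected. On $\mathfrak{D}(F)$, for a curve $\gamma(t)=(x,g,f(t))$ the two tangent vectors to be compared have lengths $|\gamma'(t)|_{\tilde{\rho}^2g_{_F}}=\tilde{\rho}(x,g)\,|f'(t)|_{g_{_F}}$ and $|\tfrac{d}{dt}\tilde\pi(\gamma(t))|_{\rho^2 g_{_F}}=\rho(x,g\cdot f(t))\,|\tfrac{d}{dt}(g\cdot f(t))|_{g_{_F}}$; since both $\tilde{\rho}(x,g)$ and $\rho(x,g\cdot f(t))$ equal $\check{\rho}(x)$, and $G$ still acts isometrically on $(F,g_{_F})$, these lengths agree. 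Hence equation (\ref{shatter}) upgrades to
\begin{equation*}
|(\tilde{\pi}^*\alpha)\wedge\Omega_G|_{\tilde{\rho}}=\tilde{\pi}^*\,|\alpha|_{\rho}
\end{equation*}
for every differential form $\alpha$ on $X$, by exactly the argument in Lemma \ref{tram} (note $\Omega_G$ lives on the $\mathfrak{D}(G)$ directions, which carry the unchanged metric $g_{_G}$).

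Next I would record the elementary volume-form identities. Since warping rescales only the $m$-dimensional vertical part of each metric, $d\mu_{g_{_P}+\tilde{\rho}^2g_{_F}}=\tilde{\rho}^m\,d\mu_{g_{_P}+g_{_F}}$ and $d\mu_{g_{_X}^\rho}=\rho^m\,d\mu_{g_{_X}}$; moreover $\tilde{\pi}^*(\rho^m)=\tilde{\rho}^m$. With these in hand the proposition follows by applying Lemma \ref{Woody&Jessie} to the auxiliary functions $\varphi\,\rho^m$ and $|d\varphi|_\rho^2\,\rho^m$ respectively:
\begin{align*}
\int_{P\times F}\tilde{\pi}^*\varphi\,d\mu_{g_{_P}+\tilde{\rho}^2 g_{_F}}
&=\int_{P\times F}\tilde{\pi}^*(\varphi\rho^m)\,d\mu_{g_{_P}+g_{_F}}
=|G|\int_X\varphi\rho^m\,d\mu_{g_{_X}}
=|G|\int_X\varphi\,d\mu_{g_{_X}^\rho},\\
\int_{P\times F}|d(\tilde{\pi}^*\varphi)\wedge\Omega_G|_{\tilde{\rho}}^2\,d\mu_{g_{_P}+\tilde{\rho}^2 g_{_F}}
&=\int_{P\times F}\tilde{\pi}^*(|d\varphi|_\rho^2\,\rho^m)\,d\mu_{g_{_P}+g_{_F}}
=|G|\int_X|d\varphi|_\rho^2\,d\mu_{g_{_X}^\rho}.
\end{align*}

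The only subtle point is that Lemma \ref{Woody&Jessie}, as stated, pushes forward only $L^2_1$ functions, whereas the auxiliary integrands above are merely $L^1$. This is not a real obstacle: the first equality of Lemma \ref{Woody&Jessie} is a pure fiber-integration identity and extends to all $f\in L^1(X)$ by a standard density argument (or equivalently by noting that the proof given in the excerpt only uses Fubini and Sublemma \ref{gotica}, neither of which requires weak differentiability). Granting this extension, the proposition is established; I expect the main bookkeeping, rather than any genuine difficulty, to lie in keeping track of which distributions carry which pieces of the metric when verifying the isometry in Step 1.
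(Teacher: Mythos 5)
Your proof is correct and follows essentially the same route as the paper: verify the warped analogue of Lemma \ref{tram} so that (\ref{shatter}) holds with $|\cdot|_{\tilde\rho}$ and $|\cdot|_\rho$, use the volume-form scaling $d\mu_{g_{_P}+\tilde\rho^2 g_{_F}}=\tilde\rho^m d\mu_{g_{_P}+g_{_F}}$ and $d\mu_{g_{_X}^\rho}=\rho^m d\mu_{g_{_X}}$, and reduce to Lemma \ref{Woody&Jessie}. Your observation that the first identity of Lemma \ref{Woody&Jessie} is applied to the merely $L^1$ function $|d\varphi|^2_\rho\rho^m$, and that this is harmless because the identity is a pure fiber-integration statement, is a reasonable explicit acknowledgment of a point the paper passes over silently (the paper itself applies its first equality to $|d\varphi|^2_\rho$ without comment).
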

\begin{proof}
The 1st equality immediately follows from Lemma \ref{Woody&Jessie}.
\begin{eqnarray*}
\int_{P\times F} \tilde{\pi}^*\varphi\ d\mu_{g_{_P}+\tilde{\rho}^2g_{_F}}&=&
\int_{P\times F} \tilde{\pi}^*(\varphi)\ \tilde{\rho}^m\ d\mu_{g_{_P}+g_{_F}}\\&=&
\int_{P\times F} \tilde{\pi}^*(\varphi\rho^m)\ d\mu_{g_{_P}+g_{_F}}\\&=&
|G|\int_X \varphi\rho^m\ d\mu_{g_{_X}}\\ &=& |G|\int_X \varphi\ d\mu_{g_{_X}^\rho}.
\end{eqnarray*}

For the 2nd equality, first note that Lemma \ref{tram} also holds for $(P\times F,g_{_P}+\tilde{\rho}^2g_{_F})$ and $(X,g_{_X}^\rho)$.
Namely, the orthogonal direct sum $\mathcal{H}\oplus \mathfrak{D}(F)$ orthogonal to $\mathfrak{D}(G)$ gets mapped isometrically onto $TX$ by $d\tilde{\pi}$, and
\begin{eqnarray*}
\tilde{\pi}^*(|\alpha|_\rho)= |(\tilde{\pi}^*\alpha)\wedge \Omega_G|_{\tilde{\rho}}.
\end{eqnarray*}
for any differential form $\alpha$ on $X$.

Thus
\begin{eqnarray*}
\int_{P\times F} |d(\tilde{\pi}^*\varphi)\wedge \Omega_G|^2_{\tilde{\rho}}\ d\mu_{g_{_P}+\tilde{\rho}^2g_{_F}} &=&
\int_{P\times F} \tilde{\pi}^*(|d\varphi|^2_\rho)\ d\mu_{g_{_P}+\tilde{\rho}^2g_{_F}} \\ &=&
|G|\int_X |d\varphi|^2_\rho\ d\mu_{g_{_X}^\rho}
\end{eqnarray*}
by using the already-proven 1st equality.

\end{proof}

\subsection{Proof of Theorem \ref{Main-theorem}}

The basic idea of proof is to use fiberwise spherical rearrangement of a first eigenfunction of $X$ following Lichnerowicz theorem. But our fiberwise symmetrization method was developed only for Riemannian products. To circumvent this difficult we shall proceed as follows.

Instead of trying to compare the Rayleigh quotients on $(X,g_{_X}^\rho)$  and  $(B\times S^m_V,g_{_B}+\check{\rho}^2g_{_{\Bbb S}})$ directly, we first lift the 1st eigenfunction of $(X,g_{_X}^\rho)$ to $(P\times F,g_{_P}+g_{_F})$ where $P$ is the principal $G$-bundle associated to $(X,g_{_X})$ as explained so far. Then take its fiberwise spherical rearrangement defined on $P\times S^m_V$, and finally project it down to $B\times S^m_V$ by averaging along $G$ fibers of $P$. The Rayleigh quotient of the last function is to be compared with that of the 1st eigenfunction we started with.

For brevity of notation let's set constants $$V:=\int_F d\mu_{g_{_F}},\ \ \ \  c:=\left(\frac{V}{V_m}\right)^{\frac{1}{m}},\ \ \ \
\tilde{C}:=(\int_{P\times S^m} d\mu_{c^2g_{_P}+\tilde{\rho}^2\textsl{g}_V})^{\frac{1}{2}}$$(by the Bishop-Gromov inequality, $c\leq 1$), and we shall continue to use notations from the previous subsections such as $\tilde{\rho}, \Omega_G,$ etc.
Let $\epsilon\ll 1$ be any positive number satisfying $$\epsilon<\frac{\sqrt{c^{n+\ell}|G|}}{2(\sqrt{c^{n+\ell}}+\tilde{C})}.$$

To begin the above procedure, let $\varphi:X\rightarrow \Bbb R$ be the first eigenfunction of $(X,g_{_X}^\rho)$ normalized as $$\int_X\varphi^2\ d\mu_{g_{_X}^\rho}=1.$$
Since $\tilde{\pi}^*\varphi$ is invariant under the right $G$-action on $P\times F$, by Theorem \ref{morse-1}
there exists a generic-fiberwise Morse function $\tilde{\varphi}$ on $P\times F$ which is $C^2$-close enough to $\tilde{\pi}^*\varphi$ so that
\begin{eqnarray}\label{body-landing-1}
\sup_{g\in G}||\tilde{\pi}^*\varphi-\tilde{\varphi}\cdot g||_{\infty}< \frac{\epsilon}{2}
\end{eqnarray}
\begin{eqnarray}\label{body-landing-2}
\left(\int_{P\times F}(\tilde{\varphi}-\tilde{\pi}^*\varphi)^2\ d\mu_{g_{_P}+\tilde{\rho}^2g_{_F}}\right)^{\frac{1}{2}}<\epsilon
\end{eqnarray}
\begin{eqnarray}\label{body-landing}
|\frac{\int_{P\times F} |d(\tilde{\pi}^*\varphi)\wedge \Omega_G|^2_{\tilde{\rho}}\ d\mu_{g_{_P}+\tilde{\rho}^2g_{_F}}}
{\int_{P\times F} (\tilde{\pi}^*\varphi)^2\ d\mu_{g_{_P}+\tilde{\rho}^2g_{_F}}}
-\frac{\int_{P\times F} |d\tilde{\varphi}\wedge \Omega_G|^2_{\tilde{\rho}}\ d\mu_{g_{_P}+\tilde{\rho}^2g_{_F}}}
{\int_{P\times F} \tilde{\varphi}^2\ d\mu_{g_{_P}+\tilde{\rho}^2g_{_F}}}|<\epsilon.
\end{eqnarray}
Moreover taking $\tilde{\varphi}$ further closer to $\tilde{\pi}^*\varphi$ we can also achieve
\begin{eqnarray}\label{stopphonecall}
\int_{P\times F}\tilde{\varphi}\ d\mu_{g_{_P}+\tilde{\rho}^2g_{_F}}=0
\end{eqnarray}
by using $$\int_{P\times F}\tilde{\pi}^*\varphi\ d\mu_{g_{_P}+\tilde{\rho}^2g_{_F}}=|G|\int_X \varphi\ d\mu_{g_{_X}^\rho}=0$$ and adding a small constant to $\tilde{\varphi}$ as necessary while not violating the above three inequalities (\ref{body-landing-1}, \ref{body-landing-2}, \ref{body-landing}) and not harming the property of being a generic-fiberwise Morse function.

Now take the fiberwise spherical rearrangement $\tilde{\varphi}_{\bar{*}}$ of $\tilde{\varphi}$ on the Riemannian product $(P\times F, g_{_P}+g_{_F})$, then the resulting $\tilde{\varphi}_{\bar{*}}\in L_1^2(P\times S^m_V)\cap C^0(P\times S^m_V)$ due to Theorem \ref{main-estimates} is ``almost $G$-invariant", since $\tilde{\varphi}$ was ``almost $G$-invariant", where a $G$-action on $P\times S^m_V$ is defined such that it acts only on $P$ keeping $S^m_V$. Indeed
\begin{eqnarray}\label{ChoeChoi}
\sup_{g\in G}||\tilde{\varphi}_{\bar{*}}-(\tilde{\varphi}_{\bar{*}})\cdot g||_{\infty}&=&
\sup_{g\in G}||\tilde{\varphi}_{\bar{*}}-(\tilde{\varphi}\cdot g)_{\bar{*}}||_{\infty}\nonumber\\ &\leq& \sup_{g\in G}||\tilde{\varphi}-\tilde{\varphi}\cdot g||_{\infty}\nonumber\\ &\leq& \sup_{g\in G}(||\tilde{\varphi}-\tilde{\pi}^*\varphi||_{\infty}+||\tilde{\pi}^*\varphi-\tilde{\varphi}\cdot g||_{\infty})\nonumber\\ &<& \epsilon
\end{eqnarray}
where the 1st equality is due to (\ref{Ocean-nuclear}) and the inequality of the 2nd line is due to Theorem \ref{Firstlady}.

Let's define $$\tilde{\varphi}_{\circledast}:P\times S^m_V\rightarrow \Bbb R$$ by $$\tilde{\varphi}_{\circledast}(x):=\frac{\int_{G} (\tilde{\varphi}_{\bar{*}}\cdot g)(x)\ d\mu_{g_{_G}}(g)}{|G|}$$ using the notation of (\ref{BHCP}).
To see that this integral is well-defined, we rewrite $$|G|\tilde{\varphi}_{\circledast}(x)=\int_{G\times \{x\}} \tilde{\varphi}_{\bar{*}}\circ \sigma\ d\mu_{g_{_G}}$$
where $\sigma:G\times (P\times S^m_V)\rightarrow P\times S^m_V$ is given by $\sigma(g,x)=x\cdot g^{-1}$.
Since $\tilde{\varphi}_{\bar{*}}$ is continuous, so is $\tilde{\varphi}_{\bar{*}}\circ \sigma$, and its integration is well-defined.

\begin{Lemma}\label{grape-heaven}
$\tilde{\varphi}_{\circledast}$ belongs to $L_1^2(P\times S^m_V)$ and is $G$-invariant. In particular $\tilde{\varphi}_{\circledast}$ projects down to a function $\check{\varphi}:B\times S^m_V\rightarrow \Bbb R$ which is weakly differentiable.
\end{Lemma}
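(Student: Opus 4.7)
The plan is to verify the three assertions---membership in $L_1^2$, $G$-invariance, and weak differentiability of the descent---in order. The isometric right $G$-action on $(P,g_{_P})$ extends to an isometric right $G$-action on the closed Riemannian product $(P\times S^m_V,\,g_{_P}+\textsl{g}_V)$ by acting trivially on $S^m_V$, and Theorem \ref{main-estimates} already provides $\tilde{\varphi}_{\bar{*}}\in L_1^2(P\times S^m_V)$. Since the formula defining $\tilde{\varphi}_{\circledast}$ is precisely the averaging operator $\Psi$ of Proposition \ref{gamsa1}, I would invoke that proposition directly to conclude $\tilde{\varphi}_{\circledast}\in L_1^2(P\times S^m_V)$ with $||\tilde{\varphi}_{\circledast}||_{L_1^2}\leq ||\tilde{\varphi}_{\bar{*}}||_{L_1^2}$.

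For $G$-invariance, I would compute directly from the definition (\ref{BHCP}): for any $g'\in G$ and $x\in P\times S^m_V$, using the right-invariance of the bi-invariant Haar measure $d\mu_{g_{_G}}$ together with the substitution $h=gg'$,
$$(\tilde{\varphi}_{\circledast}\cdot g')(x)=\frac{1}{|G|}\int_G \tilde{\varphi}_{\bar{*}}(x\cdot(g')^{-1}g^{-1})\,d\mu_{g_{_G}}(g)=\frac{1}{|G|}\int_G \tilde{\varphi}_{\bar{*}}(x\cdot h^{-1})\,d\mu_{g_{_G}}(h)=\tilde{\varphi}_{\circledast}(x).$$
Since the orbits of this $G$-action are exactly the fibers of $\pi_{_P}\times\mathrm{id}:P\times S^m_V\to B\times S^m_V$ (the action is trivial on $S^m_V$), the $G$-invariance of $\tilde{\varphi}_{\circledast}$ yields a unique function $\check{\varphi}:B\times S^m_V\to\Bbb R$ with $(\pi_{_P}\times\mathrm{id})^*\check{\varphi}=\tilde{\varphi}_{\circledast}$.

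Finally, for weak differentiability of $\check{\varphi}$, I would pass to a local trivialization $P|_U\simeq U\times G$ over a coordinate chart $U\subset B$. In these coordinates the right $G$-action is right multiplication on the $G$-factor, so $G$-invariance forces $\tilde{\varphi}_{\circledast}(u,g,s)=\check{\varphi}(u,s)$, independent of $g$. Because $g_{_P}$ is bi-Lipschitz equivalent to the product metric $g_{_B}+g_{_G}$ on any relatively compact subset of $U\times G$, Fubini's theorem applied to $||\tilde{\varphi}_{\circledast}||_{L_1^2}^2$ in this trivialization shows that, up to bounded multiplicative constants, it equals $|G|\cdot||\check{\varphi}||_{L_1^2}^2$ over the corresponding subset of $U\times S^m_V$. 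Hence $\check{\varphi}$ lies locally in $L_1^2(B\times S^m_V)$, and by Proposition \ref{Sewoong} applied in a coordinate chart this is exactly weak differentiability on $B\times S^m_V$. The main delicate point is this last descent step---an $L_1^2$ function on the total space of a fiber bundle does not in general restrict to an $L_1^2$ function on a submanifold---but $G$-invariance collapses the question to the Fubini computation just described, and I would take care to check that the horizontal-vertical cross terms in $g_{_P}$ contribute only bounded factors when applied to a function independent of the $G$-coordinate.
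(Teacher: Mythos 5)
Your proposal is correct and follows essentially the same route as the paper: $L_1^2$-membership via Proposition~\ref{gamsa1}, $G$-invariance by a direct change of variables using the right-invariance of $g_{_G}$, and weak differentiability of $\check{\varphi}$ via a local trivialization and Proposition~\ref{Sewoong}. Your Fubini/bi-Lipschitz elaboration of the descent step spells out the point the paper leaves implicit when it says the conclusion is "immediate" from Proposition~\ref{Sewoong} — the key fact in both versions being that $\tilde{\varphi}_{\circledast}$ is constant in the $G$-direction, so the ACL property and integrability of partials survive restriction to a slice.
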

\begin{proof}
Being in $L_1^2$ is the consequence of Proposition \ref{gamsa1}. To see the $G$-invariance, for any $x\in P\times S^m_V$ and $g'\in G$,
\begin{eqnarray*}
|G|\tilde{\varphi}_{\circledast}(x\cdot g')&=&
\int_{G} (\tilde{\varphi}_{\bar{*}}\cdot g)(x\cdot g')\ d\mu_{g_{_G}}(g)\\ &=&
\int_G\tilde{\varphi}_{\bar{*}}((x\cdot g')\cdot g^{-1})\ d\mu_{g_{_G}}(g)\\ &=&
\int_G(\tilde{\varphi}_{\bar{*}}\cdot (g(g')^{-1}))(x)\ d\mu_{g_{_G}}(g(g')^{-1})\\ &=&|G|\ \tilde{\varphi}_{\circledast}(x)
\end{eqnarray*}
by using the right-invariance of $g_{_G}$.

To show the weak differentiability of $\check{\varphi}$, take local coordinates $U_\alpha\subset B$ and $W\subset S^m_V$ such that $\pi_{_P}^{-1}(U_\alpha)\simeq U_\alpha\times G$. Then in $U_\alpha\times G\times W$, $\tilde{\varphi}_{\circledast}$ is just the obvious extension of
$\check{\varphi}$ defined on $U_\alpha\times \{e\}\times W$. The weak differentiability of $\check{\varphi}$ is immediate from that of $\tilde{\varphi}_{\circledast}$ by applying Proposition \ref{Sewoong}.
\end{proof}

So we have obtained
$$\varphi \Rightarrow \tilde{\pi}^*\varphi \Rightarrow \tilde{\varphi} \Rightarrow  \tilde{\varphi}_{\bar{*}} \Rightarrow \tilde{\varphi}_{\circledast} \Rightarrow \check{\varphi}$$
and $\check{\varphi}$ is the desired function on $B\times S^m_V$. To compare its Rayleigh quotient with that of $\varphi$, it is now the main task to find relations between norms of these functions.
The followings are devoted to this purpose. Let's denote norms of differential forms on $(P\times S^m,c^2g_{_P}+\tilde{\rho}^2\textsl{g}_V)$ and $(B\times S^m,c^2g_{_B}+\check{\rho}^2\textsl{g}_V)$
by $|\cdot|_{c,\tilde{\rho}}$ and $|\cdot|_{c,\check{\rho}}$ respectively.

First of all, through the process $\tilde{\varphi}_{\circledast} \Rightarrow \check{\varphi}$ the values of Rayleigh quotient do not change basically, since $\tilde{\varphi}_{\circledast}$ is just the lift of $\check{\varphi}$. By applying Proposition \ref{Matt-Damon} to
$$\tilde{\pi}:(P\times S^m,c^2g_{_P}+\tilde{\rho}^2\textsl{g}_V) \rightarrow (B\times S^m,c^2g_{_B}+\check{\rho}^2\textsl{g}_V)$$
$$(p,y)\mapsto (\pi_{_P}(p),y)$$
where we abused the notation $\tilde{\pi}$ following the proposition, we get
\begin{eqnarray}\label{sunmok2}
\int_{P\times S^m} \tilde{\varphi}_{\circledast}^k\ d\mu_{c^2g_{_P}+\tilde{\rho}^2\textsl{g}_V}=c^{\ell}|G|\int_{B\times S^m} \check{\varphi}^k\ d\mu_{c^2g_{_B}+\check{\rho}^2\textsl{g}_V}
\end{eqnarray}
for $k=1,2$, and \footnote{In this case the projection map $\tilde{\pi}$ is a Riemannian submersion, because the $G$-action on $S^m$ is trivial. So  (\ref{sunmok3}) can be proved without recourse to the 2nd equality of Proposition \ref{Matt-Damon}. Indeed by using the fact that $\tilde{\varphi}_{\circledast}$ is constant along any $G$-orbit,
$$|d\tilde{\varphi}_{\circledast}\wedge c^\ell\Omega_G|_{c,\tilde{\rho}}=|d\tilde{\varphi}_{\circledast}|_{c,\tilde{\rho}}=\tilde{\pi}^*|d\check{\varphi}|^2_{c,\check{\rho}}$$
holds a.e., namely at any point where $\tilde{\varphi}_{\circledast}$ is differentiable.}
\begin{eqnarray}\label{sunmok3}
\int_{P\times S^m} |d\tilde{\varphi}_{\circledast}\wedge c^\ell\Omega_G|^2_{c,\tilde{\rho}}\ d\mu_{c^2g_{_P}+\tilde{\rho}^2\textsl{g}_V}
&=& \int_{P\times S^m} |d(\tilde{\pi}^*\check{\varphi})\wedge c^\ell\Omega_G|^2_{c,\tilde{\rho}}\ d\mu_{c^2g_{_P}+\tilde{\rho}^2\textsl{g}_V}\nonumber\\
&=& c^{\ell}|G|\int_{B\times S^m} |d\check{\varphi}|^2_{c,\check{\rho}}\ d\mu_{c^2g_{_B}+\check{\rho}^2\textsl{g}_V}.
\end{eqnarray}

Secondly, the $L^2$-norm of $\tilde{\varphi}_{\circledast}$ should be close to that of $\tilde{\varphi}_{\bar{*}}$. Concretely we shall need the following.
\begin{Lemma}\label{leejung1}
\begin{eqnarray}
\left(\int_{P\times S^m}(\tilde{\varphi}_{\circledast})^2\ d\mu_{c^2g_{_P}+\tilde{\rho}^2\textsl{g}_V}\right)^{\frac{1}{2}} &\geq&
\left(\int_{P\times S^m}(\tilde{\varphi}_{\bar{*}})^2\ d\mu_{c^2g_{_P}+\tilde{\rho}^2\textsl{g}_V}\right)^{\frac{1}{2}}-\tilde{C}\epsilon\\
&>& \frac{\sqrt{c^{n+\ell}|G|}}{2}.
\end{eqnarray}
\end{Lemma}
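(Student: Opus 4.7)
The plan is to prove the two inequalities in sequence, with the first following from a pointwise almost-invariance estimate and the second from a fiberwise measure-preserving computation combined with the $L^2$ closeness of $\tilde{\varphi}$ to $\tilde{\pi}^*\varphi$.

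For the first inequality, I would start with the pointwise estimate
$$|\tilde{\varphi}_{\circledast}(x)-\tilde{\varphi}_{\bar{*}}(x)|=\left|\frac{1}{|G|}\int_G\big((\tilde{\varphi}_{\bar{*}}\cdot g)(x)-\tilde{\varphi}_{\bar{*}}(x)\big)\,d\mu_{g_{_G}}(g)\right|<\epsilon,$$
which is immediate from (\ref{ChoeChoi}). Integrating against the volume form of $c^2g_{_P}+\tilde{\rho}^2\textsl{g}_V$ and taking square roots gives $\|\tilde{\varphi}_{\circledast}-\tilde{\varphi}_{\bar{*}}\|_{L^2}\leq \tilde{C}\epsilon$ by the very definition of $\tilde{C}$. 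The reverse triangle inequality in $L^2$ then delivers the first claimed inequality.

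For the second inequality, the key observation is that under the warped product metric $c^2g_{_P}+\tilde{\rho}^2\textsl{g}_V$ the volume form factors as $c^{n+\ell}\tilde{\rho}^{m}\,d\mu_{g_{_P}}\,d\mu_{\textsl{g}_V}$, and $\tilde{\rho}$ is constant on the fibers of $\pi_{_P}$. Using the fiberwise equimeasurability of the spherical rearrangement (identity (\ref{libraryinNY}) with $p=2$) applied on each slice $\{s\}\times F\mapsto \{s\}\times S^m_V$ yields
$$\int_{P\times S^m}\tilde{\varphi}_{\bar{*}}^2\,d\mu_{c^2g_{_P}+\tilde{\rho}^2\textsl{g}_V}=c^{n+\ell}\int_{P\times F}\tilde{\varphi}^2\,d\mu_{g_{_P}+\tilde{\rho}^2g_{_F}}.$$
I would then combine this with (\ref{body-landing-2}) and Proposition \ref{Matt-Damon}, noting that $\int_{P\times F}(\tilde{\pi}^*\varphi)^2\,d\mu_{g_{_P}+\tilde{\rho}^2g_{_F}}=|G|$ by the normalization of $\varphi$, to deduce
$$\|\tilde{\varphi}_{\bar{*}}\|_{L^2}\geq \sqrt{c^{n+\ell}}\bigl(\sqrt{|G|}-\epsilon\bigr).$$
Chained with the first inequality, this gives $\|\tilde{\varphi}_{\circledast}\|_{L^2}\geq\sqrt{c^{n+\ell}|G|}-(\sqrt{c^{n+\ell}}+\tilde{C})\epsilon$, and the assumed smallness $\epsilon<\frac{\sqrt{c^{n+\ell}|G|}}{2(\sqrt{c^{n+\ell}}+\tilde{C})}$ immediately yields the strict lower bound $\frac{\sqrt{c^{n+\ell}|G|}}{2}$.

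Neither step is technically deep; the main thing to get right is the bookkeeping in the second step, where one must carefully track the conformal factor $c^{n+\ell}$ arising from the $c^2$-scaling of $g_{_P}$ and verify that the fiberwise identity $\int_{S^m_V}\tilde{\varphi}_{\bar{*}}(s,\cdot)^2\,d\mu_{\textsl{g}_V}=\int_F\tilde{\varphi}(s,\cdot)^2\,d\mu_{g_{_F}}$ (valid on $N_0(\tilde{\varphi})$, hence a.e.) may indeed be integrated over $P$ against $\tilde{\rho}^m\,d\mu_{g_{_P}}$ to produce the warped-product norm on $P\times F$. Once this identification is in place the rest is purely arithmetic, and the choice of $\epsilon$ displayed just before the proof is seen to be exactly what is needed to close the estimate.
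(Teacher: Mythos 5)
Your proposal is correct and follows essentially the same path as the paper's proof: the pointwise bound $|\tilde{\varphi}_{\circledast}-\tilde{\varphi}_{\bar{*}}|\leq\epsilon$ from (\ref{ChoeChoi}) gives the first inequality via the reverse triangle inequality, and the second is obtained by fiberwise equimeasurability plus the conformal scaling factor $c^{n+\ell}$ to pass from $\tilde{\varphi}_{\bar{*}}$ to $\tilde{\varphi}$, then (\ref{body-landing-2}), Minkowski, Proposition \ref{Matt-Damon}, the normalization of $\varphi$, and the chosen smallness of $\epsilon$. The bookkeeping of $c^{n+\ell}$ and of integrating the slicewise identity (\ref{libraryinNY}) against $\tilde{\rho}^m\,d\mu_{g_{_P}}$ is exactly what the paper does, just stated a bit more explicitly.
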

\begin{proof}
For any $z\in P\times S^m$
\begin{eqnarray*}
|\tilde{\varphi}_{\bar{*}}(z)-\tilde{\varphi}_{\circledast}(z)| &=& \frac{1}{|G|}|\tilde{\varphi}_{\bar{*}}(z)\int_{G} d\mu_{g_{_G}}(g)-\int_{G} (\tilde{\varphi}_{\bar{*}}\cdot g)(z)\ d\mu_{g_{_G}}(g)|\\ &\leq&\frac{1}{|G|}
\int_{G} |\tilde{\varphi}_{\bar{*}}(z)- (\tilde{\varphi}_{\bar{*}}\cdot g)(z)|\ d\mu_{g_{_G}}(g)\\ &\leq& \epsilon
\end{eqnarray*}
by (\ref{ChoeChoi}).
Thus
$$\int_{P\times S^m}(\tilde{\varphi}_{\circledast}-\tilde{\varphi}_{\bar{*}})^2\ d\mu_{c^2g_{_P}+\tilde{\rho}^2\textsl{g}_V}\leq
\tilde{C}^2\epsilon^2,$$
and hence
\begin{eqnarray*}
\left(\int_{P\times S^m}(\tilde{\varphi}_{\circledast})^2\ d\mu_{c^2g_{_P}+\tilde{\rho}^2\textsl{g}_V}\right)^{\frac{1}{2}} &\geq&
\left(\int_{P\times S^m}(\tilde{\varphi}_{\bar{*}})^2\ d\mu_{c^2g_{_P}+\tilde{\rho}^2\textsl{g}_V}\right)^{\frac{1}{2}}-\tilde{C}\epsilon\\&=&
\left(\int_{P\times F}\tilde{\varphi}^2\ d\mu_{c^2g_{_P}+\tilde{\rho}^2g_{_F}}\right)^{\frac{1}{2}}-\tilde{C}\epsilon\\
&=& \left(c^{n+\ell}\int_{P\times F}\tilde{\varphi}^2\ d\mu_{g_{_P}+\tilde{\rho}^2g_{_F}}\right)^{\frac{1}{2}}-\tilde{C}\epsilon\\
&>& \sqrt{c^{n+\ell}}(\left(\int_{P\times F}\tilde{\pi}^*\varphi^2\ d\mu_{g_{_P}+\tilde{\rho}^2g_{_F}}\right)^{\frac{1}{2}}-\epsilon)-\tilde{C}\epsilon\\
&=& \sqrt{c^{n+\ell}}\left(|G|\int_{X}\varphi^2\ d\mu_{g_{_X}^\rho}\right)^{\frac{1}{2}}-(\sqrt{c^{n+\ell}}+\tilde{C})\epsilon\\
&=& \sqrt{c^{n+\ell}|G|}-(\sqrt{c^{n+\ell}}+\tilde{C})\epsilon\\
&>& \frac{\sqrt{c^{n+\ell}|G|}}{2}
\end{eqnarray*}
by our choice of sufficiently small $\epsilon>0$, where the inequality of the 4th line is obtained by the combination of (\ref{body-landing-2}) and the Minkowski inequality.
By a similar computation one can also get that  $(\int_{P\times S^m}(\tilde{\varphi}_{\circledast})^2\ d\mu_{c^2g_{_P}+\tilde{\rho}^2\textsl{g}_V})^{\frac{1}{2}}$ is bounded above by a constant.
\end{proof}

Thirdly, through the symmetrizing processes $\tilde{\varphi} \Rightarrow  \tilde{\varphi}_{\bar{*}} \Rightarrow \tilde{\varphi}_{\circledast}$, the norms of derivatives are basically decreasing, as expected.
\begin{Lemma}\label{leejung2}
\begin{eqnarray*}
\int_{P\times F} |d\tilde{\varphi}\wedge \Omega_G|^2_{\tilde{\rho}}\ d\mu_{c^2g_{_P}+\tilde{\rho}^2g_{_F}}&\geq&
c^2\int_{P\times S^m} |d\tilde{\varphi}_{\bar{*}}\wedge c^\ell\Omega_G|^2_{c,\tilde{\rho}}\ d\mu_{c^2g_{_P}+\tilde{\rho}^2\textsl{g}_V}\\
&\geq& c^2\int_{P\times S^m}|d\tilde{\varphi}_{\circledast}\wedge c^\ell\Omega_G|^2_{c,\tilde{\rho}}\ d\mu_{c^2g_{_P}+\tilde{\rho}^2\textsl{g}_V}.
\end{eqnarray*}
\end{Lemma}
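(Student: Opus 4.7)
The plan is to treat the two inequalities independently. The first is a fiberwise application of the spherical rearrangement estimates of Theorem \ref{main-estimates} after a pointwise decomposition of the wedge norm $|d(\cdot)\wedge\Omega_G|$, while the second is a direct invocation of the Lie-group averaging inequality of Proposition \ref{gamsa} on $P\times S^m_V$.

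For the first inequality, I would work in a local frame of $P\times F$ of the form $\{h_i, g_j, f_k\}$ where $\{h_i\}$ is $g_{_B}$-orthonormal on $\mathcal{H}$, $\{g_j\}$ is $g_{_G}$-orthonormal on $\mathfrak{D}(G)$, and $\{f_k\}$ is $g_{_F}$-orthonormal on $\mathfrak{D}(F)$. Since $\Omega_G=g_1^\vee\wedge\cdots\wedge g_\ell^\vee$ in the corresponding dual coframe, wedging with $\Omega_G$ annihilates the $\mathfrak{D}(G)$-components of $d\tilde{\varphi}$, and a direct calculation in the coframe $\{h_i^\vee, g_j^\vee, \tilde{\rho}f_k^\vee\}$ orthonormal for $g_{_P}+\tilde{\rho}^2g_{_F}$ yields
$$|d\tilde{\varphi}\wedge\Omega_G|^2_{\tilde{\rho}}=\sum_i (h_i(\tilde{\varphi}))^2+\tilde{\rho}^{-2}|d^F\tilde{\varphi}|^2_{g_{_F}},$$
and the analogous computation on $(P\times S^m_V,c^2g_{_P}+\tilde{\rho}^2\textsl{g}_V)$ yields
$$|d\tilde{\varphi}_{\bar{*}}\wedge c^\ell\Omega_G|^2_{c,\tilde{\rho}}=c^{-2}\sum_i (h_i(\tilde{\varphi}_{\bar{*}}))^2+\tilde{\rho}^{-2}|d^S\tilde{\varphi}_{\bar{*}}|^2_{\textsl{g}_V}.$$
The common volume factor $c^{n+\ell}\tilde{\rho}^m$ cancels on both sides, and since $\tilde{\rho}$ is constant along each $F$-fiber of $P\times F\to P$, Fubini reduces the inequality to a fiberwise statement at each $q\in N_0(\tilde{\varphi})$. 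The horizontal sums are then controlled by the first inequality of Theorem \ref{main-estimates} applied to each $v=h_i(q)$, and the fiber-gradient term by the second inequality of Theorem \ref{main-estimates} with constant $C=c^{-1}$ coming from $Ric_{g_{_F}}\geq m-1$; this is exactly what converts the $c^{-2}$ on one side into the factor $c^2$ appearing in the lemma.

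For the second inequality, I would apply Proposition \ref{gamsa} with $M=P\times S^m_V$ carrying the metric $c^2g_{_P}+\tilde{\rho}^2\textsl{g}_V$, the compact Lie group $H=G$ acting by right multiplication on $P$ and trivially on $S^m_V$, the function $f=\tilde{\varphi}_{\bar{*}}$, and the $\ell$-form $\Omega=c^\ell\Omega_G$. All hypotheses hold: the metric is $G$-invariant because $g_{_P}$ is and $\tilde{\rho}$ descends from $B$; the form $c^\ell\Omega_G$ is $G$-invariant by the proposition preceding Lemma \ref{tram}; and at each point $c^\ell\Omega_G=(cg_1^\vee)\wedge\cdots\wedge(cg_\ell^\vee)$ is a wedge of orthonormal cotangent vectors, because $\mathfrak{D}(G)$ is $g_{_P}$-orthogonal to its complement. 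Since $\tilde{\varphi}_{\circledast}$ is by construction the $G$-averaged function $(\tilde{\varphi}_{\bar{*}})_\circ$ of that proposition, the stated conclusion is immediate.

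The main obstacle will be the bookkeeping in the first step: making sure that the rescaling factor $c=(V/V_m)^{1/m}$, entering both through the base-metric scaling on $P\times S^m_V$ (producing the $c^{-2}$ in the pointwise wedge-norm formula) and through the Bishop-type constant $C=c^{-1}$ in Theorem \ref{main-estimates}, combines to give exactly the factor $c^2$ required by the lemma, with no hidden mismatch between the metric weighting and the Lipschitz constant.
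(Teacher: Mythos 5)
Your proposal is correct and follows essentially the same route as the paper: decompose $|d(\cdot)\wedge\Omega_G|^2$ into horizontal and fiber pieces (the $\mathfrak{D}(G)$-directions drop out), apply the two estimates of Theorem \ref{main-estimates} fiberwise at each $p\in N_0(\tilde{\varphi})$ with the Bishop constant $C=c^{-1}$, then deduce the second inequality from Proposition \ref{gamsa} with $\Omega=c^\ell\Omega_G$. The paper organizes the first step as a chain of equalities between integrals rather than via an explicit pointwise coframe formula, but the computation and constants are identical.
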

\begin{proof}
Recall that $|\cdot|_{\tilde{\rho}}$ denotes norm w.r.t. $g_{_P}+\tilde{\rho}^2g_{_F}$.
For the 1st inequality, it's enough to show that
\begin{eqnarray}\label{puskas}
\int_{\{p\}\times F} |d\tilde{\varphi}\wedge \Omega_G|^2_{\tilde{\rho}}\ d\mu_{g_{_F}}\geq
c^2\int_{\{p\}\times S^m} |d\tilde{\varphi}_{\bar{*}}\wedge c^\ell\Omega_G|^2_{c,\tilde{\rho}}\ d\mu_{\textsl{g}_V}
\end{eqnarray}
for any point $p\in N_0(\tilde{\varphi})$.

We split the exterior derivative $d$ on $P\times F$ and $P\times S^m$ as $d^P+d^F$ and $d^P+d^S$ respectively according to their product structure.
Let $\{v_1,\cdots,v_{n+\ell}\}$ be an orthonormal basis of $T_pP$ w.r.t. metric $g_{_P}$ such that $\Omega_G$ at $p$ is $v_{n+1}^*\wedge\cdots\wedge v_{n+\ell}^*$
where $v_i^*$ means the dual cotangent vector of $v_i$.
By using the inequalities in Theorem \ref{main-estimates}, we get
\begin{eqnarray*}
\int_{\{p\}\times F} |d^P\tilde{\varphi}\wedge \Omega_G|^2_{\tilde{\rho}}\ d\mu_{g_{_F}}&=& \int_{\{p\}\times F} |d^P\tilde{\varphi}\wedge \Omega_G|^2_{g_{_P}} d\mu_{g_{_F}}\\ &=& \sum_{i=1}^{n} \int_{\{p\}\times F} |v_i(\tilde{\varphi})|^2 d\mu_{g_{_F}} \\
&\geq&  \sum_{i=1}^{n} \int_{\{p\}\times S^m_V} |v_i(\tilde{\varphi}_{\bar{*}})|^2 d\mu_{\textsl{g}_V} \\ &=&
\int_{\{p\}\times S^m_V} |d^P\tilde{\varphi}_{\bar{*}}\wedge \Omega_G|^2_{g_{_P}} d\mu_{\textsl{g}_V}\\
&=& c^2\int_{\{p\}\times S^m} |d^P\tilde{\varphi}_{\bar{*}}\wedge c^\ell\Omega_G|^2_{c^2g_{_P}} d\mu_{\textsl{g}_V}\\
&=& c^2\int_{\{p\}\times S^m} |d^P\tilde{\varphi}_{\bar{*}}\wedge c^\ell\Omega_G|^2_{c,\tilde{\rho}}\ d\mu_{\textsl{g}_V},
\end{eqnarray*}
and
\begin{eqnarray*}
\int_{\{p\}\times F} |d^F\tilde{\varphi}\wedge \Omega_G|^2_{\tilde{\rho}}\ d\mu_{g_{_F}} &=&
(\tilde{\rho}(p))^{-2}\int_{\{p\}\times F} |d^F\tilde{\varphi}|^2_{g_{_F}} d\mu_{g_{_F}}\\ &\geq& (\tilde{\rho}(p))^{-2}c^2\int_{\{p\}\times S^m_V} |d^S\tilde{\varphi}_{\bar{*}}|^2_{\textsl{g}_V} d\mu_{\textsl{g}_V}\\ &=& c^2\int_{\{p\}\times S^m} |d^S\tilde{\varphi}_{\bar{*}}\wedge c^\ell\Omega_G|^2_{c,\tilde{\rho}}\ d\mu_{\textsl{g}_V}.
\end{eqnarray*}
By combining the above two inequalities, (\ref{puskas}) follows.

The 2nd inequality is obtained by applying Proposition \ref{gamsa} with $\Omega$ equal to $c^\ell\Omega_G$.
\end{proof}

Now we are ready to compare $\lambda_1(X,g_{_X}^\rho)$ and $\lambda_1(B\times S^m,c^2g_{_B}+\check{\rho}^2\textsl{g}_V)$ by combining all we have obtained so far.
By Proposition \ref{Matt-Damon} and (\ref{body-landing})
\begin{eqnarray*}
\lambda_1(X,g_{_X}^\rho) &=& R_{g_{_X}^\rho} (\varphi)\\ &=& \frac{\int_X | d\varphi|^2_\rho\ d\mu_{g_{_X}^\rho}}{\int_X\varphi^2\ d\mu_{g_{_X}^\rho}}\\ &=&
\frac{\int_{P\times F} |d(\tilde{\pi}^*\varphi)\wedge \Omega_G|^2_{\tilde{\rho}}\ d\mu_{g_{_P}+\tilde{\rho}^2g_{_F}}}
{\int_{P\times F} (\tilde{\pi}^*\varphi)^2\ d\mu_{g_{_P}+\tilde{\rho}^2g_{_F}}}\\
&\geq& \frac{\int_{P\times F} |d\tilde{\varphi}\wedge \Omega_G|^2_{\tilde{\rho}}\ d\mu_{g_{_P}+\tilde{\rho}^2g_{_F}}}
{\int_{P\times F} \tilde{\varphi}^2\ d\mu_{g_{_P}+\tilde{\rho}^2g_{_F}}}-\epsilon\\
&=& \frac{\int_{P\times F} |d\tilde{\varphi}\wedge \Omega_G|^2_{\tilde{\rho}}\ d\mu_{c^2g_{_P}+\tilde{\rho}^2g_{_F}}}
{\int_{P\times F} \tilde{\varphi}^2\ d\mu_{c^2g_{_P}+\tilde{\rho}^2g_{_F}}}-\epsilon.
\end{eqnarray*}
Upon taking the fiberwise spherical rearrangement $\tilde{\varphi}_{\bar{*}}$ of $\tilde{\varphi}$ and applying Lemma \ref{leejung2}, Lemma \ref{leejung1} and equalities (\ref{libraryinNY}), (\ref{sunmok2}), (\ref{sunmok3}), the above last term is
\begin{eqnarray*}
\ &\geq& c^2
\frac{\int_{P\times S^m} |d\tilde{\varphi}_{\bar{*}}\wedge c^\ell\Omega_G|^2_{c,\tilde{\rho}}\ d\mu_{c^2g_{_P}+\tilde{\rho}^2\textsl{g}_V}}
{\int_{P\times S^m} (\tilde{\varphi}_{\bar{*}})^2\ d\mu_{c^2g_{_P}+\tilde{\rho}^2\textsl{g}_V}}-\epsilon\\ &\geq& c^2
\frac{\int_{P\times S^m} |d\tilde{\varphi}_{\circledast}\wedge c^\ell\Omega_G|^2_{c,\tilde{\rho}}\ d\mu_{c^2g_{_P}+\tilde{\rho}^2\textsl{g}_V}}
{\left(\left(\int_{P\times S^m}(\tilde{\varphi}_{\circledast})^2\ d\mu_{c^2g_{_P}+\tilde{\rho}^2\textsl{g}_V}\right)^{\frac{1}{2}}+
\tilde{C}\epsilon\right)^2}-\epsilon
\\ &\geq& c^2
\frac{\int_{P\times S^m} |d\tilde{\varphi}_{\circledast}\wedge c^\ell\Omega_G|^2_{c,\tilde{\rho}}\ d\mu_{c^2g_{_P}+\tilde{\rho}^2\textsl{g}_V}}
{\int_{P\times S^m}(\tilde{\varphi}_{\circledast})^2\ d\mu_{c^2g_{_P}+\tilde{\rho}^2\textsl{g}_V}}
\left(1-\frac{\tilde{C}\epsilon}{\left(\int_{P\times S^m}(\tilde{\varphi}_{\circledast})^2\  d\mu_{c^2g_{_P}+\tilde{\rho}^2\textsl{g}_V}\right)^{\frac{1}{2}}}\right)^2
-\epsilon \\ &\geq& c^2
\frac{c^\ell |G|\int_{B\times S^m} |d\check{\varphi}|^2_{c,\check{\rho}}\ d\mu_{c^2g_{_B}+\check{\rho}^2\textsl{g}_V}}{c^\ell |G|\int_{B\times S^m}\check{\varphi}^2\ d\mu_{c^2g_{_B}+\check{\rho}^2\textsl{g}_V}}
\left(1-\frac{\tilde{C}\epsilon}{\frac{\sqrt{c^{n+\ell}|G|}}{2}}\right)^2-\epsilon.
\end{eqnarray*}
Here the 3rd line is obtained by applying the inequality $$\frac{1}{a+b}\geq \frac{1}{a}(1-\frac{b}{a})$$ for any $a>b\geq 0$ combined with $$(\int_{P\times S^m}(\tilde{\varphi}_{\circledast})^2\  d\mu_{c^2g_{_P}+\tilde{\rho}^2\textsl{g}_V})^{\frac{1}{2}}> \frac{\sqrt{c^{n+\ell}|G|}}{2} > \tilde{C}\epsilon$$ which was also used in the 4th line. The last term can be simplified as
$$c^2R_{c^2g_{_B}+\check{\rho}^2\textsl{g}_V}(\check{\varphi})\ (1-C'\epsilon)^2-\epsilon$$
where $C'$ is a constant independent of $\epsilon>0$.

It still remains to show that $\check{\varphi}$ is $L^2$-orthogonal to 1, while its $L^2$-norm is away from 0.
Indeed
\begin{eqnarray*}
\int_{B\times S^m} \check{\varphi}^2\ d\mu_{c^2g_{_B}+\check{\rho}^2\textsl{g}_V}&=&
\frac{1}{c^{\ell}|G|}\int_{P\times S^m} \tilde{\varphi}_{\circledast}^2\ d\mu_{c^2g_{_P}+\tilde{\rho}^2\textsl{g}_V}\\
&>&  \frac{1}{c^{\ell}|G|}(\frac{\sqrt{c^{n+\ell}|G|}}{2})^2\\ &=& \frac{c^n}{4},
\end{eqnarray*}
and
\begin{eqnarray*}
\int_{B\times S^m} \check{\varphi}\ d\mu_{c^2g_{_B}+\check{\rho}^2\textsl{g}_V}&=&
\frac{1}{c^{\ell}|G|} \int_{P\times S^m} \tilde{\varphi}_{\circledast}\ d\mu_{c^2g_{_P}+\tilde{\rho}^2\textsl{g}_V}\\
&=& \frac{1}{c^{\ell}|G|^2} \int_{P\times S^m}  \int_{G} \tilde{\varphi}_{\bar{*}}\cdot g\ d\mu_{g_{_G}}(g) d\mu_{c^2g_{_P}+\tilde{\rho}^2\textsl{g}_V}\\
&=& \frac{1}{c^{\ell}|G|^2} \int_{G}\int_{P\times S^m}\tilde{\varphi}_{\bar{*}}\cdot g\  d\mu_{c^2g_{_P}+\tilde{\rho}^2\textsl{g}_V}d\mu_{g_{_G}}(g)\\
&=& \frac{1}{c^{\ell}|G|^2} \int_{G}\int_{P\times S^m}  \tilde{\varphi}_{\bar{*}}\ d\mu_{c^2g_{_P}+\tilde{\rho}^2\textsl{g}_V}d\mu_{g_{_G}}(g)\\
&=& \frac{1}{c^{\ell}|G|}  \int_P\int_{S^m}  \tilde{\varphi}_{\bar{*}}\ d\mu_{\tilde{\rho}^2\textsl{g}_V}d\mu_{c^2g_{_P}}\\
&=& \frac{1}{c^{\ell}|G|}  \int_P\int_F  \tilde{\varphi}\ d\mu_{\tilde{\rho}^2g_{_F}}d\mu_{c^2g_{_P}}\\
&=& \frac{c^{n}}{|G|} \int_{P\times F}  \tilde{\varphi}\ d\mu_{g_{_P}+\tilde{\rho}^2g_{_F}}\\
&=& 0
\end{eqnarray*}
where the 4th equality is due to the fact that the $G$-action on $(P\times S^m,c^2g_{_P}+\tilde{\rho}^2\textsl{g}_V)$ is isometric and the 6th equality is due to (\ref{libraryinNY}).
Finally we can conclude that
\begin{eqnarray*}
\lambda_1(X,g_{_X}^\rho)&\geq& c^2\lambda_1(B\times S^m,c^2g_{_B}+\check{\rho}^2\textsl{g}_V)\ (1-C'\epsilon)^2-\epsilon \\
&=&  \lambda_1(B\times S^m,g_{_B}+\check{\rho}^2g_{_{\Bbb S}})\ (1-C'\epsilon)^2-\epsilon,
\end{eqnarray*}
which gives the desired inequality by taking $\epsilon>0$ arbitrarily small.

\subsection{Proof of Theorem \ref{Main-Corollary}}

Since $\rho$ is constant, $\pi: (X,g_{_X}^\rho)\rightarrow (B,g_{_B})$ is still a Riemannian submersion with totally geodesic fibers. So each fiber is a minimal submanifold and we can apply Watson's theorem \cite{Watson}.

Let $\{\psi_i|i\in \Bbb N\cup \{0\}\}$ be a complete orthonormal basis of $L^2(B)$ consisting of eigenfunctions of $\Delta_B$ with $\psi_i$ having eigenvalue $\lambda_i(B,g_{_B})$ where $\psi_0$ is a constant.
For each $i$, $\pi^*\psi_i$ is an eigenfunction of $(X,g_{_X}^\rho)$ with the same eigenvalue as $\psi_i$ and for $j\ne i$
$$\int_X \pi^*\psi_i\ \pi^*\psi_j\ d\mu_{g_{_X}^\rho}=\int_B \psi_i\ \psi_j\ d\mu_{g_{_B}}\ \rho^m\int_F d\mu_{g_{_F}}=0$$ so that
\begin{eqnarray}\label{Thanks-PNU-0}
\lambda_i(B,g_{_B}) \geq \lambda_i(X,g_{_X}^\rho).
\end{eqnarray}

Likewise let $\{\varphi_i|i\in \Bbb N\cup \{0\}\}$ be such a basis of $L^2(X)$ with $\varphi_i$ having eigenvalue $\lambda_i(X,g_{_X}^\rho)$. We wish to show that for $i\leq I$
$$\lambda_i(B,g_{_B})= \lambda_i(X,g_{_X}^\rho)$$
and $\varphi_i$ can be chosen to be a constant multiple of $\pi^*\psi_i$. We prove this by induction on $i$.

First the case of $i=0$ is obvious, since the 0-th eigenvalue is 0 and the 0-th eigenfunction is a constant function on both $B$ and $X$.
Let's suppose that it is proved up to $k\leq I-1$. Then $\pi^*\psi_0,\cdots, \pi^*\psi_k$ are some constant multiples of $\varphi_0,\cdots,\varphi_k$ respectively, so let's let $\pi^*\psi_i=a_i\varphi_i$ for each $i$.

Consider $\varphi_{k+1}$ which is not only of unit $L^2$-norm but also orthogonal to $\varphi_0,\cdots, \varphi_k$. We go through the process in the proof of Theorem \ref{Main-theorem} with $\varphi_{k+1}$ instead of $\varphi$. Then in the same way we get
\begin{eqnarray}\label{Thanks-PNU-1}
\lambda_{k+1}(X,g_{_X}^\rho)=R_{g_{_X}^\rho} (\varphi_{k+1})\geq c^2R_{c^2g_{_B}+\check{\rho}^2\textsl{g}_V}(\check{\varphi})\ (1-C'\epsilon)^2-\epsilon.
\end{eqnarray}

However this is not enough for finishing the proof. In case of higher eigenvalues one more condition needs to be checked to derive the conclusion. It is true that $\check{\varphi}$ is orthogonal to $\psi_0$, but it may not be orthogonal to $\psi_1,\cdots, \psi_k$, to which $\check{\varphi}-\sum_{i=1}^k\langle \psi_i, \check{\varphi}\rangle\psi_i$ is orthogonal. Nevertheless $\check{\varphi}$ is almost orthogonal to them in the sense that $\sum_{i=1}^k\langle \psi_i, \check{\varphi}\rangle\psi_i$ has small $L^2$-norm, and this is enough to estimate $\lambda_{k+1}(B\times S^m,c^2g_{_B}+\check{\rho}^2\textsl{g}_V)$ by using
(\ref{Thanks-PNU-1}) and $\check{\varphi}-\sum_{i=1}^k\langle \psi_i, \check{\varphi}\rangle\psi_i$ which is orthogonal to not only $\psi_0$ but also $\psi_1,\cdots, \psi_k$.

Indeed for $i=1,\cdots, k$
\begin{eqnarray*}
\int_{B\times S^m} \psi_i\ \check{\varphi}\ d\mu_{c^2g_{_B}+\check{\rho}^2\textsl{g}_V}&=&
\frac{1}{c^{\ell}|G|} \int_{P\times S^m} \pi_{_P}^*\psi_i\ \tilde{\varphi}_{\circledast}\  d\mu_{c^2g_{_P}+\tilde{\rho}^2\textsl{g}_V}\\
&=& \frac{1}{c^{\ell}|G|^2} \int_{P\times S^m} \pi_{_P}^*\psi_i \int_{G} \tilde{\varphi}_{\bar{*}}\cdot g\ d\mu_{g_{_G}}(g) d\mu_{c^2g_{_P}+\tilde{\rho}^2\textsl{g}_V}\\
&=& \frac{1}{c^{\ell}|G|^2} \int_{G}\int_{P\times S^m}(\pi_{_P}^*\psi_i)\cdot g\ \tilde{\varphi}_{\bar{*}}\cdot g\  d\mu_{c^2g_{_P}+\tilde{\rho}^2\textsl{g}_V}d\mu_{g_{_G}}(g)\\
&=& \frac{1}{c^{\ell}|G|^2}\int_{G}\int_{P\times S^m} \pi_{_P}^*\psi_i\ \tilde{\varphi}_{\bar{*}}\ d\mu_{c^2g_{_P}+\tilde{\rho}^2\textsl{g}_V}d\mu_{g_{_G}}(g)\\
&=& \frac{1}{c^{\ell}|G|} \int_{P}\pi_{_P}^*\psi_i\int_{S^m} \tilde{\varphi}_{\bar{*}}\ d\mu_{\tilde{\rho}^2\textsl{g}_V}d\mu_{c^2g_{_P}}\\
&=& \frac{1}{c^{\ell}|G|}\int_{P}\pi_{_P}^*\psi_i\int_F  \tilde{\varphi}\ d\mu_{\tilde{\rho}^2g_{_F}}d\mu_{c^2g_{_P}}\\
&=& \frac{c^{n}}{|G|}\int_{P\times F} \tilde{\pi}^*(\pi^*\psi_i)\ \tilde{\varphi}\ d\mu_{g_{_P}+\tilde{\rho}^2g_{_F}}\\
&=& \frac{c^{n}a_i}{|G|}\int_{P\times F} \tilde{\pi}^*\varphi_i\ \tilde{\varphi}\ d\mu_{g_{_P}+\tilde{\rho}^2g_{_F}}
\end{eqnarray*}
where the 3rd equality, the 4th equality, and the 6th equality are respectively due to the $G$-invariance of $\pi_{_P}^*\psi_i$, the isometric property of the $G$-action on $(P\times S^m, c^2g_{_P}+\tilde{\rho}^2\textsl{g}_V)$, and  (\ref{libraryinNY}). Since $\tilde{\varphi}$ was chosen so that $||\tilde{\varphi}-\tilde{\pi}^*\varphi_{k+1}||_{L^2}<\epsilon$ by (\ref{body-landing-2}),
\begin{eqnarray*}
|\langle \tilde{\varphi},\tilde{\pi}^*\varphi_{i}\rangle_{L^2}| &=&
|\langle \tilde{\pi}^*\varphi_{k+1}, \tilde{\pi}^*\varphi_i \rangle_{L^2}+\langle \tilde{\varphi}-\tilde{\pi}^*\varphi_{k+1}, \tilde{\pi}^*\varphi_i \rangle_{L^2}| \\
&\leq& |G|\ |\langle \varphi_{k+1}, \varphi_i \rangle_{L^2(X)}|+||\tilde{\varphi}-\tilde{\pi}^*\varphi_{k+1}||_{L^2} ||\tilde{\pi}^*\varphi_i ||_{L^2}\\
&=& 0+||\tilde{\varphi}-\tilde{\pi}^*\varphi_{k+1}||_{L^2}\sqrt{|G|}\ ||\varphi_i ||_{L^2(X)}\\
&<& \sqrt{|G|}\epsilon
\end{eqnarray*}
where the subscript $L^2$ means $L^2(P\times F,g_{_P}+\tilde{\rho}^2g_{_F})$. Thus
\begin{eqnarray}\label{toohot}
|\langle \psi_i, \check{\varphi}\rangle_{L^2(B\times S^m,c^2g_{_B}+\check{\rho}^2\textsl{g}_V)}\|< \frac{c^{n}a_i}{\sqrt{|G|}}\epsilon
\end{eqnarray}
for $i=1, \cdots, k$.

In the previous section, it was proved that the $L^2$-norm of $\check{\varphi}$ is greater than $\frac{\sqrt{c^n}}{2}$,
and as remarked at the end of the proof of Lemma \ref {leejung1}, the $L^2$-norms of $\tilde{\varphi}_{\circledast}$ and hence $\check{\varphi}$ are also bounded above by a constant. Then the $L_1^2$-norm of $\check{\varphi}$ is also bounded above by a constant, say $c'$, by (\ref{Thanks-PNU-1}). Thus we can apply Proposition \ref{yam-0} to deduce that there exists positive constants $\varepsilon$ and $\check{C}$ (depending only on $c$ and $c'$) such that
\begin{eqnarray}\label{urine}
||\sum_{i=1}^k\langle \psi_i, \check{\varphi}\rangle \psi_i||_{L_1^2}<\varepsilon
\end{eqnarray}
implies
\begin{eqnarray*}
|R_{c^2g_{_B}+\check{\rho}^2\textsl{g}_V}(\check{\varphi})-R_{c^2g_{_B}+\check{\rho}^2\textsl{g}_V}(\check{\varphi}-\sum_{i=1}^k\langle \psi_i, \check{\varphi}\rangle \psi_i)|&\leq&\check{C}||\sum_{i=1}^k\langle \psi_i, \check{\varphi}\rangle\psi_i||_{L_1^2},
\end{eqnarray*}
which is again less than $\hat{C}\epsilon$ for a constant $\hat{C}>0$ by using (\ref{toohot}). So we assume that $\epsilon$ is always taken small enough to satisfy (\ref{urine}).

Combining this with (\ref{Thanks-PNU-1}) gives
\begin{eqnarray*}
R_{g_{_X}^\rho} (\varphi_{k+1})  &\geq&
c^2 (R_{c^2g_{_B}+\check{\rho}^2\textsl{g}_V}(\check{\varphi}-\sum_{i=1}^k\langle \psi_i, \check{\varphi}\rangle\psi_i)-\hat{C}\epsilon)\ (1-C'\epsilon)^2-\epsilon\\
&\geq& c^2\lambda_{k+1}(B\times S^m,c^2g_{_B}+\check{\rho}^2\textsl{g}_V)\ (1-C'\epsilon)^2-\bar{C}\epsilon \\
&=&  \lambda_{k+1}(B\times S^m,g_{_B}+\check{\rho}^2g_{_{\Bbb S}})\ (1-C'\epsilon)^2-\bar{C}\epsilon,
\end{eqnarray*}
for a constant $\bar{C}>0$, and hence
\begin{eqnarray*}
\lambda_{k+1}(X,g_{_X}^\rho)&\geq& \lambda_{k+1}(B\times S^m,g_{_B}+\rho^2g_{_{\Bbb S}})\\   &=& \lambda_{k+1}(B,g_{_B})
\end{eqnarray*}
where the last equality is obtained by the condition that $$\lambda_{k+1}(B,g_{_B})\leq \lambda_I(B,g_{_B})\leq \frac{m}{\rho^2}=\lambda_{1}(S^m,\rho^2g_{_{\Bbb S}}).$$
Since we also have $\lambda_{k+1}(B,g_{_B}) \geq \lambda_{k+1}(X,g_{_X}^\rho)$
from (\ref{Thanks-PNU-0}), we finally have $$\lambda_{k+1}(B,g_{_B})=\lambda_{k+1}(X,g_{_X}^\rho).$$
Using that $\pi^*\psi_{k+1}$ is already an eigenfunction on $X$ with eigenvalue equal to $\lambda_{k+1}(B,g_{_B})$ and it is orthogonal to $\varphi_{0},\cdots, \varphi_{k}$, we can choose $$\frac{\pi^*\psi_{k+1}}{||\pi^*\psi_{k+1}||_{L^2(X)}}$$ as $\varphi_{k+1}$. This finishes the induction proof.

\begin{Remark}\label{my_sister}
The following fact would be more useful practically.
The same conclusion is obtained when $Ric_{g_{_F}} \geq (m-1)a^2$ and $\lambda_I(B,g_{_B})\leq \frac{m}{\rho^2}a^2$ by considering the homothety by $1/a^2$.
\end{Remark}

\section{Examples of exact computation of $\lambda_i$}\label{Main-Corollary-Example}

Now we compute $\lambda_i$ in some typical fiber bundles by applying the above obtained result. The important thing is the construction of manifolds to which our theorem is applicable, so we shall state only about $\lambda_1$ which is of more interest. The most well-known Riemannian manifolds of positive Ricci curvature would be $\Bbb S^m$ for $m\geq 2$ and $\Bbb CP^n$ for $n\geq 1$ with the Fubini-Study metric.  Recall that $\Bbb S^m=(S^m,g_{_{\Bbb S}})$ denotes the round $m$-sphere of constant curvature 1. Their isometry groups are $O(m+1)$ and $PU(n+1)\rtimes\Bbb Z_2$ respectively, so we look for bundles with these groups as a structure group. First let's consider sphere bundles.

\subsection{Sphere bundles}
For the topological classification of sphere bundles, recall that the orientation-preserving diffeomorphism group $\textrm{Diff}_+(S^m)$ of $S^m$ for $m=1,2,3$ is homotopy-equivalent to $SO(m+1)$ and hence any orientable $S^m$-bundle $X$ over any smooth manifold $B$ for those $m$ arises as an associated bundle of a principal $SO(m+1)$-bundle over $B$.

Since $\pi_1(SO(3))=\Bbb Z_2$, there are only two kinds of $S^2$-bundles over $S^2$, namely $S^2\times S^2$ and the nontrivial bundle $S^2\tilde{\times}S^2$.
It's well-known that $S^2\tilde{\times}S^2$ can be realized as a holomorphic bundle, namely $\Bbb CP^2\# \overline{\Bbb CP}^2$.
In the same way there are only two $S^3$-bundles over $S^2$,
since $\pi_1(SO(4))=\Bbb Z_2$.
For higher $m$, we only consider $S^m$-bundles with structure group $SO(m+1)$ so that they come from principal $SO(m+1)$-bundles.

By taking any $SO(m+1)$-connection on these nontrivial principal bundles, all these nontrivial $S^m$-bundles $S^n\tilde{\times}S^m$ over $S^n$ admit Riemannian submersions onto $\Bbb S^n$ with totally geodesic fibers isometric to $(S^m,\rho^2g_{_{\Bbb S}})$ where $\rho$ is a constant in $(0,\sqrt{\frac{m}{n}}]$, and for those metrics
\begin{eqnarray*}
\lambda_1(S^n\tilde{\times}S^m)&=& \lambda_1(\Bbb S^n)\\ &=&  n
\end{eqnarray*}
by Theorem \ref{Main-Corollary}.

The $S^3$-bundles over $S^4$ are worthy of special attention. Their structure groups are reduced to $SO(4)$, and principal $SO(4)$-bundles over $S^4$ are classified by
\begin{eqnarray*}
\pi_3(SO(4))&=& \pi_3(Spin(4))\\ &=& \pi_3(SU(2)\times SU(2))\\ &=& \pi_3(S^3\times S^3)\\ &=& \Bbb Z\oplus \Bbb Z.
\end{eqnarray*}
However, the diffeomorphic classification of the associated $S^3$-bundles is not so simple. One of them is $S^7$ and some of them are well-known as exotic 7-spheres. While certain families are mutually diffeomorphic, some others can not be homotopically equivalent to each other.  The reader may refer to \cite{crowley}. Nevertheless it is interesting that they all have the same first eigenvalue which is equal to $\lambda_1(\Bbb S^4)= 4$, if the fiber-resizing constant $\rho$ is in $(0,\frac{\sqrt{3}}{2}]$.

A general way of obtaining a Riemannian submersion with totally geodesic fibers isometric to $\Bbb S^{r-1}$ would be the unit sphere bundle $S(E)$ of a real vector bundle $E$ of rank $r\geq 3$ with an inner product. Let $B$ be its base Riemannian manifold and $P_O$ be the principal $O(r)$-bundle consisting of orthonormal frames in $E$. Then $S(E)$ is an $S^{r-1}$-bundle over $B$ associated to $P_O$. By taking any $O(r)$-connection on $P_O$, $S(E)$ can be endowed with a Riemannian submersion over $B$ with totally geodesic fibers isometric to $(S^{r-1},\rho^2g_{_{\Bbb S}})$ where $\rho$ is a constant in $(0,\sqrt{\frac{r-1}{\lambda_1(B)}}]$. For any of these metrics,  we have
$$\lambda_1(S(E))=\lambda_1(B)$$ by Theorem \ref{Main-Corollary}.
In particular one may take $E$ to be the tangent bundle $TB$ of $B$ and the Levi-Civita connection of $B$.

\subsection{Homogeneous CROSS}

As is well-known,
many homogeneous manifolds admit Riemannian submersions with totally geodesic fibers. Indeed for compact Lie groups $K\subset H\subset G$, the obvious projection map $\pi: G/K\rightarrow G/H$ with fiber $H/K$ has such a structure, where all three quotient manifolds are endowed with homogeneous metrics defined as follows \cite{Beber, Besse}.

To define a homogeneous metric on a quotient manifold $G/H$, it's enough to choose an inner product on the tangent space $T_x(G/H)$ at one point $x:=eH \in G/H$, which is invariant under the isotropy representation of $H$ in $T_x(G/H)$. The correspondence
$$V\in T_eG\ \  \mapsto \ \  \frac{d}{dt}|_{t=0}(exp(tV)\cdot x) \in T_x(G/H)$$
gives the identification of an $Ad(H)$-invariant complement $\mathfrak{m}$ to $T_eH$ in $T_eG$ with the adjoint representation of $H$ and $T_x(G/H)$ with the isotropy representation of $H$, and hence there is a one-to-one correspondence between $G$-invariant metrics on $G/H$ and $Ad(H)$-invariant inner product on $\mathfrak{m}$.
Likewise a $H$-invariant homogeneous metric on $H/K$ can be defined from a $Ad(K)$-invariant inner product on an $Ad(K)$-invariant complement $\mathfrak{p}$ to $T_eK$ in $T_eH$, and then a $G$-invariant metric on $G/K$ is induced by the orthogonal direct sum of two inner products on $\mathfrak{p}\oplus \mathfrak{m}$. By the construction $\pi$ automatically satisfies the property of a Riemannian submersion at $eK\in G/K$ and hence everywhere by the $G$-equivariance of $\pi$ and hence $\pi_*$.

We shall consider only homogeneous metrics on a compact rank one symmetric space (CROSS) such as metrics of the generalized Berger spheres and the squashed $\Bbb CP^n$, and apply Theorem \ref{Main-Corollary} to compute their $\lambda_1$ when the fiber has sufficiently positive Ricci curvature. Not only all homogenous metrics on CROSSes but also  their $\lambda_1$ are now completely known \cite{BLP}, so we can check if our computations match well with them. For more examples one can apply the above construction to other homogeneous manifolds in \cite{Besse, Tani, urakawa}.

When $G=Sp(q+1), H=Sp(q)\times Sp(1), K=Sp(q)$ for $q\geq 1$, one can get $$\pi: \Bbb S^{4q+3}\rightarrow (\Bbb HP^q, g_{_{QK}})$$ with fiber isometric to $\Bbb S^3$ where $g_{_{QK}}$ is the quaternionic K\"ahler metric with sectional curvature ranging in $[1,4]$ and $(\Bbb HP^1, g_{_{QK}})$ is isometric to $(S^4,\frac{1}{4}g_{_{\Bbb S}})$.
Shrinking the fibers constantly by $\rho\in (0,\frac{\sqrt{6}}{4\sqrt{q+1}}]$, $$\lambda_1(\Bbb HP^q, g_{_{QK}})=8(q+1)\leq \frac{3}{\rho^2},$$
and hence this squashed $S^{4q+3}$ has
\begin{eqnarray*}
\lambda_1 = 8(q+1)
\end{eqnarray*}
by Theorem \ref{Main-Corollary}. Our computation coincides with that of S. Tanno \cite{Tanno} whose method computed $\lambda_1$ for all values of $\rho>0$ as follows :
$$\lambda_1=\left\{
  \begin{array}{ll}
    8(q+1) & \hbox{if $\rho\leq\frac{\sqrt{3}}{2\sqrt{q+2}}$} \\
    4q+\frac{3}{\rho^2} & \hbox{otherwise.}
  \end{array}
\right.$$

When $G=Sp(q+1), H=Sp(q)\times Sp(1), K=Sp(q)\times U(1)$ for $q\geq 1$, one can obtain $$\pi: (\Bbb CP^{2q+1},g_{_{FS}}) \rightarrow (\Bbb HP^q, g_{_{QK}})$$ where fibers are round $S^2$ with curvature 4 and $g_{_{FS}}$ denotes the Fubini-Study metric with curvature ranging between 1 and 4.
Shrinking the fibers constantly by $\rho\in (0,\frac{1}{\sqrt{q+1}}]$, $$\lambda_1(\Bbb HP^q, g_{_{QK}})=8(q+1)\leq \frac{2}{\rho^2}\cdot 4,$$
and hence by Remark \ref{my_sister} (of Theorem \ref{Main-Corollary}) this squashed $ \Bbb CP^{2q+1}$ has
\begin{eqnarray*}
\lambda_1 = 8(q+1)
\end{eqnarray*}
which is equal to $\lambda_1(\Bbb CP^{2q+1},g_{_{FS}})$. In this case, our computation coincides with that of Bettiol, Lauret, and Piccione \cite{BLP} whose method computed $\lambda_1$ for all values of $\rho>0$ as follows :
$$\lambda_1=\left\{
  \begin{array}{ll}
    8(q+1) & \hbox{if $\rho\leq 1$} \\
    8(q+\frac{1}{\rho^2}) & \hbox{otherwise.}
  \end{array}
\right.$$




In case of $S^{15}$, one can also view $\Bbb S^{15}$ as $Spin(9)/Spin(7)$ by using the representation of $Spin(9)$ into $SO(16)$, where the isotropy group $Spin(7)$ sits in $Spin(9)$ as the image of the usual $Spin(7)< Spin(8)$ under an ``outer" automorphism of $Spin(8)$. Thus when $G=Spin(9), H=Spin(8), K=Spin(7)$ one can have $$\pi:\Bbb S^{15}\rightarrow (S^8, \frac{1}{4}g_{_{\Bbb S}})$$ with fibers isometric to $\Bbb S^7$.
Shrinking the fibers constantly by $\rho\in (0,\frac{\sqrt{14}}{8}]$,  $$\lambda_1(S^8, \frac{1}{4}g_{_{\Bbb S}})=32\leq \frac{7}{\rho^2},$$
and hence this squashed $S^{15}$ has
\begin{eqnarray*}
\lambda_1 = 32
\end{eqnarray*}
by Theorem \ref{Main-Corollary}. In this case too, our computation coincides with that of Bettiol and Piccione \cite{BP} which states more generally that
$$\lambda_1=\left\{
  \begin{array}{ll}
    32 & \hbox{if $\rho\leq \frac{\sqrt{7}}{2\sqrt{6}}$} \\
    8+\frac{7}{\rho^2} & \hbox{otherwise.}
  \end{array}
\right.$$

Riemannian submersions of $\Bbb S^n$ with totally geodesic fibers are completely classified by Escobales \cite{Escobales-1} and Ranjan \cite{Ranjan}.
The only remaining case is the Hopf fibration $\pi: S^{2q+1}\rightarrow \Bbb CP^q$ which also arises from the above-mentioned construction. But in this case fibers are $S^1$, so our result is not applicable.

\subsection{Projective bundles}

As a general way of obtaining a $\Bbb CP^{r-1}$-bundle, one can consider
the projectivization $\Bbb P(E_{\Bbb C})$ of a complex vector bundle $E_{\Bbb C}$ of rank $r\geq 2$ over a base Riemannian manifold $M$. Equipping $E_{\Bbb C}$ with a hermitian metric, $E_{\Bbb C}$ is the associated bundle to the unitary frame bundle $P_{U}$. Since the left action of $U(r)$ on $\Bbb C^r$ is linear, it commutes with $\Bbb C^*$-action so that it descends to a left action on $(\Bbb C^r-\{0\})/\Bbb C^*=\Bbb CP^{r-1}$. Therefore the $\Bbb CP^{r-1}$-bundle $\Bbb P(E_{\Bbb C})$ is also an associated bundle to $P_U$.

Since the above left action of $U(r)$ on $\Bbb CP^{r-1}$ is isometric w.r.t. the Fubini-Study metric $g_{_{FS}}$, any $U(r)$-connection on $P_U$ endows
$$\pi:\Bbb P(E_{\Bbb C})\rightarrow M$$ with a Riemannian submersion with totally geodesic fibers isometric to $(\Bbb CP^{r-1},\rho^2g_{_{FS}})$. Using that the Ricci curvature of $(\Bbb CP^{r-1},g_{_{FS}})$ is $2r$,  Remark \ref{my_sister} (of Theorem \ref{Main-Corollary}) gives $$\lambda_1(\Bbb P(E_{\Bbb C}))=\lambda_1(M)$$
when the constant $\rho$ is in $(0,\sqrt{\frac{4r(r-1)}{(2r-3)\lambda_1(M)}}]$.

The projectivization $\Bbb P(E_{\Bbb C})$ of a rank 2 complex vector bundle $E_{\Bbb C}$ over a Riemann surface $\Sigma$ is well-known as a geometrically ruled surface of which $\Bbb CP^2\# \overline{\Bbb CP}^2$ is a kind. By taking any $U(2)$-connection on $E_{\Bbb C}$ and any Riemannian metic on $\Sigma$, we have a Riemannian submersion with totally geodesic fibers isometric to $(\Bbb CP^1,\rho^2g_{_{FS}})$, and its $\lambda_1$ gets equal to $\lambda_1(\Sigma)$ when $\rho$ gets sufficiently small.

Generalizing $\Bbb CP^2\# \overline{\Bbb CP}^2$, the blow-up of $\Bbb CP^n$ along any linear subspace $\Bbb CP^{r-1}$ for $r\geq 1$ is the projective bundle of a rank $(r+1)$ vector bundle $\mathcal{O}_{\Bbb CP^{n-r}}(1)\oplus \mathcal{O}^r_{\Bbb CP^{n-r}}$ over $\Bbb CP^{n-r}$.(Consider the linear projection from $\Bbb CP^{r-1}\subset \Bbb CP^n$ to $\Bbb CP^{n-r}\subset \Bbb CP^{n}$ disjoint from the $\Bbb CP^{r-1}$ or refer to \cite{Eisenbud}.) Interestingly, the blow-ups of $\Bbb CP^n$ along certain non-linear subvarieties also turn out to be projective bundles over a projective space, and the reader may refer to \cite{Ein, Galkin, Ray}.

One can also view $\Bbb P(E_{\Bbb C})$ as $S(E_{\Bbb C})/S^1$ where the $S^1$-action is the fiberwise quotient of the unit sphere bundle of $E_{\Bbb C}$.
If $M$ is an almost hermitian manifold, then one can take the Hermitian vector bundle $E_{\Bbb C}$ to be its tangent bundle $TM$.
In particular when $E_{\Bbb C}$ is the tangent bundle of a K\"ahler manifold $M$, a canonical connection on $P_U$ would be the Levi-Civita connection of $M$ which is torsion-free.

In the same way one can construct the quaternionic projectivization $$\Bbb P(E_{\Bbb H})=S(E_{\Bbb H})/Sp(1)$$ of a quaternionic vector bundle $E_{\Bbb H}$ of rank $r\geq 2$. By endowing $E_{\Bbb H}$ with a quaternionic Hermitian metric which always exists, the structure group of $E_{\Bbb H}$ is reduced to $$Sp(r)=GL(r,\Bbb H)\cap U(2r)$$ and $E_{\Bbb H}$ is the associated bundle to a principal $Sp(r)$-bundle $P_S$.
Since the $\Bbb H$-linear left $Sp(r)$ action on $\Bbb H^r$ commuting with the right multiplication of $\Bbb H^*=\Bbb H\backslash\{0\}$ descends to a left action on $\Bbb HP^{r-1}$, $\Bbb P(E_{\Bbb H})$ is also associated to $P_S$.

Using the quaternionic K\"ahler metric $g_{_{QK}}$ on $\Bbb HP^{r-1}$ which is isometric under the $Sp(r)$ action and choosing any $Sp(r)$-connection on $P_S$, one can endow $\Bbb P(E_{\Bbb H})$ with a Riemannian submersion with totally geodesic fibers isometric to $(\Bbb HP^{r-1},\rho^2g_{_{QK}})$ which has Ricci curvature equal to $\frac{4(r+1)}{\rho^2}$. Hence $\lambda_1(\Bbb P(E_{\Bbb H}))$ for this metric is equal to that of its base Riemannian manifold when $\rho$ gets sufficiently small.

In particular when $E_{\Bbb H}$ is the tangent bundle of a hyperk\"ahler manifold $M$, a canonical connection on $P_S$ would be the Levi-Civita connection of $M$ which is torsion-free.

\begin{Remark}
In a similar way, one may also consider the real projectivization $\Bbb P(E)$ of a real vector bundle $E$ of rank $r\geq 3$, and get the same type of conclusion. In this case $E$ is just the fiberwise $\Bbb Z_2$-quotient of $S(E)$.
\end{Remark}

\section{$\lambda_1$ comparison w.r.t. Dirichlet boundary condition}


\subsection{Proof of Theorem \ref{selfish}}
Recall that on a compact Riemannian manifold $(\mathcal{X},\textbf{g})$ with piecewise-smooth boundary and smooth interior $\stackrel{\circ}{\mathcal{X}}:=\mathcal{X}-\partial\mathcal{X}$, $\lambda_1$ of $\Delta_{(\mathcal{X},\textbf{g})}$ w.r.t. Dirichlet boundary condition is expressed as
$$\lambda_1=\min\{R_{\textbf{g}}(f)|f\in L_{0,1}^2(\mathcal{X})\backslash\{0\}\}$$
and the first eigenfunction vanishing at $\partial\mathcal{X}$ is a minimizer of $R_{\textbf{g}}$, which is smooth on $\stackrel{\circ}{\mathcal{X}}$. (\cite{Chavel, Courant-Hilbert, Mao})

Let $\epsilon\ll 1$ be any small positive number, and take $\varphi\in C_c^\infty({X})$
satisfying
\begin{eqnarray}\label{plead}
|\lambda_1(X)-R_{h+g_{_{\Bbb E}}}(\varphi)|<\epsilon\ \ \ \textrm{and}\ \ \ \int_X\varphi^2 d\mu_{h+g_{_{\Bbb E}}}=1.
\end{eqnarray}
By Lemma \ref{Zenith}, $|\varphi|$ belongs to $L_1^2(X)$ and satisfies
$$R_{h+g_{_{\Bbb E}}}(-|\varphi|)\leq R_{h+g_{_{\Bbb E}}}(\varphi).$$

To cook up $\psi\in L_{0,1}^2(X^\star)$ with smaller Rayleigh quotient, the basic idea is that we first perturb $-|\varphi|$ a little to get a smooth generic-fiberwise Morse function $\tilde{\varphi}$ and then take its fiberwise Euclidean rearrangement $\tilde{\varphi}_{\bar{\star}}$. For technical reasons we need to take several modifications along the way, and make sure that the increase of Rayleigh quotients is kept arbitrarily small.

\begin{Lemma}\label{hebrew}
There exists $\hat{\varphi}\in C_c^\infty({X})$ satisfying  $$\hat{\varphi}\leq 0\ \ \ \ \textrm{and}\ \ \ \ ||\hat{\varphi}-(-|\varphi|)||_{L_1^2}<\epsilon.$$
\end{Lemma}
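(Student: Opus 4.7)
The plan is to produce $\hat\varphi$ as a smooth compactly supported regularization of $-|\varphi|$ built from $\varphi$ itself, using the classical substitute $\sqrt{\varphi^2+\delta^2}-\delta$ for $|\varphi|$. For each small $\delta>0$ I set
\[
\hat\varphi_\delta \ := \ -\bigl(\sqrt{\varphi^{\,2}+\delta^{2}}-\delta\,\bigr),
\]
and I will show $\hat\varphi_\delta\in C_c^\infty(X)$, $\hat\varphi_\delta\le 0$, and $\hat\varphi_\delta\to -|\varphi|$ in $L_1^{2}(X)$ as $\delta\downarrow 0$; taking $\delta$ small enough then yields $\hat\varphi$.

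The first three properties are immediate. Smoothness follows from smoothness of $\varphi$ and of $t\mapsto\sqrt{t+\delta^{2}}$ on $t\ge 0$. Non-positivity follows from $\sqrt{\varphi^{2}+\delta^{2}}\ge \delta$. Compact support follows because at any point $x\notin\operatorname{supp}\varphi$ one has $\varphi(x)=0$, hence $\hat\varphi_\delta(x)=-(\delta-\delta)=0$; so $\operatorname{supp}\hat\varphi_\delta\subseteq\operatorname{supp}\varphi\Subset X$.

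For $L^{2}$ convergence I use the elementary estimate $0\le\sqrt{a^{2}+\delta^{2}}-|a|=\delta^{2}/(\sqrt{a^{2}+\delta^{2}}+|a|)\le\delta$, so $|\hat\varphi_\delta-(-|\varphi|)|\le 2\delta$ pointwise, and the difference is supported in the fixed compact set $\operatorname{supp}\varphi$; hence $\|\hat\varphi_\delta+|\varphi|\|_{L^{2}}\to 0$. For the $L^{2}$ convergence of differentials, the chain rule gives
\[
 d\hat\varphi_\delta \ = \ -\frac{\varphi}{\sqrt{\varphi^{2}+\delta^{2}}}\,d\varphi,
\]
so $|d\hat\varphi_\delta|_{h+g_{_{\Bbb E}}}\le|d\varphi|_{h+g_{_{\Bbb E}}}$, uniformly in $\delta$. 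By Lemma \ref{Zenith}, $d(-|\varphi|)$ equals $-d\varphi$ on $\{\varphi>0\}$, $d\varphi$ on $\{\varphi<0\}$, and $0$ a.e.\ on $\{\varphi=0\}$. A pointwise check at each $x$ (splitting into the cases $\varphi(x)\ne 0$, where $\varphi/\sqrt{\varphi^{2}+\delta^{2}}\to\operatorname{sgn}\varphi(x)$, and $\varphi(x)=0$, where $|d\hat\varphi_\delta(x)|\le\delta^{-1}\cdot 0\cdot|d\varphi(x)|=0$) shows that $d\hat\varphi_\delta\to d(-|\varphi|)$ pointwise a.e.\ on $X$. Dominated convergence with dominating function $|d\varphi|$ then yields $\|d\hat\varphi_\delta-d(-|\varphi|)\|_{L^{2}}\to 0$.

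The only mildly delicate point is the pointwise a.e.\ convergence of the differentials on the set $\{\varphi=0\}$, where one must invoke the precise form of $d|\varphi|$ given by Lemma \ref{Zenith} rather than naively differentiating $|\varphi|$. Everything else is routine, and choosing $\delta>0$ small enough so that $\|\hat\varphi_\delta-(-|\varphi|)\|_{L_1^{2}}<\epsilon$ gives the required $\hat\varphi:=\hat\varphi_\delta$.
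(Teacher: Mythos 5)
Your proof is correct, and it takes a genuinely different (and arguably slicker) route than the paper's. The paper proves the lemma by the generic mollification machinery: it covers $\operatorname{supp}\varphi$ by finitely many coordinate charts $U_\alpha$ with compact closure, takes a subordinate partition of unity $\{\eta_\alpha\}$, convolves each $|\varphi|\eta_\alpha$ with a mollifier in local coordinates to produce $\varphi_\alpha\in C_c^\infty(U_\alpha)$ with $\|\varphi_\alpha-|\varphi|\eta_\alpha\|_{L_1^2}<\epsilon/\nu$, and sets $\hat\varphi=-\sum_\alpha\varphi_\alpha$; non-positivity comes from mollification preserving non-negativity. Your approach replaces all of that with the closed-form approximation $\hat\varphi_\delta=-(\sqrt{\varphi^2+\delta^2}-\delta)$, verifying smoothness, non-positivity, and compact support directly, and using the explicit pointwise bound $0\le\sqrt{a^2+\delta^2}-|a|\le\delta$ together with dominated convergence (dominating function $2|d\varphi|$) for the $L_1^2$ convergence. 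This is self-contained and avoids local coordinates, partitions of unity, and mollifiers entirely. The trade-off is that your argument exploits the smoothness of $\varphi$ in an essential way — the chain-rule formula $d\hat\varphi_\delta=-(\varphi/\sqrt{\varphi^2+\delta^2})\,d\varphi$ needs $\varphi$ smooth — whereas the paper's mollification argument would carry over verbatim to any non-negative compactly supported $L_1^2$ function. Since the hypotheses here guarantee $\varphi\in C_c^\infty(X)$, that trade-off is immaterial, and your version is preferable for being shorter and more transparent. (Two small cosmetic notes: your pointwise bound $|\hat\varphi_\delta+|\varphi||\le 2\delta$ can be sharpened to $\le\delta$, since $\hat\varphi_\delta+|\varphi|=\delta-(\sqrt{\varphi^2+\delta^2}-|\varphi|)\in[0,\delta]$; and on $\{\varphi=0\}$ you can simply note $d\hat\varphi_\delta=0$ exactly, without the $\delta^{-1}\cdot 0$ contortion.)
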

\begin{proof}
It can be proved by the usual method using a partition of unity and a mollifier. At any point $p\in \textrm{supp}(\varphi)$ we take an open neighborhood $U\subset {X}$ giving a local coordinate chart and having compact closure in ${X}$.
Since all such $U$ together gives an open cover of $\textrm{supp}(\varphi)$, we can extract a finite subcover $\cup_{\alpha=1}^\nu U_\alpha$.  Let $\{\eta_\alpha|\alpha=1,\cdots,\nu\}$ be a smooth partition of unity subordinate to $\{U_\alpha|\alpha=1,\cdots,\nu\}$ such that each $\textrm{supp}(\eta_\alpha)$ is compact.

By taking the convolution of $|\varphi|\eta_\alpha$ with a mollifier in a local coordinate $U_\alpha$, we can get its smooth approximation $\varphi_{\alpha}$ such that $$||\varphi_{\alpha}-|\varphi|\eta_\alpha||_{L_1^2}<\frac{\epsilon}{\nu}
\ \ \ \ \ \textrm{and}\ \ \ \  \textrm{supp}(\varphi_{\alpha})\subset\subset U_\alpha.$$
Defining $\hat{\varphi}$ by $-\sum_{\alpha=1}^\nu\varphi_{\alpha}$, we have
\begin{eqnarray*}
||\hat{\varphi}+|\varphi|\ ||_{L_1^2} &=& ||-\sum_{\alpha=1}^\nu\varphi_{\alpha}+\sum_{\alpha=1}^\nu\eta_\alpha|\varphi|\ ||_{L_1^2}\\
&\leq& \sum_{\alpha=1}^\nu||-\varphi_\alpha+|\varphi|\eta_\alpha||_{L_1^2}\\ &<& \epsilon.
\end{eqnarray*}

Since each $|\varphi|\eta_\alpha$ and a mollifier are nonnegative functions, $\hat{\varphi}$ is nonpositive.
\end{proof}

By the above lemma,
\begin{eqnarray}\label{SujinLee1}
||\hat{\varphi}||_{L_1^2} &<& ||\ |\varphi|\ ||_{L_1^2}+\epsilon\nonumber \\ &=& \sqrt{1+R_{h+g_{_{\Bbb E}}}(|\varphi|)}+\epsilon\nonumber  \\ &\leq& \sqrt{1+R_{h+g_{_{\Bbb E}}}(\varphi)}+\epsilon\nonumber  \\ &<& \sqrt{1+\lambda_1(X)+\epsilon}+\epsilon\nonumber  \\ &<&   \sqrt{2+\lambda_1(X)}+1.
\end{eqnarray}

From now on $C',C'',C'''$ and $C_i$ for $i\in \Bbb N$ denote some positive constants independent of $\epsilon$.

\begin{Lemma}
$|R_{h+g_{_{\Bbb E}}}(\hat{\varphi})-R_{h+g_{_{\Bbb E}}}(-|\varphi|)|<C_1\epsilon$.
\end{Lemma}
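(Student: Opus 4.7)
The plan is to invoke Proposition \ref{yam-0} directly with $\varphi=-|\varphi|$ (the function obtained from (\ref{plead}) after applying Lemma \ref{Zenith}) and $f=\hat{\varphi}-(-|\varphi|)=\hat{\varphi}+|\varphi|$, so that $\hat{\varphi}=\varphi+f$. The hypothesis $\|f\|_{L_1^2}<\epsilon$ is immediate from Lemma \ref{hebrew}, so the task reduces to exhibiting fixed constants $C_1,C_2>0$ (independent of $\epsilon$) for which $-|\varphi|$ belongs to the set $\mathfrak{A}$ of that proposition.

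First I would bound the $L^2$-norm from below: the normalization in (\ref{plead}) and Lemma \ref{Zenith} give $\|-|\varphi|\|_{L^2}^2=\int_X\varphi^2\,d\mu_{h+g_{_{\Bbb E}}}=1$, so $\|-|\varphi|\|_{L^2}>1/2$, providing an admissible $C_2$. Next I would bound the $L_1^2$-norm from above using Lemma \ref{Zenith} again, which gives $|d(-|\varphi|)|\le|d\varphi|$ a.e., and therefore
\begin{equation*}
\|-|\varphi|\|_{L_1^2}^2 = 1+\int_X|d|\varphi||^2\,d\mu_{h+g_{_{\Bbb E}}} \le 1+\int_X|d\varphi|^2\,d\mu_{h+g_{_{\Bbb E}}} = 1+R_{h+g_{_{\Bbb E}}}(\varphi) < 1+\lambda_1(X)+\epsilon,
\end{equation*}
which is bounded by a constant (say $2+\lambda_1(X)$) independent of $\epsilon$, yielding an admissible $C_1$.

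With these $C_1,C_2$ fixed, Proposition \ref{yam-0} supplies constants $\varepsilon_0$ and $\check{C}$, depending only on $C_1,C_2$, such that whenever $\|f\|_{L_1^2}<\varepsilon_0$ one has
\begin{equation*}
\bigl|R_{h+g_{_{\Bbb E}}}(-|\varphi|)-R_{h+g_{_{\Bbb E}}}(-|\varphi|+f)\bigr|\le \check{C}\|f\|_{L_1^2}.
\end{equation*}
Assuming at the outset that $\epsilon<\varepsilon_0$ (permissible since we may shrink $\epsilon$ freely), the inequality from Lemma \ref{hebrew} yields the desired estimate with $C_1:=\check{C}$. The only potentially delicate point is verifying that the constants produced by Proposition \ref{yam-0} are indeed independent of $\epsilon$, but this is transparent from its statement since they depend only on the bounds $C_1,C_2$ for the ambient class $\mathfrak{A}$, both of which were fixed before any perturbation. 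Hence no substantive obstacle arises; the lemma is essentially a direct corollary of Proposition \ref{yam-0} combined with the approximation built in Lemma \ref{hebrew}.
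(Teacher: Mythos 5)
Your argument is correct and reaches the same conclusion, but it repackages the reasoning slightly differently from the paper. The paper proves the lemma ``by hand'': it first bounds $|\int_X(|d\hat{\varphi}|^2-|d|\varphi||^2)\,d\mu|$ and $|\int_X(\hat{\varphi}^2-|\varphi|^2)\,d\mu|$ by $C'\epsilon$ via Lemma \ref{Zenith} together with the a priori bound (\ref{SujinLee1}) on $\|\hat{\varphi}\|_{L_1^2}$, and then feeds these bounds directly into Lemma \ref{Sunukjian}. You instead invoke Proposition \ref{yam-0} as a black box, after verifying that $-|\varphi|$ lies in the set $\mathfrak{A}$; this is morally the same argument, since the proof of Proposition \ref{yam-0} itself consists of exactly those Hölder-type estimates followed by Lemma \ref{Sunukjian}. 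Your route is cleaner in that it avoids re-deriving the estimates, while the paper's route stays self-contained at this point in the Dirichlet-boundary section.

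One small point you should address explicitly: Proposition \ref{yam-0} is stated for a \emph{smooth closed} Riemannian manifold $(M,g)$, whereas $X$ here is a bounded domain with piecewise-smooth boundary, and the relevant function space is $L_{0,1}^2(X)$. The proposition's statement therefore does not literally apply. This is harmless --- the proof of Proposition \ref{yam-0} uses only Hölder's inequality and the purely numerical Lemma \ref{Sunukjian}, so it carries over verbatim to $L_1^2$ functions on a bounded domain --- but a careful write-up should note this, or simply apply Lemma \ref{Sunukjian} directly as the paper does. Also, a purely notational caution: you reuse the symbol $C_1$ both for the $L_1^2$ upper bound defining $\mathfrak{A}$ and for the constant in the lemma's conclusion; these are different constants and should be distinguished to avoid confusion.
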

\begin{proof}
By Lemma \ref{Zenith} and (\ref{SujinLee1}),
\begin{eqnarray*}
|\int_X (|d\hat{\varphi}|^2-|d|\varphi||^2)\   d\mu_{h+g_{_{\Bbb E}}}| &\leq& ||\hat{\varphi}+|\varphi|\ ||_{L_1^2}(||\hat{\varphi}||_{L_1^2}+||\ |\varphi|\ ||_{L_1^2})\\  &\leq& ||\hat{\varphi}+|\varphi|\ ||_{L_1^2}(2||\hat{\varphi}||_{L_1^2}+||\hat{\varphi}+|\varphi|\ ||_{L_1^2})\\&<& \epsilon(2(\sqrt{2+\lambda_1(X)}+1)+\epsilon)\\ &<& C'\epsilon,
\end{eqnarray*}
and
\begin{eqnarray}\label{Kimdaewhan}
|\int_X (\hat{\varphi}^2-|\varphi|^2)\   d\mu_{h+g_{_{\Bbb E}}}|  &\leq& (|| \hat{\varphi}+|\varphi|\ ||_{L_1^2})(||\hat{\varphi}||_{L_1^2}+||\ |\varphi|\ ||_{L_1^2})\nonumber\\ &<&  C'\epsilon.
\end{eqnarray}
Putting together these and $\int_X|\varphi|^2 d\mu_{h+g_{_{\Bbb E}}}=1$, the desired inequality is deduced from Lemma \ref{Sunukjian}.
\end{proof}

We can take $\epsilon'\in (0,\epsilon)$ such that any open set $W$ satisfying  $$\{z\in X|\hat{\varphi}(z)<-\epsilon'\}\subset W\subset X$$ satisfies
$$|\int_{X-W}\hat{\varphi}^2 d\mu_{h+g_{_{\Bbb E}}}|<\epsilon\ \ \ \ \textrm{and}\ \ \ \  |\int_{X-W}|d\hat{\varphi}|^2 d\mu_{h+g_{_{\Bbb E}}}|<\epsilon.$$
For such $W$
\begin{eqnarray}\label{simon2}
|\int_{W}\hat{\varphi}^2 d\mu_{h+g_{_{\Bbb E}}}-1| &\leq & |\int_{W}\hat{\varphi}^2 d\mu_{h+g_{_{\Bbb E}}}-\int_{X}\hat{\varphi}^2 d\mu_{h+g_{_{\Bbb E}}}| \nonumber\\ & & + |\int_{X}\hat{\varphi}^2 d\mu_{h+g_{_{\Bbb E}}}-\int_{X}\varphi^2 d\mu_{h+g_{_{\Bbb E}}}| \nonumber\\ &<&
(1+C')\epsilon
\end{eqnarray}
by (\ref{Kimdaewhan}), and hence by using Lemma \ref{Sunukjian}
$$|R_{h+g_{_{\Bbb E}}}(\hat{\varphi})-R_{h+g_{_{\Bbb E}}}(\hat{\varphi}|_W)|<C_2\epsilon$$ for any such $W$, where
$R_{h+g_{_{\Bbb E}}}(\hat{\varphi}|_W)$ denotes $$\frac{\int_W|\hat{\varphi}|^2 d\mu_{h+g_{_{\Bbb E}}}}{\int_W\hat{\varphi}^2 d\mu_{h+g_{_{\Bbb E}}}}.$$ Such a notation for the Rayleigh quotient shall be used from now on.

Take a sufficiently large open $m$-disk $D\subset \Bbb R^m$ with center at the origin such that $X\cap (\{s\}\times \Bbb R^m)$ is contained in $\{s\}\times D$ for any $s\in N$, and choose any smooth closed Riemannian $m$-manifold $(M,g)$ into which $D$ is embedded isometrically. Thus $X$ is also isometrically embedded in $N\times M$, and we may consider $\hat{\varphi}$ as a smooth function on $N\times M$ by defining it to be 0 outside of $X$. We choose a smooth function $\tilde{\varphi}$ on $N\times M$ which is a generic-fiberwise Morse function and satisfies $$||\tilde{\varphi}-\hat{\varphi}||_{C^2}< \frac{\epsilon'}{10}$$ where the $C^2$-norm is computed using $h+g$.
\begin{Lemma}\label{bobby}
$$|R_{h+g_{_{\Bbb E}}}(\tilde{\varphi}|_W)-R_{h+g_{_{\Bbb E}}}(\hat{\varphi}|_W)|<C_3\epsilon$$ for any open subset $W\subset X$ containing $\{z\in X|\hat{\varphi}(z)<-\epsilon'\}$.
\end{Lemma}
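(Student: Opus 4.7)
The plan is to reduce the statement to a routine continuity-of-Rayleigh-quotient estimate, exactly in the style of Proposition \ref{yam-0}. Since $X$ is a bounded domain in $N\times \Bbb R^m$, its $(n+m)$-dimensional volume is finite, so any $W\subset X$ also has finite volume bounded by $\Vol(X)$. From the hypothesis $||\tilde{\varphi}-\hat{\varphi}||_{C^2}<\epsilon'/10<\epsilon/10$ (measured with respect to $h+g$, hence equivalent to the $h+g_{_{\Bbb E}}$-norms on the precompact set $\overline{X}$ up to a uniform constant), I would first extract the pointwise bound $|\tilde{\varphi}-\hat{\varphi}|+|d\tilde{\varphi}-d\hat{\varphi}|_{h+g_{_{\Bbb E}}}\leq C\epsilon$ on $\overline{W}$ and then integrate to obtain
$$||\tilde{\varphi}|_W-\hat{\varphi}|_W||_{L_1^2(W)}\leq C''\epsilon.$$

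Next I would show that $\hat{\varphi}|_W$ lies in a fixed ``admissible'' family $\mathfrak{A}$ of the kind defined in Proposition \ref{yam-0}. The $L^2$-norm is bounded from below by $\int_W\hat{\varphi}^2\,d\mu_{h+g_{_{\Bbb E}}}>1-(1+C')\epsilon>\tfrac{1}{2}$ by (\ref{simon2}), and the $L_1^2$-norm is bounded above by (\ref{SujinLee1}) together with the identity $||\hat{\varphi}|_W||_{L_1^2(W)}^2\leq ||\hat{\varphi}||_{L_1^2(X)}^2$. With these uniform bounds in hand, I can invoke Proposition \ref{yam-0} (applied to $W$ treated as a Riemannian manifold; the proof there only uses Lemma \ref{Sunukjian} and Hölder's inequality, which are insensitive to whether the ambient space is closed) to conclude
$$|R_{h+g_{_{\Bbb E}}}(\hat{\varphi}|_W)-R_{h+g_{_{\Bbb E}}}(\tilde{\varphi}|_W)|\leq \check{C}\,||\tilde{\varphi}|_W-\hat{\varphi}|_W||_{L_1^2(W)}\leq C_3\epsilon.$$

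If I preferred to avoid quoting Proposition \ref{yam-0} directly and just repeat the argument, the core estimate would be to apply Lemma \ref{Zenith} to bound both
$\bigl|\int_W \tilde{\varphi}^2-\int_W\hat{\varphi}^2\bigr|$ and $\bigl|\int_W|d\tilde{\varphi}|^2-\int_W|d\hat{\varphi}|^2\bigr|$ by $||\tilde{\varphi}|_W-\hat{\varphi}|_W||_{L_1^2(W)}$ times a uniform constant, and then plug into Lemma \ref{Sunukjian} with $a=\int_W\hat{\varphi}^2$ and $b=\int_W|d\hat{\varphi}|^2$; the lower bound $a>\tfrac{1}{2}$ ensures $|x/a|<\tfrac{1}{2}$ is satisfied for $\epsilon$ small.

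There is no real obstacle here: the lemma is a bookkeeping statement asserting that the Rayleigh quotient on the fixed open set $W$ depends continuously on the function in $L_1^2$, and everything needed has already been set up. The one point that requires mild attention is that the $C^2$-closeness is stated in the metric $h+g$ (the ambient metric on $N\times M$), so I must note once that since $W\subset X\subset N\times D$ with $D$ a bounded Euclidean disk isometrically embedded in $(M,g)$, the metrics $h+g$ and $h+g_{_{\Bbb E}}$ agree on $W$ and no conversion is needed.
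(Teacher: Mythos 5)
Your proposal is correct and follows essentially the same route as the paper: extract the $L_1^2$-closeness on $W$ from the $C^1$-closeness on a set of volume at most $\mu(X)$, bound the relevant differences of squared norms via Lemma \ref{Zenith}, secure the uniform lower bound on $\int_W\hat\varphi^2$ from (\ref{simon2}) and the uniform upper bound on $\|\hat\varphi\|_{L_1^2}$ from (\ref{SujinLee1}), and close with Lemma \ref{Sunukjian}. The only cosmetic difference is that you phrase the final step as an invocation of Proposition \ref{yam-0} (correctly noting its proof carries over to the domain $W$), whereas the paper just redoes the short computation in place; your parenthetical remark that the $C^2$-norm is taken w.r.t. $h+g$ but that $h+g$ and $h+g_{_{\Bbb E}}$ agree on $X$ is a correct clarification that the paper leaves implicit.
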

\begin{proof}
It's just a trivial check following from the $C^1$-closeness of two functions on a set of finite volume $\leq \mu(X)$.
From $\sup|\tilde{\varphi}-\hat{\varphi}|< \frac{\epsilon'}{10}$ and $\sup|d\tilde{\varphi}-d\hat{\varphi}|< \frac{\epsilon'}{10}$,
it follows that
$$||\tilde{\varphi}-\hat{\varphi}||_{L_1^2(X)}< \frac{\epsilon'}{10}\sqrt{2\mu(X)}.$$
By using Lemma \ref{Zenith} and (\ref{SujinLee1})
\begin{eqnarray*}
|\int_W (|d\tilde{\varphi}|^2-|d\hat{\varphi}|^2)\   d\mu_{h+g_{_{\Bbb E}}}| &\leq& ||\tilde{\varphi}-\hat{\varphi}||_{L_1^2(W)}(||\tilde{\varphi}||_{L_1^2(W)}+||\hat{\varphi}||_{L_1^2(W)})\\
&\leq& ||\tilde{\varphi}-\hat{\varphi}||_{L_1^2(X)}(2||\hat{\varphi}||_{L_1^2(X)}+||\tilde{\varphi}-\hat{\varphi}||_{L_1^2(X)})\\
&<& \frac{\epsilon'}{10}\sqrt{2\mu(X)}\left(2(\sqrt{2+\lambda_1(X)}+1)+\frac{\epsilon'}{10}\sqrt{2\mu(X)}\right)\\ &\leq& C''\epsilon',
\end{eqnarray*}
and
\begin{eqnarray}\label{simon1}
|\int_W (\tilde{\varphi}^2-\hat{\varphi}^2)\   d\mu_{h+g_{_{\Bbb E}}}| &\leq& (||\tilde{\varphi}-\hat{\varphi}||_{L_1^2(W)})(||\tilde{\varphi}||_{L_1^2(W)}+||\hat{\varphi}||_{L_1^2(W)})\nonumber\\
&<& C''\epsilon'.
\end{eqnarray}

Combining these with (\ref{simon2}) and Lemma \ref{Sunukjian} produces the desired inequality.
\end{proof}

Now we choose $W$ to be $\{z\in N\times M|\tilde{\varphi}(z)<-\frac{\epsilon'}{2}\}$ so that
$$\{z\in X|\hat{\varphi}(z)<-\epsilon'\}\subset W\subset X.$$
By Theorem \ref{Young&Hyuk} the fiberwise Euclidean arrangement $\tilde{\varphi}_{\bar{\star}}$ of $\tilde{\varphi}$ on $N\times M$ satisfies
$$\tilde{\varphi}_{\bar{\star}}\in L_1^2(N\times D^m_V)\cap C^0(N\times D^m_V)$$ where $V$ is the volume of $(M,g)$ and for
$W^\star:=\{z\in N\times D^m_V|\tilde{\varphi}_{\bar{\star}}(z)<-\frac{\epsilon'}{2}\}$
\begin{eqnarray*}\label{vvvv}
\int_W|d\tilde{\varphi}|^2 d\mu_{h+g_{_{\Bbb E}}}\geq \int_{W^\star}|d\tilde{\varphi}_{\bar{\star}}|^2 d\mu_{h+g_{_{\Bbb E}}},
\end{eqnarray*}
which implies that $$R_{h+g_{_{\Bbb E}}}(\tilde{\varphi}|_W)\geq R_{h+g_{_{\Bbb E}}}(\tilde{\varphi}_{\bar{\star}}|_{W^\star}).$$

Finally we define $\psi:X^\star\rightarrow \Bbb R$ by
$$\psi(y)=\left\{
  \begin{array}{ll}
    \tilde{\varphi}_{\bar{\star}}(y)+\frac{\epsilon'}{2} & \hbox{for $y\in W^\star$} \\
    0 & \hbox{for $y\in X^\star-W^\star$.}
  \end{array}
\right.$$
Since $\min(f(x),g(x))$ for continuous real-valued functions $f,g$ defined on any topological space is continuous, $\psi\in C^0(X^\star)$ and also $\psi\in L_1^2(X^\star)$ by Lemma \ref{Zenith}. Moreover
\begin{Lemma}
$\psi\in L_{0,1}^2(X^\star)$.
\end{Lemma}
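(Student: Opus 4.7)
The plan is to show that $\psi$ is a Lipschitz continuous function on the compact set $N\times\overline{D^m_V}$ whose support is compactly contained in the open set $X^\star$ with strictly positive distance from $\partial X^\star$, and then approximate $\psi$ in $L_1^2$-norm by $C_c^\infty(X^\star)$ functions via standard mollification; this will give $\psi\in L_{0,1}^2(X^\star)$.

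First, observe that $\psi=\min(\tilde{\varphi}_{\bar{\star}}+\tfrac{\epsilon'}{2},0)$. By Theorem~\ref{Young&Hyuk}, $\tilde{\varphi}_{\bar{\star}}$ extends to a Lipschitz continuous function on $N\times\overline{D^m_V}$, so Lemma~\ref{Zenith} together with truncation at $0$ shows that $\psi$ is Lipschitz continuous on $N\times\overline{D^m_V}$ and vanishes identically on the open set $\{\tilde{\varphi}_{\bar{\star}}>-\epsilon'/2\}$.

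The main step is to show that $\mathrm{supp}(\psi)$ is compactly contained in $X^\star$ at strictly positive distance from $\partial X^\star$. Since $\hat{\varphi}\in C_c^\infty(X)$, its support $K$ is a compact subset of the interior of $X$, and from $\|\tilde{\varphi}-\hat{\varphi}\|_{C^0}<\epsilon'/10$ one deduces $\{\tilde{\varphi}\le-\epsilon'/2\}\subset K$. For each $s\in N$, writing $X_s:=X\cap(\{s\}\times\Bbb R^m)$ and $K_s:=K\cap(\{s\}\times\Bbb R^m)$, the function $\hat{\varphi}$ vanishes on a nonempty relatively open neighborhood of $\partial X_s$ inside $X_s$, so $\mu(K_s)<V(s)$ strictly. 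By the equimeasurability built into fiberwise Euclidean rearrangement and the radial symmetry of $\tilde{\varphi}_{\bar{\star}}(s,\cdot)$, the fiber sub-level set $\{y\in\overline{D^m_V}:\tilde{\varphi}_{\bar{\star}}(s,y)\le-\epsilon'/2\}$ is a closed Euclidean ball of some radius $r(s)$ satisfying
$$\mu(\{y:\tilde{\varphi}_{\bar{\star}}(s,y)\le-\epsilon'/2\})=\mu(\{\tilde{\varphi}(s,\cdot)\le-\epsilon'/2\})\le\mu(K_s)<V(s),$$
hence $r(s)<R(s)$ where $R(s)$ is the radius of $D^m_{V(s)}$. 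Now $\mathrm{supp}(\psi)$ is a closed subset of the compact set $N\times\overline{D^m_V}$, and the continuous function $(s,q)\mapsto R(s)-|q|$ (continuous because $V$, and hence $R$, is continuous on the compact manifold $N$) is strictly positive on $\mathrm{supp}(\psi)$, so it attains a strictly positive minimum $\rho_0$ there. This $\rho_0$ is the desired positive Euclidean gap between $\mathrm{supp}(\psi)$ and $\partial X^\star$.

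Finally, cover $\mathrm{supp}(\psi)$ by finitely many local coordinate charts $U_\alpha\subset X^\star$ with compact closures, take a smooth partition of unity subordinate to $\{U_\alpha\}$, and convolve each localized piece of $\psi$ with a standard mollifier at scale smaller than $\rho_0$. The resulting smooth functions lie in $C_c^\infty(X^\star)$ and converge to $\psi$ in the $L_1^2$-norm by standard mollification estimates for Lipschitz functions, which proves $\psi\in L_{0,1}^2(X^\star)$. The main obstacle is Step~2, in particular the strict fiberwise inequality $\mu(K_s)<V(s)$ and its uniform-in-$s$ consequence; once $R$ is known continuous, compactness of $N$ together with compactness of $\mathrm{supp}(\psi)$ upgrades the pointwise strict positivity of $R(s)-|q|$ to a uniform positive gap, so the mollification step goes through without further effort.
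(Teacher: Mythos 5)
There is a genuine gap in the compactness step. You establish the pointwise strict inequality $\mu(K_s) < V(s)$ for each $s\in N$, use equimeasurability of the fiberwise rearrangement to get $r(s)<R(s)$, and then claim the continuous function $(s,q)\mapsto R(s)-|q|$ is strictly positive on $\mathrm{supp}(\psi)$ and hence has a positive minimum there. But the equimeasurability identity is only directly available over $s\in N_0(\tilde{\varphi})$ where $\tilde{\varphi}(s,\cdot)$ is Morse; over $s\in N\setminus N_0$ the function $\tilde{\varphi}_{\bar\star}(s,\cdot)$ is defined merely as a Lipschitz extension, and the strict inequality $r(s)<R(s)$ is not established there. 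So you only know $R(s)-|q|>0$ on the dense subset $\mathrm{supp}(\psi)\cap(N_0\times\overline{D^m_V})$, and strict positivity of a continuous function on a dense subset of a compact set only yields nonnegativity on the whole set, not a positive minimum. Concretely, $r(s_n)$ could approach $R(s_0)$ along a sequence $s_n\in N_0$ converging to some $s_0\notin N_0$, in which case $\mathrm{supp}(\psi)$ would touch $\partial X^\star$ and your $\rho_0$ would be $0$.

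The paper's proof avoids this by a uniform device: it picks an open $Y\subset X$ containing $\mathrm{supp}(\hat{\varphi})$ with $v(s):=\mu(Y\cap(\{s\}\times\Bbb R^m)) < V(s)-\delta$ for a constant $\delta>0$, shows $\{z\in N_0\times D^m_V : \tilde{\varphi}_{\bar\star}(z)<-\epsilon'/2\}\subset Y^\star$ and hence $\mathrm{supp}(\psi)\subset\overline{Y^\star}$, and the uniform $\delta$-margin then gives $\overline{Y^\star}\subset X^\star$ outright. To repair your argument along the lines you propose, you would need the uniform estimate $V(s)-\mu(K_s)\ge\delta>0$ (obtainable, e.g., by observing that $s\mapsto\mu(K_s)$ is upper semicontinuous since $K$ is compact, so $V(s)-\mu(K_s)$ is lower semicontinuous and positive on compact $N$), and then propagate it through the rearrangement uniformly over all $s$. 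Your preliminary steps (Lipschitz regularity of $\psi$, $\{\tilde\varphi\le -\epsilon'/2\}\subset K$, and the final mollification once compact support is known) are fine and match the structure of the paper's argument.
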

\begin{proof}
It suffices to show that the compact set $\textrm{supp}(\psi)$ is a subset of the open set $X^\star$.
Since $\textrm{supp}(\hat{\varphi})$ is a compact subset of the open set $X$, we can choose an open subset $Y\subset X$ such that $Y$ contains $\textrm{supp}(\hat{\varphi})$ and the $m$-dimensional volume $$v(s):=\mu(Y\cap (\{s\}\times \Bbb R^m))$$ is
less than $V(s)-\delta$ for a constant $\delta\in (0,\min_{s\in N}V(s))$. Let's  define $$Y^\star:=\{(s,q)\in N\times \Bbb R^m|q\in D^m_{v(s)}\}.$$
For any $s\in N_0=N_0(\tilde{\varphi})$
\begin{eqnarray*}
\mu(\{z\in \{s\}\times D^m_V|\tilde{\varphi}_{\bar{\star}}(z)< -\frac{\epsilon'}{2}\}) &=&
\mu(\{z\in \{s\}\times M|\tilde{\varphi}(z)< -\frac{\epsilon'}{2}\}) \\ &<&
\mu(Y\cap (\{s\}\times M))\\ &=& \mu(Y^\star\cap (\{s\}\times D^m_V))
\end{eqnarray*}
where the inequality of the 2nd line is obtained by the fact that  $$|\tilde{\varphi}|< \frac{\epsilon'}{10} \ \ \ \ \textrm{on}\ N\times M-Y.$$
Therefore $$\{z\in N_0\times D^m_V|\tilde{\varphi}_{\bar{\star}}(z)< -\frac{\epsilon'}{2}\}\subset Y^\star,$$ and hence
\begin{eqnarray*}
\textrm{supp}(\psi)&=& \overline{\{z\in N\times D^m_V|\tilde{\varphi}_{\bar{\star}}(z)< -\frac{\epsilon'}{2}\}} \\
&=& \overline{\{z\in N_0\times D^m_V|\tilde{\varphi}_{\bar{\star}}(z)< -\frac{\epsilon'}{2}\}} \\
&\subset& \overline{Y^\star} \\
&\subset& \{(s,q)\in N\times \Bbb R^m|q\in \overline{D^m_{V(s)-\delta}}\} \\
&\subset& X^\star
\end{eqnarray*}
where the 2nd equality is due to the continuity of $\tilde{\varphi}_{\bar{\star}}$.
\end{proof}

\begin{Lemma}
$$|R_{h+g_{_{\Bbb E}}}(\psi)- R_{h+g_{_{\Bbb E}}}(\tilde{\varphi}_{\bar{\star}}|_{W^\star})|<C_4\epsilon.$$
\end{Lemma}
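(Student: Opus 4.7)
The plan is to exploit two simple observations: (i) because $\psi$ is obtained from $\tilde{\varphi}_{\bar{\star}}$ by adding the constant $\epsilon'/2$ on $W^\star$ and extending by zero outside, its weak derivative vanishes off $W^\star$ and agrees with $d\tilde{\varphi}_{\bar{\star}}$ on $W^\star$, so the \emph{numerators} of the two Rayleigh quotients are literally equal; and (ii) the denominators differ only by a lower-order term of size $O(\epsilon')$, after which Lemma \ref{Sunukjian} delivers the claimed bound.

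For step (i), I would first check that $\psi \in L_1^2(X^\star)$ with $d\psi = \chi_{_{W^\star}} d\tilde{\varphi}_{\bar{\star}}$. Since $\psi = (\tilde{\varphi}_{\bar{\star}} + \epsilon'/2)^-$ up to sign, this follows by applying Lemma \ref{Zenith} to $\tilde{\varphi}_{\bar{\star}} + \epsilon'/2 \in L_1^2(N\times D^m_V)$, using continuity of $\tilde{\varphi}_{\bar{\star}}$ to identify the zero set with $X^\star \setminus W^\star$. Consequently
\begin{equation*}
\int_{X^\star} |d\psi|^2\, d\mu_{h+g_{_{\Bbb E}}} = \int_{W^\star} |d\tilde{\varphi}_{\bar{\star}}|^2\, d\mu_{h+g_{_{\Bbb E}}}.
\end{equation*}

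For step (ii), expand
\begin{equation*}
\int_{X^\star}\psi^2\, d\mu_{h+g_{_{\Bbb E}}} - \int_{W^\star}\tilde{\varphi}_{\bar{\star}}^{\,2}\, d\mu_{h+g_{_{\Bbb E}}} = \int_{W^\star}\!\Big(\epsilon' \tilde{\varphi}_{\bar{\star}} + \tfrac{(\epsilon')^2}{4}\Big)\, d\mu_{h+g_{_{\Bbb E}}},
\end{equation*}
and bound the right-hand side in absolute value using H\"older and $\mu(W^\star)\le \mu(X^\star)<\infty$. To apply Lemma \ref{Sunukjian} I need uniform control of (a) the denominator $\int_{W^\star}\tilde{\varphi}_{\bar{\star}}^{\,2}$ from below away from $0$, and (b) the ratio $\int_{W^\star}|d\tilde{\varphi}_{\bar{\star}}|^2 / \int_{W^\star}\tilde{\varphi}_{\bar{\star}}^{\,2}$ from above. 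For (a), equimeasurability of level sets of $\tilde{\varphi}$ and $\tilde{\varphi}_{\bar{\star}}$ (the Euclidean analog of (\ref{libraryinNY})) gives $\int_{W^\star}\tilde{\varphi}_{\bar{\star}}^{\,2} = \int_W \tilde{\varphi}^2$, and chaining the earlier estimates (\ref{plead}), (\ref{Kimdaewhan}), (\ref{simon2}), (\ref{simon1}) shows this is within $O(\epsilon)$ of $1$; for (b), combine Theorem \ref{Young&Hyuk} with the $C^1$-closeness of $\tilde{\varphi}$ to $\hat{\varphi}$ and the bound (\ref{SujinLee1}) to get a uniform upper bound for the numerator.

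With both bounds in hand, Lemma \ref{Sunukjian} applied with $a = \int_{W^\star}\tilde{\varphi}_{\bar{\star}}^{\,2}$, $b = \int_{W^\star}|d\tilde{\varphi}_{\bar{\star}}|^2$, $x = O(\epsilon')$, and $y = 0$ yields $|R(\psi) - R(\tilde{\varphi}_{\bar{\star}}|_{W^\star})| < C_4\epsilon$ since $\epsilon' < \epsilon$. The only subtlety I anticipate is ensuring that the implicit constants in the uniform bounds for (a) and (b) really are independent of $\epsilon$; this is handled by the fact that every previous estimate in the chain depended only on $\lambda_1(X)$, $\mu(X)$, and $V$, all of which are fixed by the problem.
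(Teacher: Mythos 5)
Your proposal is correct and follows essentially the same route as the paper: identify that $R_{h+g_{_{\Bbb E}}}(\psi)=R_{h+g_{_{\Bbb E}}}((\tilde{\varphi}_{\bar{\star}}+\tfrac{\epsilon'}{2})|_{W^\star})$ so the numerators coincide, bound the $O(\epsilon')$ shift in the denominator, verify via (\ref{simon2}), (\ref{simon1}) and Theorem \ref{Young&Hyuk} that the denominator stays near $1$ and the ratio $b/a$ stays bounded, and invoke Lemma \ref{Sunukjian}. The only cosmetic difference is that you bound the denominator shift by expanding the square and applying H\"older, whereas the paper uses the Minkowski inequality and factors $|a^2-b^2|=|a+b|\,|a-b|$; both are equivalent in effort and yield the same $O(\epsilon')$ estimate.
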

\begin{proof}
Since $$R_{h+g_{_{\Bbb E}}}((\tilde{\varphi}_{\bar{\star}}+\frac{\epsilon'}{2})|_{W^\star})=R_{h+g_{_{\Bbb E}}}(\psi),$$
we shall show $$|R_{h+g_{_{\Bbb E}}}((\tilde{\varphi}_{\bar{\star}}+\frac{\epsilon'}{2})|_{W^\star})- R_{h+g_{_{\Bbb E}}}(\tilde{\varphi}_{\bar{\star}}|_{W^\star})|<C_4\epsilon.$$
This is also the consequence of the fact that two functions are $C^1$-close on a set of finite volume $\leq \mu(X^\star)=\mu(X)$ as in Lemma \ref{bobby}.
By using (\ref{simon2}) and (\ref{simon1})
\begin{eqnarray*}
\int_{W^\star}(\tilde{\varphi}_{\bar{\star}})^2d\mu_{h+g_{_{\Bbb E}}} &=& \int_{W}\tilde{\varphi}^2d\mu_{h+g_{_{\Bbb E}}}\\ &\in& (1-(1+C'+C'')\epsilon, 1+(1+C'+C'')\epsilon).
\end{eqnarray*}
By the Minkowski inequality
\begin{eqnarray*}
|(\int_{W^\star}(\tilde{\varphi}_{\bar{\star}}+\frac{\epsilon'}{2})^2d\mu_{h+g_{_{\Bbb E}}})^{\frac{1}{2}}-(\int_{W^\star}(\tilde{\varphi}_{\bar{\star}})^2d\mu_{h+g_{_{\Bbb E}}})^{\frac{1}{2}}| &\leq& (\int_{W^\star}(\frac{\epsilon'}{2})^2d\mu_{h+g_{_{\Bbb E}}})^{\frac{1}{2}}.
\end{eqnarray*}
Using $|a^2-b^2|=|a+b| |a-b|$ and the above two estimates, we get
$$|\int_{W^\star}(\tilde{\varphi}_{\bar{\star}}+\frac{\epsilon'}{2})^2d\mu_{h+g_{_{\Bbb E}}}-\int_{W^\star}(\tilde{\varphi}_{\bar{\star}})^2d\mu_{h+g_{_{\Bbb E}}}|
< C'''\epsilon'.$$
Now the desired inequality is obtained by invoking Lemma \ref{Sunukjian}.
\end{proof}

Combining all we have obtained so far, we have
\begin{eqnarray*}
\lambda_1(X)&\geq& R_{h+g_{_{\Bbb E}}}(\varphi)-\epsilon
\\ &\geq& R_{h+g_{_{\Bbb E}}}(|\varphi|)-\epsilon
\\ &\geq& R_{h+g_{_{\Bbb E}}}(\hat{\varphi})-(1+C_1)\epsilon
\\ &\geq& R_{h+g_{_{\Bbb E}}}(\hat{\varphi}|_W)-(1+C_1+C_2)\epsilon
\\ &\geq& R_{h+g_{_{\Bbb E}}}(\tilde{\varphi}|_W)-(1+C_1+C_2+C_3)\epsilon
\\ &\geq& R_{h+g_{_{\Bbb E}}}(\tilde{\varphi}_{\bar{\star}}|_{W^\star})-(1+C_1+C_2+C_3)\epsilon
\\ &\geq&
R_{h+g_{_{\Bbb E}}}(\psi)-(1+C_1+C_2+C_3+C_4)\epsilon
\\ &\geq& \lambda_1(X^\star)-(1+C_1+C_2+C_3+C_4)\epsilon.
\end{eqnarray*}
Since $\epsilon>0$ is arbitrary, $\lambda_1(X)\geq \lambda_1(X^\star)$ must hold.

\begin{Remark}
The analogous statement for the hyperbolic symmetrization can be proved in the same way as above. Namely when $X$ is in $(N\times \Bbb H^m,h+g_{_{\Bbb H}})$ and $X^\bullet$ is defined as $$X^\bullet:=\{(s,q)\in N\times \Bbb H^m|q\in \mathfrak{D}^m_{V(s)}\}$$ where $\mathfrak{D}^m_{V(s)}$ is the open ball in $(\Bbb H^m,g_{_{\Bbb H}})$ with center at the origin and volume equal to $V(s)=\mu(X\cap(\{s\}\times \Bbb H^m))$, we have
$$\lambda_1(X)\geq \lambda_1(X^\bullet).$$
\end{Remark}

\subsection{Proof of Theorem \ref{Kane}}

The proof is almost the same as the previous subsection except we use $S^m_V$ instead of $D^m_V$. We adopt the same notation, and explain where to change in the above proof.

First we need to define an auxiliary space $$X^\dagger:=\{(s,q)\in N\times S^m_V|r(q)< \tilde{R}(s)\}$$
with metric induced from the Riemannian product $(N,\textbf{h}_V:=(\frac{V}{V_m})^{\frac{2}{m}}h)\times S^m_V$ where $r(q)$ is the distance of $q$ and the south pole and
$\tilde{R}(s)$ is defined by the condition that the volume of a geodesic ball of radius $\tilde{R}(s)$ in $S^m_V$ is $V(s)$.
It is just the conformal rescaling of $X^*$; namely, a manifold
$X^\dagger$ with another metric $\left(\frac{V}{V_m}\right)^{-\frac{2}{m}}(\textbf{h}_V+\textsl{g}_V)$ is equal to $(X^*, h+g_{_{\Bbb S}})$.

Just as $W^\star$ in the previous subsection we define $$W^*:=\{z\in N\times S^m_V|\tilde{\varphi}_{\bar{*}}(z)<-\frac{\epsilon'}{2}\}$$ contained in $X^\dagger$.
By using the inequalities in Theorem \ref{main-estimates} along with the following two obvious inequalities
\begin{eqnarray*}
\left( \frac{V_m}{V} \right)^{\frac{n}{m}}\int_{N}f\ d\mu_{\textbf{h}_V}=\int_Nf\ d\mu_h, \ \ \ \ \ \  \ \ \ \left(\frac{V}{V_m}\right)^{\frac{2}{m}}| df|_{\textbf{h}_V}^2=| df|_{h}^2
\end{eqnarray*}
for any smooth $f:N\rightarrow \Bbb R$, we get
\begin{eqnarray*}
\int_W|d\tilde{\varphi}|_{h+g}^2 d\mu_{h+g}\geq \left(\frac{V_m}{V}\right)^{\frac{n}{m}}\left(\frac{V}{V_m}\right)^{\frac{2}{m}}\int_{W^*}|d\tilde{\varphi}_{\bar{*}}|^2_{\textbf{h}_V+\textsl{g}_V} d\mu_{\textbf{h}_V+\textsl{g}_V}.
\end{eqnarray*}
Combining it with
$$\int_W\tilde{\varphi}^2 d\mu_{h+g}= \left(\frac{V_m}{V}\right)^{\frac{n}{m}}\int_{W^*}(\tilde{\varphi}_{\bar{*}})^2 d\mu_{\textbf{h}_V+\textsl{g}_V},$$
gives $$R_{h+g}(\tilde{\varphi}|_W)\geq \left(\frac{V}{V_m}\right)^{\frac{2}{m}}R_{\textbf{h}_V+\textsl{g}_V}(\tilde{\varphi}_{\bar{*}}|_{W^\star}).$$
Everything else is the same, and hence one should get
\begin{eqnarray*}
\lambda_1(X)&\geq& R_{h+g}(\varphi)-\epsilon
\\ &\geq& R_{h+g}(\tilde{\varphi}|_W)-(1+C_1+C_2+C_3)\epsilon
\\ &\geq& \left(\frac{V}{V_m}\right)^{\frac{2}{m}}R_{\textbf{h}_V+\textsl{g}_V}(\tilde{\varphi}_{\bar{*}}|_{W^*})-(1+C_1+C_2+C_3)\epsilon
\\ &\geq&
\left(\frac{V}{V_m}\right)^{\frac{2}{m}}R_{\textbf{h}_V+\textsl{g}_V}(\psi)-(1+C_1+C_2+C_3+C_4)\epsilon
\\ &\geq& \left(\frac{V}{V_m}\right)^{\frac{2}{m}}\lambda_1(X^\dagger)-(1+C_1+C_2+C_3+C_4)\epsilon,
\end{eqnarray*}
and hence
\begin{eqnarray*}
\lambda_1(X)&\geq& \left(\frac{V}{V_m}\right)^{\frac{2}{m}}\lambda_1(X^\dagger)\\ &=& \lambda_1(X^*).
\end{eqnarray*}
This completes the proof.

\subsection{Examples}\label{selfish-man}

We now provide some examples for Theorem \ref{selfish}.



Our first example is a circular tube with varying width.
Let $\gamma_i(s)=(s,y_i(s),0)$ for $s\in [-2,2]$ and $i=1,2$ be a smooth curve in the $xy$-plane such that $y_i(s)=c_i$ near $s=-2,2$ for a constant $c_i$ and satisfy $0< y_1(s)<y_2(s)$ for all $s$. Let $\Sigma_i\subset \Bbb R^3$ for $i=1,2$ be a surface of revolution obtained by rotating $\gamma_i$ around the $x$-axis.

Consider an isometric action of $\Bbb Z$ on $\Bbb R^3$ generated by the translation by 4 along the $x$-axis, i.e. $(x,y,z)\mapsto (x+4,y,z)$, and its quotient space $\Bbb R^3/\Bbb Z$ with the induced metric is isomeric to $S^1_4\times \Bbb R^2$ where $S^1_4$ is the circle of length 4.

Let $\tilde{\Sigma}_i$ be the image of $\Sigma_i$ under this quotient map, and $X_{\Sigma_i}$ be the bounded domain in $S^1_4\times \Bbb R^2$ enclosed by $\tilde{\Sigma}_i$. Now we define $X$ to be $X_{\Sigma_2}-\overline{X_{\Sigma_1}}$ which is diffeomorphic to the product of a circle and an annulus.

In this case $V(s)=\pi (y_2(s)^2-y_1(s)^2)$ is smooth for all $s$, so $X^\star$ is diffeomorphic to the interior of a solid torus with smooth boundary. It is interesting and practical to have $\lambda_1(X)\geq \lambda_1(X^\star)$.

To obtain an example where $X$ is not a fiber bundle, we define $X$ to be $X_{\Sigma_2}$ minus a finite number of solid balls. So $X$ is like a doughnut made of Swiss cheese. More generally, the deleted parts need not be balls. They can be any compact 3-manifolds with boundary, which is embeddable in $\Bbb R^3$.  The cross-sectional area $V(s)$ at $s$ is smooth except for finite number of $s$, since it may become non-smooth at the points where the topology of a cross section changes. In this case of $X$, $X^\star$ is diffeomorphic to the interior of a solid torus with piecewise-smooth boundary.

\section{Open problems}

We finish this paper by discussing some open problems related to our results on $\lambda_i$.  First a progress in the isoperimetric problem can bring a substantial extension of our results. Let $\kappa$ be a nonpositive constant and $M_\kappa^m$ be the $m$-dimensional simply-connected space form of sectional curvature $\kappa$.

A well-known conjecture\footnote{It is also called the generalized Cartan–Hadamard conjecture, because it is about isoperimetric inequalities on Caratan-Hadamard manifolds.} of Aubin \cite{Aubin-0} states that if $(M^m,g)$ is a smooth complete simply-connected Riemannian $m$-manifold with sectional curvature $\leq \kappa$, then the ``boundary area" of a bounded domain with smooth boundary in $M$ is not less than that of a geodesic ball with the same volume in $M_{\kappa}^m$. This conjecture for $\kappa=0$ in lower dimensions $m\leq 4$ was settled by the respective work of Weil, Kleiner, and Croke \cite{Weil, Kleiner, croke2}, and the case of $\kappa=-1$ for $m\leq 3$ was resolved by Bol \cite{Bol} and Kleiner \cite{Kleiner} respectively. There is a positive expectation for higher dimensions too in view of the positive result for a domain with sufficiently small volume, obtained by F. Morgan and D. Johnson \cite{morgan-johnson}.

When the $\kappa=0$ case of Aubin's conjecture is true, one can immediately generalize Theorem \ref{Young&Hyuk} and hence Theorem \ref{selfish} too.
Namely Theorem \ref{Young&Hyuk} would be extended to the case when $\mathfrak{B}$ is isometric to a bounded domain in a smooth complete simply-connected Riemannian $m$-manifold with nonpositive sectional curvature. One can check the proof in \cite{Sung} and see that the isoperimetric condition is only needed for the inequality of Theorem \ref{Young&Hyuk} to hold in such a general manifold. Once one has the inequality
\begin{eqnarray*}
\int_{F^{-1}_{\bar{\star}}(a,b]} |dF_{\bar{\star}}|^2\ d\mu_{h+g_{_{\Bbb E}}}
\leq
\int_{F^{-1}(a,b]} |dF|^2\ d\mu_{h+g},
\end{eqnarray*}
the proof of Theorem \ref{selfish} can be transferred to this general case without any essential change.
Thus we conjecture :
\begin{Conjecture}
Let $(N^n,h)$ and $(M^m,g)$ be smooth complete Riemannian manifolds such that $N$ is closed and $M$ is simply-connected with sectional curvature $K_g\leq 0$.
Suppose that $X$ is a bounded domain with piecewise-smooth boundary in the Riemannian product $N\times M$ such that
the $m$-dimensional volume $V(s)$ of $X\cap (\{s\}\times M)$ for each $s\in N$ is a piecewise-smooth continuous positive function of $s\in N$.
Then for $$X^\star:=\{(s,q)\in N\times \Bbb R^m|q\in D^m_{V(s)}\}$$
the first Dirichlet eigenvalue satisfies $$\lambda_1(X)\geq \lambda_1(X^\star).$$
\end{Conjecture}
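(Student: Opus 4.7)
The plan is to follow the two-step roadmap indicated in the paragraph immediately preceding the conjecture: (i) use the $\kappa=0$ case of Aubin's generalized Cartan--Hadamard conjecture to extend Theorem \ref{Young&Hyuk} to the setting in which the ambient fiber $\mathfrak{B}$ lies in an arbitrary simply-connected manifold of nonpositive sectional curvature, and (ii) re-run the proof of Theorem \ref{selfish} verbatim with this enhanced rearrangement inequality substituted for the original one.

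For step (i), the only place where Euclidean geometry of the ambient fiber enters the proof of Theorem \ref{Young&Hyuk} in \cite{Sung} is through the Euclidean isoperimetric inequality, used to estimate the ``horizontal'' gradient contribution after passing to level sets of $F$ restricted to each fiber. Aubin's conjecture at $\kappa=0$ asserts precisely that any bounded domain with smooth boundary in a complete simply-connected manifold with $K\leq 0$ has boundary area at least that of a Euclidean ball of equal volume. Inserting this inequality at that single offending step, together with the ``vertical'' pointwise P\'olya--Szeg\H{o}-type bound which is already geometry-free, would yield the extended inequality
$$\int_{F^{-1}_{\bar{\star}}(a,b]}|dF_{\bar{\star}}|^2\,d\mu_{h+g_{_{\Bbb E}}}\leq \int_{F^{-1}(a,b]}|dF|^2\,d\mu_{h+g}$$
now valid whenever $F^{-1}(-\infty,b]\subset N\times\mathfrak{B}$ with $\mathfrak{B}$ a bounded domain in a Cartan--Hadamard manifold.

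For step (ii), starting from a first Dirichlet eigenfunction $\varphi$ that may be taken in $C_c^\infty(X)$ with $\int_X \varphi^2\,d\mu_{h+g}=1$, I would produce successively $|\varphi|$ (Lemma \ref{Zenith}), a smooth compactly supported nonpositive approximation $\hat{\varphi}$ (Lemma \ref{hebrew}), a generic-fiberwise Morse perturbation $\tilde{\varphi}$ obtained via Theorem \ref{morse-1}, its fiberwise Euclidean rearrangement $\tilde{\varphi}_{\bar{\star}}$ on $N\times D^m_V$ for a sufficiently large $V$, and finally a translated cut-off $\psi\in L^2_{0,1}(X^\star)$. This reproduces exactly the cascade $\varphi\Rightarrow |\varphi|\Rightarrow \hat{\varphi}\Rightarrow \tilde{\varphi}|_W\Rightarrow \tilde{\varphi}_{\bar{\star}}|_{W^\star}\Rightarrow \psi$ of the proof of Theorem \ref{selfish}, and the chain of Rayleigh-quotient estimates controlled by Lemmas \ref{Sunukjian} and \ref{Zenith}, together with the extended inequality above, would then give $\lambda_1(X)\geq R_{h+g_{_{\Bbb E}}}(\psi)-C\epsilon\geq \lambda_1(X^\star)-C\epsilon$ for arbitrary $\epsilon>0$, whence $\lambda_1(X)\geq \lambda_1(X^\star)$.

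The main obstacle is external and open: the $\kappa=0$ case of Aubin's conjecture is known only in dimensions $m\leq 4$ by work of Weil, Kleiner, and Croke \cite{Weil,Kleiner,croke2}, and for domains of sufficiently small volume by Morgan and Johnson \cite{morgan-johnson}; outside this range an unconditional proof by the present route is inaccessible. A secondary technicality, which I do not expect to cause real difficulty, is that $M$ is only assumed complete rather than closed, so the symmetrization machinery of Section~2.4 does not apply directly on $N\times M$; this is handled, as in the proof of Theorem \ref{selfish}, by performing the rearrangement inside a sufficiently large bounded region of $M$ containing the support of $\tilde{\varphi}$, after isometrically embedding this region into an auxiliary closed Riemannian $m$-manifold used solely as a host for the rearrangement operation.
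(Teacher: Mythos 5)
Your proposal matches the paper's own outline exactly: the statement is presented as a conjecture conditional on the $\kappa=0$ case of Aubin's isoperimetric conjecture, to be established by first extending Theorem \ref{Young&Hyuk} to bounded domains of Cartan--Hadamard fibers and then replaying the proof of Theorem \ref{selfish} verbatim with the enhanced rearrangement inequality, and you correctly flag the secondary technicality of $M$ being complete rather than closed (handled, as in Theorem \ref{selfish}, by isometrically embedding a sufficiently large bounded region of $M$ into an auxiliary closed host manifold). One minor mislabeling worth noting: in Theorem \ref{main-estimates} and its Euclidean analogue, the step that requires the isoperimetric inequality in the fiber is the \emph{vertical} (fiberwise, $d^M F$) gradient estimate, while the estimate for horizontal directions $v\in T_qN$ is the geometry-free one coming from equimeasurability --- you have these labels reversed, though your understanding of which step needs Aubin's inequality and which is automatic is substantively correct.
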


Likewise if the $\kappa=-1$ case of  Aubin's conjecture is true, then one can generalize the hyperbolic version of Theorem \ref{Young&Hyuk}, thereby implying the following :
\begin{Conjecture}
Let $(N^n,h)$ and $(M^m,g)$ be smooth complete Riemannian manifolds such that $N$ is closed and $M$ is simply-connected with sectional curvature $K_g\leq -1$.
Suppose that $X$ is a bounded domain with piecewise-smooth boundary in the Riemannian product $N\times M$ such that
the $m$-dimensional volume $V(s)$ of $X\cap (\{s\}\times M)$ for each $s\in N$ is a piecewise-smooth continuous positive function of $s\in N$.
Then for $$X^\bullet:=\{(s,q)\in N\times \Bbb H^m|q\in \mathfrak{D}^m_{V(s)}\}$$
the first Dirichlet eigenvalue satisfies $$\lambda_1(X)\geq\lambda_1(X^\bullet).$$
\end{Conjecture}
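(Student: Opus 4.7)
The proof plan mirrors the strategy developed for Theorem \ref{selfish}, with the fiberwise Euclidean rearrangement replaced by its hyperbolic analogue. The essential new input is a strengthening of the hyperbolic version of Theorem \ref{Young&Hyuk} from the setting where the fibers are isometric to bounded domains in $(\Bbb H^m, g_{_{\Bbb H}})$ to the setting where the ambient fiber $(M,g)$ is an arbitrary simply-connected Riemannian manifold with $K_g \leq -1$. Granting this strengthening, the desired inequality $\lambda_1(X) \geq \lambda_1(X^\bullet)$ is obtained by reproducing the eight-step argument of the proof of Theorem \ref{selfish}: start with an almost-minimizing $\varphi \in C_c^\infty(X)$ normalized so that $\int_X\varphi^2\,d\mu_{h+g}=1$, replace it by $-|\varphi|$, mollify to a nonpositive smooth $\hat{\varphi}\in C_c^\infty(X)$ using a partition of unity, extend $\hat{\varphi}$ by zero to all of $N\times M$, perturb to a generic-fiberwise Morse function $\tilde{\varphi}$ via Theorem \ref{morse-1}, take the fiberwise hyperbolic rearrangement $\tilde{\varphi}_{\bar{\bullet}}$ on $N\times \Bbb H^m$, truncate at the level $-\epsilon'/2$, and shift upward by $\epsilon'/2$ to obtain the test function $\psi\in L_{0,1}^2(X^\bullet)$. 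Each step changes the Rayleigh quotient by $O(\epsilon)$, with the single nontrivial decrease occurring in the rearrangement step, where the generalized Theorem \ref{Young&Hyuk} provides
\begin{eqnarray*}
R_{h+g}(\tilde{\varphi}|_W) \;\geq\; R_{h+g_{_{\Bbb H}}}(\tilde{\varphi}_{\bar{\bullet}}|_{W^\bullet}).
\end{eqnarray*}

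The main obstacle, and the reason this is stated as a conjecture, is establishing the generalized Theorem \ref{Young&Hyuk} under the hypothesis $K_g\leq -1$. In the original statement, $\mathfrak{B}\subset(M,g)$ is assumed isometric to a bounded domain of $(\Bbb H^m,g_{_{\Bbb H}})$, so the classical isoperimetric inequality on $\Bbb H^m$ can be applied superlevel-set-by-superlevel-set. In the general Cartan--Hadamard setting with $K_g\leq -1$, the analogous comparison of boundary areas between $\Omega\subset M$ and a geodesic ball $\Omega^\bullet\subset\Bbb H^m$ of the same volume is exactly the $\kappa=-1$ case of Aubin's conjecture. Conditional on that isoperimetric inequality, the coarea-based Polya--Szeg\H{o} argument carried out in \cite{Sung} transfers with no essential change: writing $V_M(t):=\mu(\{x\in M|F(s,x)<t\})=\mu(\{y\in\Bbb H^m|F_{\bar{\bullet}}(s,y)<t\})=:V_{\Bbb H}(t)$, applying the isoperimetric inequality at level $t$, and using the coarea formula fiber by fiber yields
\begin{eqnarray*}
\int_{F^{-1}_{\bar{\bullet}}(a,b]} |dF_{\bar{\bullet}}|^2\ d\mu_{h+g_{_{\Bbb H}}}
\;\leq\;
\int_{F^{-1}(a,b]} |dF|^2\ d\mu_{h+g},
\end{eqnarray*}
which is exactly the inequality feeding the rearrangement step of the Rayleigh quotient comparison.

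Once this generalized Theorem \ref{Young&Hyuk} is in place, the remainder of the argument is a transcription of Subsection on the proof of Theorem \ref{selfish}: the a priori bound $\|\hat{\varphi}\|_{L_1^2}<\sqrt{2+\lambda_1(X)}+1$, the comparison lemmas bounding $|R_{h+g}(\hat{\varphi})-R_{h+g}(-|\varphi|)|$, $|R_{h+g}(\hat{\varphi})-R_{h+g}(\hat{\varphi}|_W)|$, $|R_{h+g}(\tilde{\varphi}|_W)-R_{h+g}(\hat{\varphi}|_W)|$, and $|R_{h+g_{_{\Bbb H}}}(\psi)-R_{h+g_{_{\Bbb H}}}(\tilde{\varphi}_{\bar{\bullet}}|_{W^\bullet})|$ by $C_i\epsilon$, together with the final compactness check that $\mathrm{supp}(\psi)\subset X^\bullet$ (which uses $|\tilde{\varphi}|<\epsilon'/10$ outside an open set $Y\subset X$ whose fiberwise volume is bounded above by $V(s)-\delta$, translated into a strict containment of the rearranged support inside fiberwise balls $\mathfrak{D}^m_{V(s)-\delta}$), all carry over verbatim because none of these steps uses the ambient geometry of $M$ beyond the volumes of superlevel sets. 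Summing the telescoping chain of inequalities yields $\lambda_1(X)\geq \lambda_1(X^\bullet)-C\epsilon$ with a constant $C$ independent of $\epsilon$, and letting $\epsilon\to 0$ concludes the proof.
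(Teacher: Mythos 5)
Your proposal correctly recognizes that the statement is an open conjecture whose validity rests precisely on the $\kappa=-1$ case of Aubin's generalized Cartan--Hadamard conjecture, and that, conditional on that isoperimetric input, the desired inequality follows by transcribing the proof of Theorem \ref{selfish} with fiberwise hyperbolic rearrangement in place of fiberwise Euclidean rearrangement. This is exactly the reduction the paper sketches in the surrounding discussion (it does not and cannot prove the conjecture outright), so your argument matches the paper's own reasoning in both structure and identification of the missing ingredient.
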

So both of our conjectures are already resolved in lower dimensions by combining our fiberwise symmetrization methods and the resolution of Aubin's conjectures in the corresponding dimensions. For a convincing evidence in higher dimensions, one may consider Cheng's another comparison result \cite{Cheng2} stating that for a smooth complete Riemannian $m$-manifold $(M^m,g)$ with sectional curvature $K_g\leq \kappa$ $$\lambda_1(B_r^M)\geq\lambda_1(B_r^{M^m_\kappa})$$ for any geodesic ball $B_r^M$ of radius $r$ embedded in $M$.
For another evidence, Mckean's theorem \cite{mckean} asserts that for a smooth simply-connected complete Riemannian $m$-manifold $(M^m,g)$ with sectional curvature $K_g\leq -1$ $$\lambda_1(M)\geq \lambda_1(\Bbb H^m)=\frac{(m-1)^2}{4}$$ where $\lambda_1$ of a noncompact Riemannian manifold is defined as $\lim_{r\rightarrow \infty} \lambda_1(B_r)$, i.e. the limit of Dirichlet eigenvalues of any large geodesic balls.

Secondly, a natural follow-up study after this paper is to extend comparison results of $\lambda_i$ to a fiber bundle which is not associated to a principal $G$-bundle for a compact Lie group $G$ and to investigate whether higher Dirichlet eigenvalues can be also estimated by using our symmetrization method.

Lastly, on $\Bbb R^m$ there are many results on Faber-Krahn type inequalities of $\lambda_1$ for less symmetric domains of $\Bbb R^m$ such as bounded domains with nontrivial topology.(Refer to \cite{Henrot} and references therein.) In some cases, the most symmetric ones among domains with the same topology and volume turn out to have the maximum or minimum 1st Dirichlet eigenvalues. So it's a natural attempt to extend these results to a corresponding fiber bundle by using the fiberwise symmetrization developed in the present paper or \cite{morgan-H-H} and references therein.

\bigskip

\noindent{\bf Acknowledgement} The author would like to express his deepest gratitude to God, parents, and all the teachers who taught him mathematics.
The bulk of this work was done while on sabbatical, so the author is grateful to his home country for allowing him such a valuable opportunity.

\vspace{0.5cm}

\end{document}